\setlist[enumerate]{leftmargin=.5in}
\setlist[itemize]{leftmargin=.5in}
\newtheorem{thm}{Theorem}[section]
\newtheorem{prop}[thm]{Proposition}
\newtheorem{lem}[thm]{Lemma}
\theoremstyle{remark}
\newtheorem{rem}[thm]{Remark}
\newtheorem{defin}[thm]{Definition}
\newcommand{\ie}{{\it i.e.}}
\newcommand{\eg}{{\it e.g.}}
\newcommand{\etc}{{\it etc.}}
\DeclareFontFamily{U}{tipa}{}
\DeclareFontShape{U}{tipa}{m}{n}{<->tipa10}{}
\newcommand{\arc@char}{{\usefont{U}{tipa}{m}{n}\symbol{62}}}%
\newcommand{\arc}[1]{\mathpalette\arc@arc{#1}}
\newcommand{\arc@arc}[2]{%
  \sbox0{$\m@th#1#2$}%
  \vbox{
    \hbox{\resizebox{\wd0}{\height}{\arc@char}}
    \nointerlineskip
    \box0
  }%
}
\newcommand*{\SectorRadius}{1ex}
\newcommand*{\SectorHalfAngle}{45}
\newcommand*{\SectorLineWidth}{.4pt}
\newcommand*{\sector}{%
  \begin{pgfpicture}
    \pgfpathmoveto{\pgforigin}%
    \pgfpathlineto{\pgfpointpolar{90-(\SectorHalfAngle)}{\SectorRadius}}%
    \pgfarc{90-(\SectorHalfAngle)}{90+\SectorHalfAngle}{\SectorRadius}%
    \pgfpathclose
    \pgfsetlinewidth{\SectorLineWidth}%
    \pgfusepath{stroke}%
  \end{pgfpicture}%
}
\def\R{{\mathbb R}}
\def\N{{\mathbb N}}
\def\E{{\mathbb E}}
\def\D{{\mathbb D}}
\def\e{{\varepsilon}}
\def\co{\textrm{co}}
\def\vol{\textrm{vol}}
\def\supp{\textrm{supp}}
\newcommand{\calA}{\mathcal{A}}
\title{Consistency of archetypal analysis\thanks{Braxton Osting acknowledges partial support from NSF DMS 16-19755 and 17-52202. Dong Wang acknowledges support by the University Development Fund from The Chinese University of Hong Kong, Shenzhen (UDF01001803). Dominique Zosso acknowledges support by a Simons collaboration grant for mathematicians (\#586942).}}
\author{Braxton Osting\thanks{Department of Mathematics, University of Utah, Salt Lake City, UT
  (\email{osting@math.utah.edu}).}
\and Dong Wang\thanks{School of Science and Engineering, The Chinese University of Hong Kong, Shenzhen, Guangdong 518172, China
  (\email{wangdong@cuhk.edu.cn}).}
\and  Yiming Xu\thanks{Department of Mathematics, University of Utah, Salt Lake City, UT
  (\email{yxu@math.utah.edu}.)}
  \and  Dominique Zosso\thanks{Department of Mathematical Sciences, Montana State University, Bozeman, MT
  (\email{dominique.zosso@montana.edu}).}} 
\begin{document}

\maketitle

% REQUIRED
\begin{abstract}
Archetypal analysis is an unsupervised learning method that uses a convex polytope to summarize multivariate data.
For fixed $k$, the method finds a convex polytope with $k$ vertices, called \emph{archetype points}, such that the polytope is contained in the convex hull of the data and the mean squared distance between the data and the polytope is minimal.
In this paper, we prove a consistency result that shows if the data is independently sampled from a probability measure with bounded support, then the archetype points converge to a solution of the continuum version of the problem, of which we identify and establish several properties.
We also obtain the convergence rate of the optimal objective values under appropriate assumptions on the distribution.
If the data is independently sampled from a distribution with unbounded support, we also prove a consistency result for a modified method that penalizes the dispersion of the archetype points.
Our analysis is supported by detailed computational experiments of the archetype points for data sampled from the uniform distribution in a disk, the normal distribution, an annular distribution, and a Gaussian mixture model.
\end{abstract}

% REQUIRED
\begin{keywords}
Archetypal analysis; principal convex hull; consistency; multivariate data summarization; unsupervised learning
\end{keywords}

% REQUIRED
\begin{AMS}
62H12, % Estimation in multivariate analysis
62H30, %  Classification and discrimination; cluster analysis (statistical aspects) [See also 68T10, 91C20]; mixture models
68T10, %  Pattern recognition, speech recognition {For cluster analysis, see 62H30}
65D18
\end{AMS}

\section{Introduction}
Fix $k \in \N$ and write $[k] = \{1,2,\ldots, k\}$. 
For given data  
$X_N = \{ x_i\}_{i\in [N]} \\
 \subset \R^d$, 
the \emph{archetypal analysis} problem 
is to find a cardinality $k$ pointset $A = \{a_\ell\}_{\ell \in [k]} \subset \R^d$ that solves
\begin{subequations}
\label{e:arch}
\begin{align}
\min_{A \subset \R^d} \ & F(A) \\
& A \subset \co(X_N), 
\end{align}
\end{subequations}
where\[F(A) = \left(\frac{1}{N} \sum_{i = 1}^N d^2(x_i, \co(A) )\right)^{1/2} = \left( \int_{\R^d}d^2(x, \co(A)) \ d\mu_N(x)\right)^{1/2}. \]
Here, $\co(\cdot)$ denotes the convex hull, 
$d^2(\cdot,\cdot)$ denotes the squared Euclidean distance, and 
$\mu_N(x)= \frac{1}{N} \sum_{i \in [N]} \delta_{x_i}(x)$ is the empirical measure associated with the data $X_N$. 
The constraint in \eqref{e:arch} imposes that the convex hull of the dataset should contain the 
pointset $A$, and hence $\co(A)$. 
The objective function, $F$, in \eqref{e:arch} is the root mean squared distance from the data to the convex hull of $A$. 
By pointset, we mean the unordered collection of $k$ points in $\mathbb R^d$, which we denote by $\{ \mathbb R^d \}^k$. 
One may think of $\{\R^d\}^k$ as the product space $(\R^d)^k$ modulo permutation of its components, \ie, 
\[\{a_\ell\}_{\ell\in [k]}=\left\{(b_\ell)_{\ell\in [k]}\in(\R^d)^k: a_\ell = b_{\sigma(\ell)} \ \text{for some}\ \sigma\in S[k]\right\},
\] 
where $S[k]$ denotes the set of permutations on $[k]$. 
We refer to a minimizing pointset  $A = \{a_\ell\}_{\ell \in [k]} \in \{ \mathbb R^d \}^k$ solving \eqref{e:arch} as an \emph{archetype pointset}, the points $a_i \in A$ as \emph{archetype points}, and the convex hull, $\co(A)$ as the \emph{archetype polytope}.

Archetypal analysis was introduced  in 1994 by A.~Cutler and L.~Breiman as an unsupervised method to summarize multivariate data \cite{Cutler_1994}. 
They proved the following results: 
(i) If $k=1$, then the archetype point is the mean of the data, $X_N$. 
(ii) For $1<k<N$, there exists an archetype pointset, $A = \{a_\ell\}_{\ell \in [k]}$, \ie, there exists a solution to \eqref{e:arch} and furthermore, there exists an archetype pointset on the boundary of $\co(X_N)$. 
(iii) Finally for $k \geq N$, the archetype pointset is given by $A = X_N$, which attains the value $F(A) = 0$. 
They demonstrated that archetypal analysis can be reformulated as a nonlinear least squares problem and efficiently solved using an alternating minimization algorithm.
In their concluding remarks, they note that ``Because the archetypes are located on the boundary of the convex hull of the data, the procedure can be sensitive to outliers. Robust versions could be developed using convex hull peeling or the outlyingness idea of Donoho and Gasko (1992)''. 
Subsequently, there has been a lot of work to extend archetypal analysis to improve robustness and sensitivity to outliers \cite{Chen_2014,wu2017prototypal,mair2019coresets} and  to develop improved computational methods for computing archetypes \cite{Bauckhage_2009}. 
M.~M{\o}rup and L.~K.~Hansen  carefully compared archetypal analysis to other matrix factorization and clustering methods such as SVD/PCA, NMF, soft $k$-means, $k$-means, and $k$-medoids, and showed how archetypal analysis can be effectively used as an unsupervised machine learning tool for a variety of data analysis problems \cite{M_rup_2012}. 
Interestingly, nonlinear versions of archetypal analysis based on neural networks have recently been proposed \cite{Dijk2019,Keller_2019,keller2020learning}.
Finally, archetypal analysis has been applied to a variety of real-world applications, including 
finding archetype soccer players based on performance data \cite{Seth_2015}, 
query-focused multi-document summarization \cite{Canhasi_2014}, 
classification of galaxy spectra \cite{Chan_2003}, 
spatio-temporal dynamics \cite{Stone_1996}, \etc. 
We comment that archetypal analysis is also sometimes referred to as \emph{principal convex hull analysis} as it approximates the ``best'' convex hull, although we do not use this language here.

In this work, we consider the following consistency problem.   
Suppose that $x_1, x_2, \ldots$ are independently sampled from the probability measure $\mu$ and denote  
the first $N$ points by $X_N = \{ x_i\}_{i \in [N]}$. 
Let $A_N$ denote the optimal solution to \eqref{e:arch} for each $N$. 
\emph{Is there a set $A$ (depending on $\mu$), such that $A_N \to A$ as $N\to \infty$ in some sense? } 

Consistency results are fundamental in statistics and have important consequences in applications. 
Namely,  an estimate (\eg, archetype polytope) obtained using a consistent method will asymptotically stabilize, and so the collection of more data will yield diminishing returns. 
Generally, a consistency result for a problem posed on sampled data requires two ingredients: 
(i) a model from which the samples are drawn from; the original problem is viewed as a finite sample size problem for this model and
(ii) a notion of convergence for the estimated quantity. 
We give a consistency result for the archetypal analysis problem where the data is sampled from a measure with compact support and 
a consistency result for a modification of the archetypal analysis problem where the data is sampled from a measure with non-compact support. In both cases, we will use a Euclidean notion of distance; see \cref{dist:points}. 

\subsection{Consistency for measures with compact support} In the following, let $\nu$ be a general probability measure on $\R^d$ and $\mu$ be the sampling measure for data.
For convenience, we assume that $\mu$ has a bounded density. Note that \eqref{e:arch} is a special case of the following minimization problem when we take $\nu=\mu_N$:  
\begin{subequations}
\label{e:arch2g}
\begin{align}
\min_{ A \in \{ \mathbb R^d \}^k } \ &F_\nu(A)\\
 \textrm{s.t.}  \ &  A \subset \co(\supp(\nu)).
\end{align} 
\end{subequations} 
where $F_{\nu}(A) = \left(\int_{\R^d} d^2\left( x, \co (A) \right) \  d \nu(x)\right)^{1/2}$. Since $\mu_N\rightharpoonup\mu$ as $N\to\infty$, a natural candidate limiting problem for \eqref{e:arch} is thus given by
\begin{subequations}
\label{e:arch2}
\begin{align}
\min_{ A \in \{ \mathbb R^d \}^k } \ & F_\mu(A)\\
 \textrm{s.t.}  \ &  A \subset \co(\supp(\mu)).
\end{align} 
\end{subequations}  

In \cref{s:pf-bd}, we prove the following elementary existence result. 

\begin{thm}\label{t:existence}
Suppose that $\nu$ is a probability measure on $\R^d$ with compact support. Then the minimization problem \eqref{e:arch2g} admits at least one minimizer  with optimal value $F_\nu^\star \leq (\int_{\mathbb R^d} \|x - \bar x\|_2^2 \ d \nu(x))^{1/2}$,
 where $\bar x  = \int_{\mathbb R^d} x \ d\nu(x)$ is the mean. 
\end{thm}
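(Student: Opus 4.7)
The plan is to apply the direct method of the calculus of variations: establish that the feasible set is nonempty and compact, that the objective $F_\nu$ is continuous, and then exhibit a specific feasible pointset giving the claimed upper bound.

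First, I would show that the feasible set $K = \{A \in \{\R^d\}^k : A \subset \co(\supp(\nu))\}$ is nonempty and compact. Since $\supp(\nu)$ is compact, its convex hull $C := \co(\supp(\nu))$ is compact in $\R^d$. The mean $\bar x = \int x \, d\nu(x)$ lies in $C$ (a standard hyperplane-separation argument: if $\bar x \notin C$, a separating hyperplane would contradict $\supp(\nu) \subset C$). Hence $A = \{\bar x, \ldots, \bar x\}$, with the point $\bar x$ taken with multiplicity $k$, is a feasible pointset, so $K \neq \emptyset$. Identifying $\{\R^d\}^k$ with the quotient $(\R^d)^k / S[k]$, the set $K$ is the image of the compact set $C^k$ under the (continuous) quotient map, hence compact.

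Second, I would verify continuity of $F_\nu$ on $K$. If $A_n \to A$ in $\{\R^d\}^k$, then lifting to any convergent sequence in $(\R^d)^k$, the finitely-generated convex hulls $\co(A_n)$ converge to $\co(A)$ in the Hausdorff metric; this is a standard fact about convex hulls of finite point configurations. Because $K \mapsto d(x,K)$ is $1$-Lipschitz in the Hausdorff metric for each fixed $x$, the function $A \mapsto d^2(x, \co(A))$ is continuous in $A$. Moreover, for $x \in \supp(\nu) \subset C$ and $A \in K$, one has the uniform bound $d^2(x,\co(A)) \leq \mathrm{diam}(C)^2 < \infty$, so dominated convergence yields continuity of $A \mapsto \int d^2(x,\co(A)) \, d\nu(x)$, and hence of $F_\nu$, on $K$.

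Third, the extreme value theorem applied to the continuous function $F_\nu$ on the nonempty compact set $K$ produces a minimizer, establishing existence. To bound the optimal value, I would simply evaluate $F_\nu$ at the feasible pointset $A = \{\bar x, \ldots, \bar x\}$: since $\co(A) = \{\bar x\}$, we have $d^2(x,\co(A)) = \|x-\bar x\|_2^2$, so
\[
F_\nu^\star \leq F_\nu(A) = \left(\int_{\R^d}\|x-\bar x\|_2^2 \, d\nu(x)\right)^{1/2},
\]
as claimed.

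The only nontrivial technical ingredient is the joint continuity $A \mapsto \co(A)$ in the Hausdorff metric, which is standard for finite generators; the rest of the argument is a routine compactness/continuity scheme. I do not anticipate a genuine obstacle here — the result is essentially a warm-up whose role is to set up the consistency analysis that follows.
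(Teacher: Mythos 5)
Your proposal is correct and follows essentially the same route as the paper: compactness of the admissible set $\{\co(\supp(\nu))\}^k$ in $d_{2,\infty}$, continuity of $F_\nu$ via the Hausdorff-distance Lipschitz property of $d(x,\cdot)$ and the bound $d_H(\co(A),\co(B))\le d_H(A,B)$, the extreme value theorem, and evaluation at $\{\bar x\}^k$ for the upper bound. The only cosmetic difference is that the paper records a quantitative $1$-Lipschitz estimate for $F_\nu$ (its Lemma~2.1) where you invoke dominated convergence to get plain continuity.
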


Taking $\nu =\mu_N$ and $\mu$ in \cref{t:existence} deduces the existence of minimizers for both problems \eqref{e:arch} and \eqref{e:arch2}. However, it should be noted that solutions to \eqref{e:arch2} are not unique in general; non-unique  examples can easily be constructed when the support of $\mu$ has symmetries. 

As proven in  \cite{Cutler_1994} for the discrete problem,  the following theorem states that if $\nu$ has bounded support, then there exists a minimizer for problem \eqref{e:arch2g} on the boundary of the convex hull of the support of $\nu$.
 \begin{thm}\label{thm:bd}
Let $k\geq 2$.
Suppose that $\nu$ is supported on a bounded set in $\R^d$. 
Then there exists a minimizing pointset $\{ a_\ell\}_{\ell\in [k]}$ of problem \eqref{e:arch2g} such that $\{ a_\ell\}_{\ell\in [k]}\subset\partial(\co(\supp(\nu)))$. 
\end{thm}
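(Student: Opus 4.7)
The plan is to start from any minimizer $A = \{a_\ell\}_{\ell\in[k]}$ of \eqref{e:arch2g}, which exists by \cref{t:existence}, and perturb it one point at a time so that each $a_\ell$ moves to $\partial(\co(\supp(\nu)))$ without increasing the objective. The guiding observation is that $F_\nu(A)$ depends on $A$ only through $\co(A)$, and the map $B \mapsto d^2(x,B)$ is monotone non-increasing under set inclusion; hence, if we can replace $a_\ell$ by some $a_\ell' \in \partial K$ (writing $K := \co(\supp(\nu))$) so that $\co(A) \subseteq \co((A\setminus\{a_\ell\}) \cup \{a_\ell'\})$, then $F_\nu$ cannot go up, and minimality of $A$ forces equality.

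Fix $\ell$ and assume first that $K$ has non-empty interior in $\R^d$, since otherwise $\partial K = K$ and there is nothing to prove. If $a_\ell \in \co(\{a_j\}_{j\ne\ell})$, then $\co(A) = \co(A\setminus\{a_\ell\})$ is already independent of $a_\ell$, so $a_\ell$ may be replaced by any point of $\partial K$ with no change in $F_\nu$. Otherwise $a_\ell$ is a vertex of $\co(A)$; since $k\ge 2$, we may choose any $q \in \co(\{a_j\}_{j\ne\ell})$ (for instance $q = a_j$ for some $j\ne \ell$) and consider the parametric ray $a_\ell(t) := (1-t)q + t\,a_\ell$ for $t \ge 1$. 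The identity
\begin{equation*}
a_\ell = \left(1-\tfrac{1}{t}\right) q + \tfrac{1}{t}\, a_\ell(t)
\end{equation*}
is a genuine convex combination for every $t \ge 1$ and shows that $a_\ell \in \co(\{a_j\}_{j\ne\ell}\cup\{a_\ell(t)\})$; hence $\co(A) \subseteq \co((A\setminus\{a_\ell\}) \cup \{a_\ell(t)\})$. Because $K$ is convex and compact, there is a largest $t^* \ge 1$ with $a_\ell(t^*) \in K$, and maximality forces $a_\ell(t^*) \in \partial K$. Replacing $a_\ell$ by $a_\ell(t^*)$ therefore satisfies all the required properties.

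Iterating the above through $\ell = 1, \ldots, k$, always freezing the other (possibly already relocated) points, will produce an archetype pointset contained in $\partial K$ with the same value of $F_\nu$ as the original minimizer, proving the claim. I expect the main step to be the convex-combination identity above, which encapsulates exactly why extrapolating a vertex of $\co(A)$ outward along a ray issued from the convex hull of the remaining points only enlarges $\co(A)$; the remainder of the argument is routine compactness together with the straightforward induction on $\ell$.
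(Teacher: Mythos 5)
Your proof is correct: the convex-combination identity $a_\ell = (1-\tfrac{1}{t})q + \tfrac{1}{t}a_\ell(t)$ shows that extrapolating a vertex outward along a ray from $q\in\co(\{a_j\}_{j\neq\ell})$ only enlarges $\co(A)$, monotonicity of $x\mapsto d^2(x,\cdot)$ under set inclusion plus minimality then forces the objective to be unchanged, and compactness of $\co(\supp(\nu))$ guarantees the exit point $a_\ell(t^*)$ lies on the boundary (note this is exactly where the hypothesis $k\geq 2$ enters, to supply the anchor point $q$). The paper itself omits the proof, deferring to Cutler--Breiman's argument for the discrete case, and your outward-push construction is precisely that standard argument transplanted to the measure-theoretic setting, with the degenerate cases ($\co(\supp(\nu))$ of empty interior, or $a_\ell$ already in the hull of the remaining points) correctly handled.
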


As the proof of \cref{thm:bd} is similar to that in  \cite{Cutler_1994}, we omit the proof. 
 
\begin{rem}
We remark that \cref{thm:bd} does not imply that every minimizing pointset of problem \eqref{e:arch2g} is on the boundary of $\co(\supp(\nu))$. For example, consider the case when $k=3$ and $\nu$ is the empirical measure of the magenta data points $X_N$, as illustrated in \cref{exam:bd}. It is clear from our construction that all red and blue triangles are archetype triangles (optimal solutions). The topmost vertex of the blue triangle is in the interior of the $\co(X_N)$.  
Note that due to the two clusters of magenta data points, the gray triangle is not optimal. 
\end{rem}

\begin{figure}[t!]
\centering
\includegraphics[width = 0.9\textwidth]{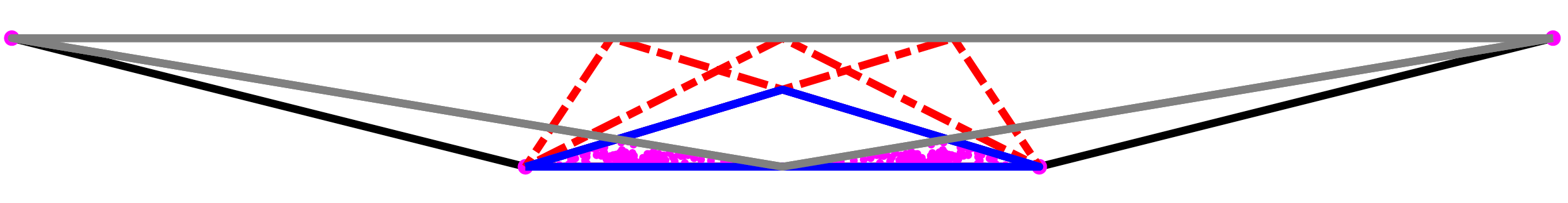}
\caption{A illustration of an example where the archetype pointset is attained on the interior of $\co(X_N)$.} 
\label{exam:bd}
\end{figure}

We first consider the consistency problem under the assumption that $\mu$ has compact support. To describe the convergence of archetype points, we will use the $d_{2, \infty}$ metric on $\{\R^d\}^k$ which is defined as follows: 

\begin{defin}\label{dist:points}
For fixed integers $d$ and $k$, the metric $d_{2, \infty}(\cdot, \cdot )$ on $\{\R^d\}^k$ is defined by
\begin{align*}
d_{2, \infty}(\{ a_\ell\}_{\ell \in [k]}, \{ b_\ell\}_{\ell \in [k]}):=\min_{\sigma\in S[k]}\max_{\ell\in [k]}\| a_{\sigma(\ell)}-b_\ell \|_2,
\qquad  \qquad  \{a_\ell\}_{\ell\in [k]}, \{b_\ell\}_{\ell\in [k]}\in\{\R^d\}^k,
\end{align*} 
where $S[k]$ is the set of permutations on $[k]$. 
\end{defin} 

Properties of $d_{2, \infty}$ will be given in \cref{s:pf-bd}.  The following Theorem, which is proven in \cref{s:pf-bd}, gives a consistency result under the assumption that $\mu$ has compact support. 
\begin{thm}\label{t:bdd consistency}
Fix $k \in \N$. 
Let $\mu$ be a probability measure on $\R^d$ with compact support. 
Let $x_1, x_2, \ldots $ be iid samples from $\mu$ and denote by $X_N = \{ x_i\}_{i \in [N]}$ the first $N$ samples. 
Let $A_N$ be an archetype pointset solving \eqref{e:arch} for each $N$. 
Then, $A_N$ has a convergent subsequence $A_{N_m}$ (in the $d_{2, \infty}$ sense) whose limit lies in the solution set of \eqref{e:arch2} $\mu$-a.s..
If \eqref{e:arch2} has a unique solution $A_\star$, then $A_N$ converge to $A_\star$ in $d_{2, \infty}$ $\mu$-a.s..
\end{thm}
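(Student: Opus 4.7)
The plan is to combine a compactness argument with continuity of the objective in the archetype pointset and a uniform law-of-large-numbers-type estimate for $F_{\mu_N}\to F_\mu$. Set $K=\co(\supp(\mu))$, which is compact; every archetype pointset $A_N$ lies in $\co(X_N)\subset K$, so the sequence $\{A_N\}$ sits inside the compact quotient $K^k/S[k]$ equipped with $d_{2,\infty}$. Extracting a subsequence $A_{N_m}\to A_\star$ yields a candidate limit with $A_\star\subset K$ by closedness, hence feasibility for \eqref{e:arch2}.

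Next I would establish two almost-sure convergence facts. (a) Uniform convergence $F_{\mu_N}\to F_\mu$ on compact subsets of $\{\R^d\}^k$: the map $A\mapsto d(\cdot,\co(A))$ is Lipschitz in $A$ with a constant depending only on $\mathrm{diam}(K)$, so the family $\{x\mapsto d^2(x,\co(A)):A\subset K\}$ is uniformly bounded and equicontinuous on $K$; combined with the almost-sure weak convergence $\mu_N\rightharpoonup\mu$ (equivalently, a $W_1(\mu_N,\mu)\to 0$ bound), this yields the required uniform convergence. (b) Hausdorff convergence $\co(X_N)\to K$: cover $\supp(\mu)$ by finitely many balls of radius $\epsilon$ each carrying positive $\mu$-mass, apply Borel--Cantelli to conclude that $X_N$ is eventually $\epsilon$-dense in $\supp(\mu)$ almost surely, and use Hausdorff-continuity of the convex hull on compact subsets of $\R^d$.

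To close, fix any $\tilde A=\{\tilde a_\ell\}_{\ell\in [k]}$ in the solution set of \eqref{e:arch2}. Using (b), select $\tilde a_\ell^N\in\co(X_N)$ with $\|\tilde a_\ell^N-\tilde a_\ell\|_2\to 0$, so that $\tilde A^N:=\{\tilde a_\ell^N\}_{\ell\in[k]}$ is feasible for \eqref{e:arch} and $\tilde A^N\to\tilde A$ in $d_{2,\infty}$. Optimality of $A_N$ gives $F_{\mu_N}(A_N)\leq F_{\mu_N}(\tilde A^N)$; passing to $N_m\to\infty$ and applying (a) together with continuity of $F_\mu$ in $A$ produces $F_\mu(A_\star)\leq F_\mu(\tilde A)=F_\mu^\star$, so $A_\star$ solves \eqref{e:arch2}. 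The uniqueness clause then follows from a standard subsequence-of-subsequence argument: if $A_\star$ is the unique minimizer, every subsequence of $\{A_N\}$ admits a further $d_{2,\infty}$-convergent subsubsequence with limit $A_\star$ by the preceding reasoning, forcing $A_N\to A_\star$ in $d_{2,\infty}$.

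The main obstacle I anticipate is the uniform convergence fact (a). Because $A_{N_m}$ is itself a random sequence depending on $N$, pointwise weak convergence $F_{\mu_N}(A)\to F_\mu(A)$ at a fixed $A$ is insufficient to deduce $F_{\mu_{N_m}}(A_{N_m})\to F_\mu(A_\star)$; upgrading to uniform convergence on a compact pointset family via equicontinuity is the crucial technical ingredient. A closely related subtlety is that the Hausdorff approximation in (b) must be compatible with the empirical feasibility constraint $A_N\subset\co(X_N)$, so that any minimizer of the continuum problem admits a sequence of empirically-feasible competitors converging in $d_{2,\infty}$.
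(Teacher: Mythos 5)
Your proposal is correct and reaches the same conclusion through the same overall architecture (compactness extraction, Hausdorff convergence $\co(X_N)\to\co(\supp(\mu))$, construction of empirically feasible competitors by projection, and a subsequence-of-subsequences argument for uniqueness), but the central technical lemma you use is genuinely different from the paper's. You handle the randomness of $A_{N_m}$ inside the empirical objective via a uniform law of large numbers: $\sup_{A\subset K}|F^2_{\mu_N}(A)-F^2_\mu(A)|\to 0$ a.s., justified by the uniform Lipschitz bound on $x\mapsto d^2(x,\co(A))$ together with $W_1(\mu_N,\mu)\to 0$. The paper instead avoids any uniformity over the class of pointsets: it only invokes pointwise a.s.\ convergence $F_{\mu_{N_m}}(B)\to F_\mu(B)$ at the two \emph{fixed} sets $B=A_\star$ and $B=A$, and bridges to the random $A_{N_m}$ using the $1$-Lipschitz continuity of $F_{\mu_{N_m}}$ in $d_{2,\infty}$ (the estimate in the proof of \cref{lm:con}, which holds for every measure, empirical or not) together with $d_{2,\infty}(A_{N_m},A_\star)\to 0$. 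The paper also routes the feasibility transfer through an intermediate problem \eqref{e:archetypal5} (empirical objective, continuum constraint) and projects \emph{its} minimizer onto $\co(X_N)$, whereas you project the continuum minimizer directly; your version is slightly more economical and equally valid. What each approach buys: the paper's argument is more elementary, requiring no empirical-process input beyond the pointwise strong law; your uniform-convergence lemma is a heavier tool for this theorem alone, but it is exactly the quantitative ingredient the paper later needs anyway (\cref{dd}, proved via Dudley's inequality) to obtain the rate in \cref{t:rate}, so your route unifies the consistency and rate arguments at the cost of front-loading the empirical-process machinery. Your identification of the two subtleties — that pointwise convergence at fixed $A$ does not suffice for the random minimizer, and that competitors must respect the constraint $A\subset\co(X_N)$ — matches precisely the two issues the paper's proof is engineered to resolve.
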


Although the consistency result stated in \cref{t:bdd consistency} only holds for a subsequence in general, the optimal objective values of the discrete problems are, in fact, convergent. 
Moreover, under appropriate assumptions on $\mu$, we can further establish the convergence rate using a result from random geometry \cite{D_mbgen_1996, Brunel_2019} and Dudley's inequality \cite{Vershynin_2018}. 
Before stating the convergence rate in \cref{t:rate}, we recall the $\alpha$-cap condition on a probability measure which is introduced in \cite[Assumption 1]{Brunel_2019}.

\begin{defin}[$\alpha$-cap condition]
Let $\alpha>0$. A probability measure $\nu$ defined on $\R^d$ is said to satisfy the $\alpha$-cap assumption with parameters $L$ and $\eta$ if it has bounded support and
\begin{align}
\label{cap}
\min_{u\in\mathbb S^{d-1}}\nu\left(\left\{y\in\co(\text{supp}(\nu)) \colon   \langle y, u\rangle\geq \max_{z\in\co(\text{supp}(\nu))}\langle z, u\rangle-\e\right\}\right)\geq L\e^{\alpha}, 
\qquad \forall  \e \in (0,\eta]. 
\end{align}
\end{defin}

\begin{rem}
The $\alpha$-cap condition can be interpreted as that for any $x\in\partial(\co( \supp (\nu)))$, the slab between the supporting hyperplane at $x$ and its $\epsilon$-perturbation captures $\mathcal O(\e^\alpha)$ mass under $\nu$. 
One can check that $\mu$ satisfies the $\alpha$-cap assumption with $\alpha=d$   when $\mu$  is supported on a convex domain with density bounded from below. Moreover, $\mu$ satisfies the $\alpha$-cap assumption with $\alpha\geq d$ whenever $\mu$ has a density, since Lebesgue measure of the slab defined in \eqref{cap} scales as $\mathcal O(\e^d)$ for $\e\to 0$.   
\end{rem}

\begin{thm}\label{t:rate}
Under the same conditions as \cref{t:bdd consistency}, $F_\mu(A_N)\to F_\mu(A_\star)$ as $N\to\infty$. Moreover, if $\mu$ satisfies the $\alpha$-cap condition with parameters $L$ and $\eta$, then with probability at least $1-4N^{-2}$, 
\begin{align}
&F_\mu(A_N)-F_{\mu}(A_\star)\lesssim \left(\frac{\log N}{N}\right)^{1/\alpha}&N>N(\eta),\label{rt}
\end{align}
where $N(\eta)$ is a non-increasing function in $\eta$ and the implicit constant in \eqref{rt} depends only on $\mu$. 
In particular, if $\mu$ is supported on a convex set with positive density, $F_\mu(A_N)-F_\mu(A_\star)=\mathcal O((\log N/N)^{1/d})$ $\mu$-a.s.. 
\end{thm}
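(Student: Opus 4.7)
The plan is to establish the two assertions separately. For $F_\mu(A_N) \to F_\mu(A_\star)$, I would combine \cref{t:bdd consistency} with continuity and a subsequence argument. First I would verify that $F_\mu$ is $1$-Lipschitz in $d_{2,\infty}$: from the pointwise bound $|d(x, \co A) - d(x, \co A')| \leq d_H(\co A, \co A') \leq d_{2,\infty}(A, A')$, the reverse triangle inequality in $L^2(\mu)$ gives $|F_\mu(A) - F_\mu(A')| \leq d_{2,\infty}(A, A')$. Since $\{A_N\}$ lies in a common compact subset of $\{\R^d\}^k$, every subsequence admits a further $d_{2,\infty}$-convergent sub-subsequence whose limit is a minimizer of \eqref{e:arch2} by \cref{t:bdd consistency}; continuity then forces $F_\mu(A_{N_m}) \to F_\mu^\star$ along each such subsequence, and hence $F_\mu(A_N) \to F_\mu(A_\star) = F_\mu^\star$ along the full sequence.

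For the rate, I would introduce an auxiliary pointset $\tilde A \subset \co(X_N)$ obtained by projecting each $a_\ell^\star \in A_\star$ onto $\co(X_N)$, so that $d_{2,\infty}(\tilde A, A_\star) \leq d_H(\co(X_N), \co(\supp(\mu)))$. Since $A_N$ minimizes $F_{\mu_N}$ over $A \subset \co(X_N)$ and $\tilde A$ is feasible, a four-term telescoping decomposition yields
\begin{align*}
F_\mu^2(A_N) - F_\mu^2(A_\star) &\leq 2 \sup_A \bigl| F_\mu^2(A) - F_{\mu_N}^2(A) \bigr| + \bigl[F_\mu^2(\tilde A) - F_\mu^2(A_\star)\bigr] \\
&\leq 2 \sup_A \bigl| F_\mu^2(A) - F_{\mu_N}^2(A) \bigr| + 2R \, d_{2,\infty}(\tilde A, A_\star),
\end{align*}
where $R$ is the diameter of $\supp(\mu)$ and the supremum runs over $A \subset \co(\supp(\mu))$. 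Under the $\alpha$-cap assumption and for $N > N(\eta)$, the random-geometry result of \cite{Brunel_2019} (refining \cite{D_mbgen_1996}) delivers $d_H(\co(X_N), \co(\supp(\mu))) \lesssim (\log N/N)^{1/\alpha}$ with probability at least $1 - 2N^{-2}$. The uniform-deviation term is controlled via Dudley's inequality \cite{Vershynin_2018} applied to the function class $\{x \mapsto d^2(x, \co A)\}$ of uniformly bounded, uniformly Lipschitz functions parametrized by a compact subset of $\R^{kd}$; together with a McDiarmid-type concentration step this yields $\sup_A | F_\mu^2(A) - F_{\mu_N}^2(A)| \lesssim \sqrt{\log N/N}$ with probability at least $1 - 2N^{-2}$. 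A union bound then produces an event of probability at least $1 - 4N^{-2}$ on which both estimates hold.

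The main obstacle is converting the squared-objective bound back into one on $F_\mu(A_N) - F_\mu(A_\star)$. Using the identity $(F_\mu(A_N) - F_\mu(A_\star))(F_\mu(A_N) + F_\mu(A_\star)) = F_\mu^2(A_N) - F_\mu^2(A_\star)$ and dividing by $F_\mu(A_\star) > 0$ (generic and absorbed into the implicit $\mu$-dependent constant) gives $F_\mu(A_N) - F_\mu(A_\star) \lesssim (\log N/N)^{1/\alpha}$ in the regime $\alpha \geq 2$, which is automatic since the remark after the $\alpha$-cap definition yields $\alpha \geq d$ whenever $\mu$ has a density. The special case follows by substituting $\alpha = d$ from that same remark for measures with positive density on a convex domain, and summability of the probability tail $4N^{-2}$ upgrades the high-probability bound to $\mu$-almost-sure convergence via Borel--Cantelli.
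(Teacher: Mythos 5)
Your rate argument is, at its core, the same as the paper's: the same auxiliary pointset obtained by projecting $A_\star$ onto $\co(X_N)$, the same two error sources (a uniform empirical-process deviation controlled by Dudley's inequality and the Hausdorff distance $d_H(\co(X_N),\co(\supp(\mu)))$ controlled by the result of \cite{Brunel_2019} under the $\alpha$-cap condition), and the same union bound giving $1-4N^{-2}$. Your proof of the first assertion is genuinely different and arguably cleaner: the paper re-derives $F_\mu(A_N)\to F_\mu(A_\star)$ from the quantitative decomposition itself, whereas you get it for free from \cref{t:bdd consistency} via the every-subsequence-has-a-convergent-sub-subsequence argument plus the $1$-Lipschitz continuity of $F_\mu$; both are fine.

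The one step I would push back on is the conversion from the squared bound to the claimed bound on $F_\mu(A_N)-F_\mu(A_\star)$. You divide by $F_\mu(A_N)+F_\mu(A_\star)$ and absorb $F_\mu(A_\star)^{-1}$ into the implicit constant, calling $F_\mu(A_\star)>0$ ``generic.'' But the theorem's hypotheses do not exclude $F_\mu(A_\star)=0$: for example, $\mu$ uniform on a triangle in $\R^2$ with $k=3$ satisfies the $\alpha$-cap condition with $\alpha=d$, yet the optimal value is zero, and in that regime your argument only yields the square root of the intended rate for the empirical-process contribution. The paper sidesteps this for the dominant term: it bounds the bias term at the level of the \emph{unsquared} objective, using the $1$-Lipschitz property of $F_{\mu_N}$ from \cref{lm:con} to get $|F_{\mu_N}(A_\star')-F_{\mu_N}(A_\star)|\leq d_H(\co(\supp(\mu)),\co(\supp(\mu_N)))$ directly, with no division and no factor of $R$. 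I would recommend restructuring your bias term the same way, and for the empirical-process term using the unconditional inequality $|F_\mu(A)-F_{\mu_N}(A)|\leq |F_\mu^2(A)-F_{\mu_N}^2(A)|^{1/2}\lesssim(\log N/N)^{1/4}$ rather than the division; this removes the spurious positivity assumption (and matches the paper's structure, which itself is slightly cavalier about this square root when $\alpha<4$).
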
 

The proof of \cref{t:rate} is given in \cref{s:pf-bd}.  The asymptotic rate in \eqref{rt} mainly depends on the $\alpha$ in the $\alpha$-cap definition, which, roughly speaking, depends on the geometry of the support and the density near the boundary of the support of $\mu$. In \cref{m-variance}, we discuss the  rate of convergence for densities with very large variance. 

In \cref{s:ExUnitDisk}, we demonstrate the consistency of the archetype analysis problem for the example when $d=2$ and $\mu$ is a uniform measure on the Euclidean unit disk. In this case, we prove that archetype points are the vertices of regular $k$-polygons inscribed in the unit disk. Furthermore, in \cref{s:NumEx}, we perform numerical experiments for the discrete problem \eqref{e:arch} with $k=3$ to demonstrate the convergence, as $N\rightarrow \infty$, of the archetype triangle to a regular triangle inscribed in a unit disk.

\subsection{Consistency for measures with non-compact support} 
We now consider the consistency problem when the probability measure, $\mu$, has non-compact support. Here we have that 
$\co(X_1) \subseteq \co(X_{2}) \subseteq \cdots$ and $\co(X_N)$ is a.s. unbounded as $N\to \infty$.  
In this case, it is clear that there can be no limiting problem for the archetype pointset $A_N$ as $N \to \infty$; the problem, as stated, is inconsistent. 
Consequently, we must modify \eqref{e:arch} or \eqref{e:arch2} to obtain a consistency result. 

When the probability measure $\nu$ has non-compact support, we add a `penalty term' to the energy that prevents the archetype points from tending to infinity and consider the objective function 
\begin{align}\label{pro:unbounded}
F_{\nu,\alpha}(A) &= \left( F_\nu^2(A) + \alpha V(A) \right)^{1/2}= \left(\int_{\R^d} d^2\left( x, \co (A) \right) \  d \nu(x) + \frac{\alpha}{k} \sum_{\ell \in [k] } \|a_\ell - \bar{a}\|_2^2\right)^{1/2}.
\end{align} 
 Here, $\alpha>0$ is a fixed parameter, 
$V(A) = \frac{1}{k} \sum_{\ell \in [k]} \left\|a_\ell - \bar a \right\|_2^2$  
is the variance of the archetype points, and $\bar a = \frac{1}{k} \sum_{\ell \in [k]} a_\ell$ is the mean of the archetype points. 
We think of $F_{\nu, \alpha}(A)$ as a trade-off between the  fidelity to the data and the size of the  archetype polytope.

\begin{rem}
In practice, datasets are always finite so that one can directly apply the archetypal analysis to them. Adding a regularization term can potentially make the algorithm more robust to unexpected outliers. However, the regularization parameter should be small, otherwise the solution will be close to the mean of the dataset (see \cref{t:ConsVarReg-alpha}), contradicting the original goal of summarizing datasets by their extreme patterns. 
\end{rem}

 \begin{rem}
Other regularization terms like the $k$-means type of penalties considered in \cite{wu2017prototypal} could be discussed similarly under our methodology.  Another penalty term one might consider would be $V(A) = \vol(\co (A))$. However, in this case, it is possible for $\co(A)$ to be a degenerate (lower dimensional) set  to avoid the penalization.     
\end{rem}

We consider the variance-regularized archetype problem, 
\begin{subequations}
\label{e:arch3}
\begin{align}
\min_{ A \in \{ \mathbb R^d \}^k } \ & F_{\nu,\alpha}(A)\\
 \textrm{s.t.}  \ &  A \subset \co(\supp(\nu)). 
\end{align} 
\end{subequations} 
For fixed $N$ and square-integrable $\nu$, the following result establishes the existence of minimizers of \eqref{e:arch3}.

\begin{thm}\label{existence:var-reg}
Let $\alpha>0$ be a fixed number. Suppose that $\nu$ is square-integrable, \ie, $\int_{\R^d}\|x\|_2^2\ d\nu(x)<\infty$. Then the minimization problem \eqref{e:arch3} admits at least one minimizer with optimal value 
$F_{\nu, \alpha}^\star \leq (\int_{\mathbb R^d} \|x - \bar x\|_2^2 \ d \nu(x))^{1/2}$,
where $\bar x  = \int_{\mathbb R^d} x \ d\nu(x)$ is the mean. 
 \end{thm}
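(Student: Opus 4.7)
The plan is to apply the direct method of the calculus of variations. First I would establish the upper bound by picking the degenerate pointset $A_0 = \{\bar x, \ldots, \bar x\}$ (all $k$ copies of the mean), which is feasible since the barycenter of any probability measure on $\R^d$ lies in $\co(\supp(\nu))$. For this choice $V(A_0) = 0$ and $F_\nu^2(A_0) = \int_{\R^d}\|x - \bar x\|_2^2 \, d\nu(x)$, giving the stated upper bound and, in particular, a finite infimum $F_{\nu,\alpha}^\star < \infty$ (using square integrability of $\nu$).

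Next I would take a minimizing sequence $A^{(n)} = \{a_\ell^{(n)}\}_{\ell\in[k]}$ and show it is uniformly bounded in $(\R^d)^k$. From $F_{\nu,\alpha}^2(A^{(n)}) \leq F_{\nu,\alpha}^\star{}^2 + 1$ for $n$ large, the penalty bound yields $\|a_\ell^{(n)} - \bar a^{(n)}\|_2^2 \leq k(F_{\nu,\alpha}^\star{}^2+1)/\alpha =: R^2$ for every $\ell$, so $\co(A^{(n)}) \subset \overline{B(\bar a^{(n)}, R)}$. If $\|\bar a^{(n)}\|_2 \to \infty$ along a subsequence, then for every $x \in \R^d$,
\begin{equation*}
d(x, \co(A^{(n)})) \ \geq\ \|x - \bar a^{(n)}\|_2 - R \ \to\ \infty,
\end{equation*}
and Fatou's lemma forces $\int d^2(x,\co(A^{(n)}))\,d\nu(x) \to \infty$, contradicting the data-fidelity bound. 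Hence $\{\bar a^{(n)}\}$ is bounded and therefore so is $\{A^{(n)}\}$, allowing extraction of a subsequence $A^{(n_j)} \to A^\star$ in the quotient $\{\R^d\}^k$.

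I would then verify feasibility and lower semicontinuity. The constraint $A^\star \subset \co(\supp(\nu))$ follows (after closure, or by the standard convention that this set is treated as closed) by closedness of the feasible set under componentwise convergence. For the objective, convergence of the archetype pointsets implies Hausdorff convergence $\co(A^{(n_j)}) \to \co(A^\star)$, and hence $d^2(x,\co(A^{(n_j)})) \to d^2(x,\co(A^\star))$ pointwise. Using the uniform bound $\|a_1^{(n_j)}\|_2 \leq C$ from the previous step, we get the domination
\begin{equation*}
d^2(x,\co(A^{(n_j)})) \ \leq\ \|x-a_1^{(n_j)}\|_2^2 \ \leq\ 2\|x\|_2^2 + 2C^2,
\end{equation*}
which is $\nu$-integrable by square integrability. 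Dominated convergence gives $F_\nu(A^{(n_j)}) \to F_\nu(A^\star)$, and continuity of the variance term $V$ is immediate. Thus $F_{\nu,\alpha}(A^\star) = \lim_j F_{\nu,\alpha}(A^{(n_j)}) = F_{\nu,\alpha}^\star$, so $A^\star$ is a minimizer.

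The main obstacle is the boundedness of the minimizing sequence, because the penalty only controls the spread of the archetype points around their centroid, not the centroid itself. Overcoming this requires combining the penalty with the data-fidelity term and the square-integrability hypothesis on $\nu$, as in the Fatou argument above — this is precisely where the need to regularize the problem when $\supp(\mu)$ is unbounded becomes visible.
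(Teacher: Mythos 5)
Your argument is correct and follows essentially the same route as the paper's proof: the direct method, with the upper bound obtained from the degenerate pointset $\{\bar x\}^k$, boundedness of a minimizing sequence established in the same two stages (the penalty term controls the spread of the points about their centroid, and the data-fidelity term prevents the centroid from escaping to infinity), followed by compactness and continuity of the objective. The only differences are cosmetic: you rule out escape to infinity via Fatou's lemma where the paper uses a ball carrying $\nu$-mass at least $1/2$ to get a quantitative contradiction, and you use dominated convergence for continuity where the paper invokes its Lipschitz estimate (\cref{var-reg:Lip}).
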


Taking $\nu =\mu_N$ and $\mu$ in \cref{existence:var-reg} deduces the existence of minimizers for both the discrete and continuous archetype problems with the variance regularization. 

We next state the consistency result for the variance-regularized archetype problem. Since the support of the distribution may be unbounded, a technical condition is needed on the distribution and discussed further below. 

\begin{thm} \label{t:ConsVarReg}
Fix $k \in \N$. 
Let $\mu$ be a probability measure on $\R^d$ which is square-integrable. 
Let $x_1, x_2, \ldots $ be iid samples from $\mu$ and denote by $X_N = \{ x_i\}_{i \in [N]}$ the first $N$ samples. 
Let $A_N$ be the archetype pointset solving \eqref{e:arch3} with $\nu = \mu_N$ each $N$. 
Suppose that $\mu$-a.s., for any $r>0$, there exists some $N(r) \in\N$ such that $B(r)\subset \co(X_{N(r)})$. 
Then, $A_N$ has a convergent subsequence $A_{N_m}$ (in the $d_{2, \infty}$ sense) whose limit lies in the solution set of \eqref{e:arch2} $\mu$-a.s..
If \eqref{e:arch3} has a unique solution $A_\star$, then $A_N$ converge to $A_\star$ in $d_{2, \infty}$ $\mu$-a.s..
\end{thm}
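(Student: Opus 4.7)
The plan is to adapt the compact-support argument behind \cref{t:bdd consistency}, with the penalty $\alpha V(A)$ taking over the role formerly played by boundedness of $\supp(\mu)$. I would first establish uniform-in-$N$ boundedness of $A_N$ $\mu$-a.s., then extract a $d_{2,\infty}$-convergent subsequence, and finally verify via a $\Gamma$-convergence-type comparison that its limit solves \eqref{e:arch3} with $\nu=\mu$.

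For the uniform bound, I test optimality of $A_N$ against the feasible pointset consisting of $k$ copies of the empirical mean $\bar{x}_N = \frac{1}{N}\sum_{i\in [N]} x_i \in \co(X_N)$, which has variance zero. This yields
\[
F_{\mu_N,\alpha}(A_N)^2 \;\leq\; \frac{1}{N}\sum_{i\in [N]} \|x_i - \bar{x}_N\|_2^2 \;\xrightarrow{N \to \infty}\; \int_{\R^d} \|x-\bar{x}\|_2^2\, d\mu(x)
\]
$\mu$-a.s.\ by the strong law of large numbers, so $\alpha V(A_N) \leq F_{\mu_N,\alpha}(A_N)^2$ stays bounded, controlling $\textrm{diam}(\co(A_N))$. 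Writing $\bar{a}_N = \frac{1}{k}\sum_{\ell\in [k]} a_\ell^N \in \co(A_N)$, the pointwise bound $\|x - \bar{a}_N\|_2 \leq d(x, \co(A_N)) + \textrm{diam}(\co(A_N))$ combined with the bias--variance decomposition $\int \|x - \bar{a}_N\|_2^2\, d\mu_N = \int \|x - \bar{x}_N\|_2^2\, d\mu_N + \|\bar{x}_N - \bar{a}_N\|_2^2$ gives uniform control of $\|\bar{a}_N - \bar{x}_N\|_2$, and since $\bar{x}_N \to \bar{x}$ a.s., the whole sequence $\{A_N\}_{N\in\N}$ lies in a common compact set $\mu$-a.s.

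Compactness extracts $A_{N_m} \to A_\star$ in $d_{2,\infty}$; feasibility of $A_\star$ follows because $a_\ell^{N_m} \in \co(X_{N_m}) \subset \overline{\co(\supp(\mu))}$, while the ball-containment hypothesis ensures conversely that any fixed finite pointset $B \subset \co(\supp(\mu))$ is $\mu$-a.s.\ feasible for the discrete problem once $N$ is large. The key step is then the two-sided convergence $F_{\mu_{N_m},\alpha}(A_{N_m}) \to F_{\mu,\alpha}(A_\star)$ and $F_{\mu_N,\alpha}(B) \to F_{\mu,\alpha}(B)$. The variance term is continuous in its pointset argument, so the work lies in the data-fidelity integral, which I would split as
\[
\int d^2(x, \co(A_{N_m}))\, d\mu_{N_m} \;-\; \int d^2(x, \co(A_\star))\, d\mu \;=\; I_m + II_m,
\]
where $I_m$ collects the change of integrand at fixed measure $\mu_{N_m}$ and $II_m$ the change of measure with integrand $d^2(\cdot, \co(A_\star))$ fixed. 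Term $II_m \to 0$ $\mu$-a.s.\ by strong LLN applied to the integrable random variable $d^2(\cdot, \co(A_\star))$ (finite expectation follows from square-integrability of $\mu$ and boundedness of $A_\star$). For $I_m$, I use the Lipschitz estimate
\[
\bigl|d^2(x, \co(A_{N_m})) - d^2(x, \co(A_\star))\bigr| \;\lesssim\; (1+\|x\|_2)\, d_{2,\infty}(A_{N_m}, A_\star),
\]
together with uniform boundedness of $\int (1+\|x\|_2)\, d\mu_{N_m}$ (again by LLN). The same splitting, with $A_\star$ replaced by a fixed $B$, yields the analogous limit $F_{\mu_N,\alpha}(B) \to F_{\mu,\alpha}(B)$, so comparing with the inequality $F_{\mu_{N_m},\alpha}(A_{N_m}) \leq F_{\mu_{N_m},\alpha}(B)$ valid for $m$ large gives $F_{\mu,\alpha}(A_\star) \leq F_{\mu,\alpha}(B)$ for every feasible $B$, proving optimality.

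The main obstacle is the uniform integrability required for $I_m + II_m \to 0$: the integrand $d^2(x, \co(A))$ grows quadratically in $x$ and $\mu_N \rightharpoonup \mu$ is only weak convergence on the unbounded domain $\R^d$. Both the square-integrability assumption on $\mu$ and the uniform boundedness of $A_N$ established in step one are essential to close this gap, and the ball-containment hypothesis is precisely what allows a recovery sequence $B_N \equiv B$ in the $\Gamma$-limsup half. Finally, if \eqref{e:arch3} with $\nu = \mu$ admits a unique minimizer $A_\star$, then every subsequence of $A_N$ has a further $d_{2,\infty}$-convergent subsequence whose limit must be $A_\star$, so the full sequence converges to $A_\star$ $\mu$-a.s.
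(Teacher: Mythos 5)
Your proposal is correct and follows the same overall architecture as the paper's proof: uniform $\mu$-a.s.\ boundedness of $\{A_N\}$, extraction of a $d_{2,\infty}$-convergent subsequence by Bolzano--Weierstrass, a comparison argument against fixed competitors whose eventual feasibility is supplied by the ball-containment hypothesis, and the standard subsequence argument when the minimizer is unique. Two technical steps are executed differently, and both of your variants work. For the uniform bound, the paper reuses the argument from the proof of \cref{existence:var-reg}: it fixes a ball $B(r_2)$ carrying mass at least $1/2$ under every $\mu_N$ and under $\mu$, and derives a contradiction from the fidelity term if the archetypes drift away from it; your bias--variance decomposition against the empirical mean $\bar x_N$ reaches the same conclusion more directly, since $\alpha V(A_N)\leq F_{\mu_N,\alpha}^2(A_N)$ controls $\mathrm{diam}(\co(A_N))$ and the decomposition then pins $\bar a_N$ to within a bounded distance of $\bar x_N$. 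For the comparison step, the paper works with the square-root functionals and exploits that the Lipschitz constant $(1+2\sqrt{\alpha})$ in \cref{var-reg:Lip} is independent of the underlying measure, so no moment-weighted estimate is needed; you work with the squared fidelity, which forces the weight $(1+\|x\|_2)$ in your bound for $I_m$ and an appeal to the first-moment law of large numbers --- harmless under square-integrability, but slightly more bookkeeping. Your $II_m$ step applies the strong law to $d^2(\cdot,\co(A_\star))$ with the sample-dependent limit $A_\star$ in the integrand; the paper's inequality \eqref{333} does exactly the same, so you are at the same level of rigor there. Finally, you correctly identify that the limit should solve \eqref{e:arch3} with $\nu=\mu$ (the statement's reference to \eqref{e:arch2} is evidently a typo), which is what the paper's proof actually establishes.
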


\begin{rem}
The technical assumption on $\mu$ states that $\mu$-a.s., for any $r>0$, there exists some $N(r) \in\N$ such that $B(r)\subset \co(X_{N(r)})$. The same proof applies if all the minimizers of the continuous problem are in the interior of $\co(\supp(\mu))$. In particular, the theorem applies to any $\mu$ defined on $\R^d$ with strictly positive density. 
\end{rem}

Note that as $\alpha \to \infty$, problem \eqref{e:arch3} reduces to the following problem 
\begin{subequations}
\label{e:arch4}
\begin{align}
\min_{ a \in \R^d } \ & \left(\int_{\R^d} d^2(x, a)\ d\nu(x)\right)^{1/2}\\
 \textrm{s.t.}  \ &  a \in \co(\supp(\nu)), 
\end{align} 
\end{subequations} 
which has a unique minimizer $\bar{x} = \int_{\R^d} x\ d\nu(x)$. Therefore, one should expect that solutions of \eqref{e:arch3}, $A_{\star,\alpha}$, shrink to $A_{\star, \infty} = \{\bar{x}\}^k$ as $\alpha$ tends to infinity. The following theorem quantifies this observation by giving an upper bound on the convergence rate. 

\begin{thm} \label{t:ConsVarReg-alpha}
Fix $k \in \N$. 
Let $\nu$ be a probability measure on $\R^d$ which is square-integrable. 
Let $A_{\star, \alpha}$ be a solution to the problem \eqref{e:arch3}. Let $\bar{x}=\int_{\R^d} x\ d\nu(x)$ and define $A_{\star, \infty} = \{\bar{x}\}^k$. 
For sufficiently large $\alpha$,  
\begin{align*}
d_{2, \infty}(A_{\star, \alpha}, A_{\star, \infty})\leq 8k^{1/2}\alpha^{-1/4}\left(\int_{\R^d}\|x-\bar{x}\|_2^2\ d\nu(x)\right)^{1/2}. 
\end{align*}
\end{thm}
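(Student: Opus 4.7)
The strategy is to exploit the optimality of $A_{\star,\alpha}$ against the ``trivial'' feasible pointset $\{\bar x\}^k$. Since $\bar x\in\co(\supp(\nu))$ is feasible and satisfies $V(\{\bar x\}^k)=0$ together with $F_\nu^2(\{\bar x\}^k)=\sigma^2:=\int_{\R^d}\|x-\bar x\|_2^2\ d\nu(x)$, optimality gives
\[
F_\nu^2(A_{\star,\alpha})+\alpha V(A_{\star,\alpha})\leq \sigma^2.
\]
Reading off the two summands separately, I obtain $V(A_{\star,\alpha})\leq \sigma^2/\alpha$ and $F_\nu^2(A_{\star,\alpha})\leq \sigma^2$. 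Writing $\bar a_\alpha$ for the mean of the archetype points and $R:=(k\sigma^2/\alpha)^{1/2}$, the variance bound combined with $\|a_\ell-\bar a_\alpha\|_2^2\leq k\,V(A_{\star,\alpha})$ implies every $a_\ell$ lies in the ball of radius $R$ around $\bar a_\alpha$, and hence $\co(A_{\star,\alpha})$ does as well.

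The remaining task is to bound the displacement $D:=\|\bar a_\alpha-\bar x\|_2$ of the archetype centroid from the data mean. The plan is to produce a lower bound on the fidelity term that explicitly sees $D$. Since $\co(A_{\star,\alpha})$ is contained in a ball of radius $R$ about $\bar a_\alpha$, the triangle inequality yields $d(x,\co(A_{\star,\alpha}))\geq (\|x-\bar a_\alpha\|_2-R)_+$ for every $x$, and an elementary case check shows $(t-R)_+^2\geq t^2-2Rt$ for all $t\geq 0$. Integrating against $\nu$, using the Pythagorean identity $\int\|x-\bar a_\alpha\|_2^2\ d\nu(x)=\sigma^2+D^2$ and Cauchy--Schwarz $\int\|x-\bar a_\alpha\|_2\ d\nu(x)\leq(\sigma^2+D^2)^{1/2}$, I obtain
\[
F_\nu^2(A_{\star,\alpha})\geq \sigma^2+D^2-2R\,(\sigma^2+D^2)^{1/2}.
\]
Combining with the upper bound $F_\nu^2(A_{\star,\alpha})\leq \sigma^2$ collapses this to the clean inequality $D^2\leq 2R(\sigma^2+D^2)^{1/2}$.

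For $\alpha$ large enough that $D\leq \sigma$ (a condition that closes up a posteriori from the bound I derive), the right-hand side is at most $2\sqrt{2}\,R\sigma$, which gives $D\leq 2^{3/4}(R\sigma)^{1/2}=2^{3/4}k^{1/4}\sigma\alpha^{-1/4}$. Because $R=k^{1/2}\sigma\alpha^{-1/2}$ is of strictly lower order in $\alpha$ than this, for sufficiently large $\alpha$ one also has $R\leq D$, and the triangle inequality then yields
\[
\max_{\ell\in[k]}\|a_\ell-\bar x\|_2\leq R+D\leq 2D\leq 2^{7/4}k^{1/4}\sigma\alpha^{-1/4},
\]
which is comfortably dominated by $8k^{1/2}\sigma\alpha^{-1/4}$ since $k^{1/4}\leq k^{1/2}$ for $k\geq 1$. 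As $A_{\star,\infty}=\{\bar x\}^k$, every permutation in the definition of $d_{2,\infty}$ gives the same value, so the left-hand side is precisely $d_{2,\infty}(A_{\star,\alpha},A_{\star,\infty})$.

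The main subtlety is the mismatch between the two natural rates in play: variance regularization pins the diameter of $\co(A_{\star,\alpha})$ at the fast rate $\alpha^{-1/2}$, whereas the centroid shift is controlled only through the quadratic fidelity term, which produces the slower $\alpha^{-1/4}$ rate that ultimately drives the bound. The delicate step is converting the radius control into a usable lower bound on $F_\nu^2$, and solving the resulting sub-quadratic inequality in $D$ tightly enough to recover the claimed exponent $-1/4$; everything else is bookkeeping with the triangle inequality.
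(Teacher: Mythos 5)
Your proposal is correct and follows essentially the same route as the paper: compare against the feasible pointset $\{\bar x\}^k$ to get $F_{\nu,\alpha}^2(A_{\star,\alpha})\leq\sigma^2$, use the variance penalty to pin the spread of $A_{\star,\alpha}$ about its centroid at rate $\alpha^{-1/2}$, lower-bound the fidelity term through the Pythagorean identity $\int\|x-\bar a_\alpha\|_2^2\,d\nu=\sigma^2+D^2$ to force $D\lesssim\alpha^{-1/4}$, and finish with the triangle inequality (the paper works with $F_\nu$ and a Hausdorff--Lipschitz estimate plus the inequality $\sqrt{x+y}\geq\sqrt{x}+y/(2\sqrt{x+y})$, whereas you work with $F_\nu^2$ and the pointwise bound $(t-R)_+^2\geq t^2-2Rt$; these are cosmetic variants of the same argument). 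One small slip: the claim that $R\leq D$ for large $\alpha$ is unjustified ($D$ could be $0$ while $R>0$), but it is harmless --- bounding $R+D$ by $R$ plus your derived upper bound $2^{3/4}k^{1/4}\sigma\alpha^{-1/4}$ on $D$ still gives at most $(1+2^{3/4})k^{1/2}\sigma\alpha^{-1/4}$ for $\alpha\geq 1$, well within the stated constant $8$.
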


In \cref{s:NumEx}, we perform several numerical experiments for data sampled from the normal distribution, an annular distribution, and a Gaussian mixture distribution to study the solution to problem \eqref{e:arch3}. Furthermore, in the experiments, we study how the area of $\co(A)$ depends on $\alpha$ and observe that $\co(A)$ always  contains $\bar{x}$ for  all initializations and samples of the data.

\subsection{A brief review of consistency results for other unsupervised learning methods} \label{s:PrevWork}
In \cref{t:bdd consistency} and \cref{t:ConsVarReg}, we state consistency results for archetypal analysis. Here, we briefly review related work proving consistency for other unsupervised methods. 

A related and well-known unsupervised method for data summary is  $k$-means clustering, which finds the $k$ points in the space that minimize the within class variance of the dataset. 
In the seminal work, Pollard established a strong consistency property for the theoretic $k$-means solutions by assuming the data points are independently sampled from a fixed distribution \cite{Pollard_1981}. 
An asymptotic normality result on the convergence rate was later proven in \cite{pollard1982central} using a functional central limit theorem. 
The rich theory behind the $k$-means clustering boils down to the convenient form of the objective function as well as the fact that the admissible set is data-independent, and allows generalization to other related problems. For instance, Sun {\it et al.} extended the consistency result in \cite{Pollard_1981} to the regularized $k$-means clustering, where a group LASSO/adaptive LASSO term is considered to balance the trade-off between model fitting and sparsity \cite{Sun_2012}.  
Consistency results concerning other unsupervised clustering methods are also worth noting. For example, Hartigan  provided a weak notion of consistency for Linkage algorithms,  proving that the algorithm can obtain a couple of high-density regions \cite{Hartigan_1981}. 
Finally, there are many consistency and convergence results for the graph Laplaican on geometric graphs
\cite{Bousquet,LafonThesis,BelkinUniform,Hein,Singer,HeinMore,calder2019improved}, 
as well as graph-based methods for data analysis based on the graph Laplacian, including 
spectral clustering  \cite{von_Luxburg_2008,trillos2018variational},
Cheeger and ratio graph cuts \cite{trillos2016consistency}, 
Dirichlet partitions \cite{Reeb2016}, 
and the PageRank algorithm \cite{Yuan2020}.

\subsection{Outline} 
In \cref{s:pf-bd}, we discuss the consistency problem for distributions with compact support; in particular, we prove \cref{t:existence}, \cref{t:bdd consistency}, and \cref{t:rate}. 
In \cref{s:ExUnitDisk}, we consider finding the archetype points for the uniform distribution on the unit disk. 
In \cref{s:UnboundedSupport}, we consider distributions with unbounded support and prove \cref{existence:var-reg}, \cref{t:ConsVarReg} and \cref{t:ConsVarReg-alpha}. 
In \cref{s:NumEx}, we present a variety of numerical examples that strongly support the analysis. 
We conclude in \cref{s:Disc} with a discussion.

\section{Proof of Theorems~\ref{t:existence}, \ref{t:bdd consistency}, and \ref{t:rate} for probability measures with compact support}  
\label{s:pf-bd}
Before proving the main results on the consistency of archetype problems for probability measures with bounded support, we discuss a few properties of the $d_{2, \infty}$ distance, defined in \cref{dist:points}, which will be used throughout the convergence analysis. To see that the $d_{2, \infty}$ distance is well-defined, it suffices to check the triangle inequality holds, as the non-negativity, identity and symmetry are obvious. For any $\{a_\ell\}_{\ell\in [k]}, \{b_\ell\}_{\ell\in [k]}, \{c_\ell\}_{\ell\in [k]}\in\{\R^d\}^k$, there exist some $\sigma_1, \sigma_2\in S[k]$ such that 
\begin{align*}
d_{2, \infty}(\{a_\ell\}_{\ell\in [k]}, \{c_\ell\}_{\ell\in [k]}) &= \max_{\ell\in [k]}\|a_{\sigma_1(\ell)}-c_\ell \|_2\\
d_{2, \infty}(\{b_\ell\}_{\ell\in [k]}, \{c_\ell\}_{\ell\in [k]}) &= \max_{\ell\in [k]}\|b_{\sigma_2(\ell)}-c_\ell \|_2.
\end{align*}
By definition,
\begin{align*}
d_{2,\infty}(\{a_\ell\}_{\ell\in [k]}, \{b_\ell\}_{\ell\in [k]})&\leq \max_{\ell\in [k]}\|a_{\sigma_1(\ell)}-b_{\sigma_2(\ell)} \|_2\\
&\leq \max_{\ell\in [k]}\|a_{\sigma_1(\ell)}-c_\ell \|_2 + \max_{\ell\in [k]}\|b_{\sigma_2(\ell)}-c_\ell \|_2\\
& = d_{2, \infty}(\{a_\ell\}_{\ell\in [k]}, \{c_\ell\}_{\ell\in [k]}) + d_{2, \infty}(\{b_\ell\}_{\ell\in [k]}, \{c_\ell\}_{\ell\in [k]}).
\end{align*}
It is clear that any bounded closed subset of $\{\R^d\}^k$ is compact in the topology induced by $d_{2, \infty}$. 

Another commonly used metric to measure the distance between two closed sets in a metric space $(\mathcal{M}, d)$ is the Hausdorff distance. For any two closed sets $X, Y\subset\mathcal{M}$, the Hausdorff distance between them is given by 
\begin{align}
d_H(X, Y):= \max \left\{\,\sup _{x\in X}\inf _{y\in Y}d(x,y),\,\sup _{y\in Y}\inf _{x\in X}d(x,y)\,\right\}.\label{haus}
\end{align}
It is easy to check that $d_{2, \infty}$ is stronger than the Hausdorff distance for sets consisting of $k$ points in $\R^d$. In particular, for $\{a_\ell\}_{\ell\in [k]}, \{b_\ell\}_{\ell\in [k]}\in\{\R^d\}^k$, 
\begin{align}
d_H(\{a_\ell\}_{\ell\in [k]}, \{b_\ell\}_{\ell\in [k]})\leq d_{2,\infty}(\{a_\ell\}_{\ell\in [k]}, \{b_\ell\}_{\ell\in [k]}).\label{H-2-infty}
 \end{align}
In general, $d_{2, \infty}$ and $d_{H}$ are not exactly the same. For example, consider $A = \{(0,0), (1, 0), (2,\\0) \}$ and $B=\{(0.5, 0), (2, 0.1), (2, -0.1)\}$ in $\{\R^2\}^3$. It is easy to check that $d_H(A, B)=0.5<\sqrt{1.01}=d_{2,\infty}(A, B)$.

We will first establish the existence of solutions to \eqref{e:arch2g} as stated in \cref{t:existence}. We use the following lemma about the objective function, $F_\nu\colon \{\R^d\}^k \to \R$, defined in \eqref{e:arch2g}. 
\begin{lem}\label{lm:con}
For any probability measure $\nu$ on $\R^d$ which is square-integrable, the objective function defined by
\begin{align*}
F_\nu( \{ a_\ell\}_{\ell \in [k]} )=\left(\int_{\R^d}d^2\left( x, \co\left( \{a_\ell \}_{\ell \in [k]} \right)\right)  d \nu(x)\right)^{1/2}
\end{align*}
is a $1$-Lipschitz continuous function from the metric space $(\{\R^d\}^k, d_{2, \infty})$ to $\R$, where $d_{2, \infty}$ is defined in \cref{dist:points}. 
\end{lem}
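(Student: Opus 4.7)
The plan is to reduce the lemma to a pointwise comparison of the distance functions $x\mapsto d(x,\co(A))$ and $x\mapsto d(x,\co(B))$. Writing $F_\nu(A)=\|d(\cdot,\co(A))\|_{L^2(\nu)}$, the reverse triangle inequality in $L^2(\nu)$ gives
\begin{equation*}
|F_\nu(A)-F_\nu(B)|\le \|d(\cdot,\co(A))-d(\cdot,\co(B))\|_{L^2(\nu)},
\end{equation*}
so it suffices to establish the pointwise bound $|d(x,\co(A))-d(x,\co(B))|\le d_{2,\infty}(A,B)$ uniformly in $x$, since then the right-hand side is at most $d_{2,\infty}(A,B)\cdot\nu(\R^d)^{1/2}=d_{2,\infty}(A,B)$. (Square-integrability of $\nu$ is only used to ensure that $F_\nu$ takes finite values; the bound $d(x,\co(A))\le\|x\|_2+\|a_1\|_2$ puts $d(\cdot,\co(A))$ in $L^2(\nu)$.)

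The pointwise bound will follow from the Hausdorff estimate $d_H(\co(A),\co(B))\le d_{2,\infty}(A,B)$ together with the standard fact that $y\mapsto d(y,S)$ is $1$-Lipschitz and that distance functions to closed sets differ by at most the Hausdorff distance between the sets. To obtain the Hausdorff estimate, I would first pick the permutation $\sigma\in S[k]$ realizing the minimum in \cref{dist:points}, relabel so that $\sigma=\mathrm{id}$, and write $r:=\max_\ell\|a_\ell-b_\ell\|_2=d_{2,\infty}(A,B)$. For an arbitrary $p=\sum_\ell \lambda_\ell a_\ell\in\co(A)$ with $\lambda_\ell\ge 0$, $\sum_\ell\lambda_\ell=1$, the companion point $q=\sum_\ell\lambda_\ell b_\ell\in\co(B)$ satisfies
\begin{equation*}
\|p-q\|_2 = \Bigl\|\sum_{\ell\in[k]}\lambda_\ell(a_\ell-b_\ell)\Bigr\|_2 \le \sum_{\ell\in[k]}\lambda_\ell\|a_\ell-b_\ell\|_2 \le r,
\end{equation*}
and symmetrically for points of $\co(B)$, which yields $d_H(\co(A),\co(B))\le r$.

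With this in hand, for a fixed $x\in\R^d$ let $p^\star$ be the projection of $x$ onto $\co(A)$ and choose $q\in\co(B)$ with $\|p^\star-q\|_2\le d_H(\co(A),\co(B))$. Then
\begin{equation*}
d(x,\co(B))\le \|x-q\|_2\le \|x-p^\star\|_2+\|p^\star-q\|_2\le d(x,\co(A))+d_{2,\infty}(A,B),
\end{equation*}
and swapping the roles of $A$ and $B$ gives the desired pointwise $1$-Lipschitz estimate. Chaining these steps closes the argument.

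None of the steps should be delicate; the only point requiring a moment of care is the permutation/relabeling in the first sub-step, because the definition of $d_{2,\infty}$ optimizes over $S[k]$ while pointsets are unordered, so I need to verify that relabeling is legitimate. Since convex hulls and the objective $F_\nu$ depend only on the unordered pointset, the relabeling by the optimal $\sigma$ is harmless, and the argument goes through.
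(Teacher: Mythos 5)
Your proposal is correct and follows essentially the same route as the paper: reverse triangle inequality in $L^2(\nu)$, the pointwise bound $|d(x,\co(A))-d(x,\co(B))|\le d_H(\co(A),\co(B))$, and then the comparison with $d_{2,\infty}$. The only cosmetic difference is that you prove $d_H(\co(A),\co(B))\le d_{2,\infty}(A,B)$ in one step via convex combinations under the optimal permutation, whereas the paper routes it through the two asserted inequalities $d_H(\co(A),\co(B))\le d_H(A,B)\le d_{2,\infty}(A,B)$; your version actually supplies details the paper leaves implicit.
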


\begin{proof}
Square-integrability guarantees that $F_\nu$ is finite for all $\{ a_\ell\}_{\ell \in [k]}\in\{\R^d\}^k$. 
The rest of the proof follows from direct computation. 
For any 
$ \{ a_\ell\}_{\ell \in [k]},   \{ b_\ell\}_{\ell \in [k]} \in\{\R^d\}^k$,  
we compute
\begin{align*}
|F_\nu(  \{ a_\ell\}_{\ell \in [k]} )-F_\nu(  \{ b_\ell\}_{\ell \in [k]} )|
&\leq \left(\int_{\R^d}|d(x, \co(\{a_j\}_{j\in [k]}))-d(x, \co(\{b_j\}_{j\in [k]}))|^2\ d\nu(x)\right)^{1/2} \\
&\leq  \left(\int_{\R^d}|d_H(\co(\{a_j\}_{j\in [k]}), \co(\{b_j\}_{j\in [k]}))|^2\ d\nu(x)\right)^{1/2} \\
 &=   d_H(\co(\{a_j\}_{j\in [k]}), \co(\{b_j\}_{j\in [k]})) \\
&\leq   d_H(\{a_j\}_{j\in [k]}, \{b_j\}_{j\in [k]}) \\
& \stackrel{\eqref{H-2-infty}}{\leq}  d_{2, \infty}(\{a_j\}_{j\in [k]}, \{b_j\}_{j\in [k]}).
\end{align*} 
\end{proof}

\begin{proof}[Proof of \cref{t:existence}]
Note that the admissible set $\{\text{supp}(\mu)\}^k\subset \{\R^d\}^k$ is closed in $d_{2, \infty}$ and has bounded diameter, therefore is compact. \cref{t:existence} is an immediate consequence of \cref{lm:con} together with the fact that a continuous function achieves its infimum on a compact set. This proves the existence of minimizers. The second part follows by noting that $\bar{x}\in\co(\supp{(\nu)})$.  
\end{proof}

We will next prove the consistency result stated in \cref{t:bdd consistency}. 
\begin{proof}[Proof of \cref{t:bdd consistency}]
Let $\calA$ be the solution set of \eqref{e:arch2}. It is clear that $\calA$ is non-empty by \cref{t:existence}. Our goal is to show that $\{A_N\}_{N\in\N}$ has a convergent subsequence whose limit lies in $\calA$. Since $\supp(\mu)$ is compact, we can apply the Bolzano--Weierstrass theorem to the $k$ elements of $\{A_N\}_{N\in\N}$ with a diagonal argument to find a convergent subsequence $\{A_{N_m}\}_{m\in\N}$ of $\{A_N\}_{N\in\N}$ such that $A_{N_m}\rightarrow A_\star$ in $d_{2, \infty}$ for some $A_\star \subset \co(\supp(\mu))$, \ie, $d_H(A_{N_m}, A_\star)\to 0$ as $m\to\infty$. 
We claim $A_\star\in\calA$ and this will be shown via a triangle-inequality type of argument.

Compared to \eqref{e:arch2}, the discrete problem \eqref{e:arch} has not only discrete objective function but also discrete constraints. To bridge the gap, we introduce an intermediate discrete minimization problem with the objective function in \eqref{e:arch} but the constraint in \eqref{e:arch2}:
\begin{align}\label{e:archetypal5}
\min_{ \{ a_\ell\}_{\ell \in [k]}} \ & F_{\mu_N}(A)\\
 \textrm{s.t.}  \ &  a_\ell\in \co(\supp(\mu)), \quad \ell \in{[k]}
\nonumber
\end{align} 
and denote a solution to \eqref{e:archetypal5} by $A_N'$ for each $N$. 
Similar to as in \cref{thm:bd}, we can take $A_N'$ to be located on the boundary of $\co(\supp(\mu))$.
Below, it is also convenient to define the set of points consisting of the convex projection of the points in $A_N'$ onto $\co(\supp(\mu_N))$,
\begin{align}
A_N'' = \text{Proj}_{\co(\supp(\mu_N))}(A_N').\label{proj}
\end{align}
Note that since $A_N' \subset  \overline{ \mathbb R^d \setminus \co(\supp(\mu_N))}$, we have that $A_N'' \subset \partial(\co(\supp(\mu_N)))$. 

The rest of the proof can be summarized as two steps: 
(i) first show that $d_H(A'_{N_m}, A''_{N_m})\rightarrow 0$ as $m\rightarrow\infty$ $\mu$-a.s. and 
(ii) then use it as a bridge to obtain that $A_\star$ is an optimal solution for \eqref{e:arch2}. 

To show (i), observe that $\co(\{x_j\}_{j\in [N]})\subseteq \co(\{x_j\}_{j\in [N+1]})\subseteq \co(\supp(\mu))$.
Since $\mu$ has compact support, $K:=\lim_{N\rightarrow\infty}\co(\{x_j\}_{j\in [N]})$ exists and is a convex set contained in $\co(\supp(\mu))$. It follows from a contradiction argument that $d_H(K,\co(\supp(\mu)))=0$ $\mu$-a.s.. If not, there would exist a point $x\in \co(\supp(\mu))$ such that $d(x, K)>0$. Since $x$ can be written as a convex combination of finitely many points in $\supp(\mu)$, say $x^{(1)}, \cdots, x^{(r)}$ for some $r>0$, at least one of them has positive distance to $K$. Without loss of generality we assume $d(x^{(1)}, K)>0$. This implies that the sequence $\{x_j\}$ does not intersect with a ball centered at $x^{(1)}$ with radius $\frac{1}{2}d(x^{(1)},K)$, \ie, $\{x_j\}\cap B(x^{(1)},\frac{1}{2}d(x^{(1)},K))=\varnothing$. But this happens on a $\mu$-null set: 
\begin{align*}
\mu\left(\{x_j\}\cap B(x^{(1)},\frac{1}{2}d(x^{(1)},K))=\varnothing\right)\leq\lim_{N\rightarrow\infty}(1-\mu(B(x^{(1)}, \frac{1}{2}d(x^{(1)},K))))^N=0,
\end{align*}
where $\mu(B(x^{(1)}, \frac{1}{2}d(x^{(1)},K)))>0$ because $x^{(1)}\in\supp(\mu)$. Therefore, $d_H(K, \co(\supp(\mu))) \\=0$. The desired result follows by observing that 
\begin{align}
\label{dist:conv}
d_H(A'_{N_m}, A''_{N_m})\leq d_H(\co\{x_j\}_{j\in [N]}, \co(\supp(\mu)))\rightarrow 0.
\end{align} 

For step (ii), we note that \eqref{dist:conv} together with $A_{N_m}\rightarrow A_\star$ and $\mu_{N_m}\rightharpoonup\mu$ $\mu$-a.s., implies that for any $A\in\calA$ and $\e>0$, there exists a sufficiently large $M$ such that for $m>M$,  
\begin{align}
d_H(A_{N_m}, A_\star)&\leq\e\label{11}\\
d_H(A'_{N_m}, A''_{N_m})&<\e\label{22}\\
\left(\int_{\R^d}d^2(x, A_\star)\ d\mu(x)\right)^{1/2}&\leq\left(\int_{\R^d}d^2(x, A_\star)\ d\mu_{N_m}(x)\right)^{1/2}+\e\label{33}\\
\left(\int_{\R^d}d^2(x, A)\ d\mu_{N_m}(x)\right)^{1/2}&\leq\left(\int_{\R^d}d^2(x, A)\ d\mu(x)\right)^{1/2}+\e\label{44}.  
\end{align}  
Therefore, for $m>M$, 
\begin{align*}
&\ \ \ \left(\int_{\R^d}d^2(x, A_\star)\ d\mu(x)\right)^{1/2}\\
&\stackrel{\eqref{33}}{\leq}\left(\int_{\R^d}d^2(x, A_\star)\ d\mu_{N_m}(x)\right)^{1/2}+\e\\
&\stackrel{\text{pf. Lem}\ \ref{lm:con}}{\leq}\left(\int_{\R^d}d^2(x, A_{N_m})\ d\mu_{N_m}(x)\right)^{1/2}+\left(\int_{\R^d}d^2_H(A_\star, A_{N_m})\ d\mu_{N_m}(x)\right)^{1/2}+\e\\
&\stackrel{\text{def. $A_{N_m}$}, \ \eqref{11}}{\leq}\left(\int_{\R^d}d^2(x, A''_{N_m})\ d\mu_{N_m}(x)\right)^{1/2}+2\e\\
&\stackrel{\text{pf. Lem}\ \ref{lm:con}}{\leq}\left(\int_{\R^d}d^2(x, A'_{N_m})\ d\mu_{N_m}(x)\right)^{1/2}+\left(\int_{\R^d}d_H^2(A''_{N_m},A'_{N_m})\ d\mu_{N_m}(x)\right)^{1/2}+2\e\\
&\stackrel{\text{def. $A'_{N_m}$}, \ \eqref{22}}{\leq}\left(\int_{\R^d}d^2(x, A)\ d\mu_{N_m}(x)\right)^{1/2}+3\e\\
&\stackrel{\eqref{44}}{\leq}\left(\int_{\R^d}d^2(x, A)\ d\mu(x)\right)^{1/2}+4\e. 
\end{align*}
Setting $\e\rightarrow 0$ yields 
\begin{align*}
\int_{\R^d}d^2(x, A_\star)\ d\mu(x)\leq\int_{\R^d}d^2(x, A)\ d\mu(x). 
\end{align*}
This finishes the proof of the first part of the theorem. The second part of the theorem follows by noting that whenever $\calA$ contains only one element, $A_\star$ does not depend on the choice of the subsequence $\{A_{N_m}\}$. A moment thought on the compactness of $\supp(\mu)$ reveals that the result in the first part holds for the whole sequence.   
\end{proof}

For the proof of \cref{t:rate}, we will need a uniform bound on the convergence rate on the empirical measure of $\mu$ in a class of test functions, which is given by the following lemmas. 
\begin{lem}[Dudley's inequality]\label{ddl}
Let $(X_t)_{t\in T}$ be a random process on a metric space $(T, d)$ with sub-gaussian increments:
\begin{align*}
&\| X_t -X_s\|_{\Psi_2}\leq K d(t,s),&\forall s, t\in T,
\end{align*}
where $\|\cdot\|_{\Psi_2}$ is the sub-gaussian norm and $K>0$ is a constant. 
Then, for every $u \geq 0$, the event
\begin{align*}
\sup_{t,s\in T} |X_t -X_s|\leq CK\left(\int_0^\infty\sqrt{\log\mathcal N(T,d,\e)}d\e + u\cdot \textrm{diam}(T)\right)
\end{align*}
holds with probability at least $1 -2 \exp(-u^2)$, where $\mathcal N(T,d,\e)$ is the $\e$-covering number of $T$ under metric $d$. 
\end{lem}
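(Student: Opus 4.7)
The plan is to apply the generic chaining (Dudley) argument. First, for each integer $k$, I would fix a minimal $2^{-k}$-net $T_k \subset T$ of cardinality $\mathcal{N}(T, d, 2^{-k})$, together with a nearest-point projection $\pi_k \colon T \to T_k$ satisfying $d(t, \pi_k(t)) \leq 2^{-k}$. For scales coarser than $\mathrm{diam}(T)$ the net collapses to a single point $t_0$, so the chain terminates on that side. For any $t \in T$, I would use the telescoping identity
\begin{align*}
X_t - X_{t_0} = \sum_{k > k_0} \bigl( X_{\pi_k(t)} - X_{\pi_{k-1}(t)} \bigr),
\end{align*}
where $k_0$ is the coarsest scale at which $|T_{k_0}| = 1$, and pass to the limit in $k$ using that sub-gaussian increments imply $L^2$ convergence $X_{\pi_k(t)} \to X_t$.

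Next I would control each link of the chain. By the sub-gaussian hypothesis and the triangle inequality, the increment $X_{\pi_k(t)} - X_{\pi_{k-1}(t)}$ has sub-gaussian norm at most $3K \cdot 2^{-(k-1)}$, and the number of distinct such links as $t$ varies over $T$ is at most $|T_{k-1}| \cdot |T_k| \leq \mathcal{N}(T, d, 2^{-k})^2$. Applying the standard maximum inequality for $n$ sub-gaussian variables with an auxiliary tail parameter $u_k > 0$, the contribution of the $k$-th layer is, with probability at least $1 - 2 e^{-u_k^2}$, bounded by $c K \cdot 2^{-k} \bigl( \sqrt{\log \mathcal{N}(T, d, 2^{-k})} + u_k \bigr)$. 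Choosing $u_k^2 = u^2 + k$ keeps the total failure probability under $2 e^{-u^2}$ via a union bound over scales, and summing the resulting dyadic series converts the sum into the entropy integral by comparison with a Riemann sum:
\begin{align*}
\sum_k 2^{-k} \sqrt{\log \mathcal{N}(T, d, 2^{-k})} \;\lesssim\; \int_0^\infty \sqrt{\log \mathcal{N}(T, d, \e)} \, d\e.
\end{align*}
The linear-in-$u$ tail term collects into $C K u \cdot \mathrm{diam}(T)$ after summing the geometric series $\sum_k 2^{-k}$ and noting $2^{-k_0} \lesssim \mathrm{diam}(T)$. Applying the triangle inequality to $|X_t - X_s| \leq |X_t - X_{t_0}| + |X_s - X_{t_0}|$ then gives the stated bound on $\sup_{s, t \in T} |X_t - X_s|$.

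The main obstacle is the bookkeeping of probabilities across scales: the naive union bound over infinitely many levels would destroy the tail. The resolution is the allocation $u_k^2 = u^2 + k$, which makes the levels summable while contributing only an additive constant inside the entropy integral. Since this is a classical textbook fact proved in detail in Vershynin's \emph{High-Dimensional Probability} (Theorem 8.1.6), I would simply cite that reference rather than reproduce the calculation, only sketching the chaining scheme above for the reader's convenience.
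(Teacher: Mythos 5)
Your proposal is correct and takes essentially the same approach as the paper: the paper offers no proof of its own and simply cites Vershynin's \emph{High-Dimensional Probability} (Theorem 8.16 there, the tail-bound form of Dudley's inequality), exactly as you propose to do. Your accompanying sketch of the dyadic chaining with the tail allocation $u_k^2 = u^2 + k$ is the standard argument from that reference and is sound.
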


The proof of \cref{ddl} can be found in \cite[Theorem 8.16]{Vershynin_2018}. 
\begin{lem}\label{dd}
Under the same conditions as \cref{t:bdd consistency},
\begin{align*}
\sup_{A\in\{\co(\supp(\mu))\}^k} \left|\frac{1}{N}\sum_{i\in [N]}d^2(x_i, \co(A))-\int_{\R^d}d^2(x, A) d\mu\right| 
\ \lesssim \ 
\left(\frac{\log N}{N}\right)^{1/2} 
\quad 
 \mu\text{-} a.s.
\end{align*} 
\end{lem}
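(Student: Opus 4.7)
The plan is to view the quantity inside the supremum as a centered empirical process $Z_A := \frac{1}{N}\sum_{i\in[N]} d^2(x_i,\co(A)) - \int_{\R^d} d^2(x,\co(A))\,d\mu(x)$ indexed by $A\in\{\co(\supp(\mu))\}^k$, verify that its increments are sub-gaussian with respect to $d_{2,\infty}$ with constant $K\asymp 1/\sqrt{N}$, apply Dudley's inequality (\cref{ddl}), and convert the resulting high-probability bound into the $\mu$-a.s.\ rate via Borel--Cantelli.

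First, I would exploit the compactness of $\supp(\mu)$ to set $D:=\textrm{diam}(\supp(\mu))<\infty$ and derive the pointwise bound
\begin{align*}
|d^2(x,\co(A))-d^2(x,\co(B))|\leq 2D\,d_{2,\infty}(A,B),
\end{align*}
by combining the 1-Lipschitz property of $y\mapsto d(y,\co(A))$ with the chain $d_H(\co(A),\co(B))\leq d_H(A,B)\leq d_{2,\infty}(A,B)$ already established earlier in this section, and the factorization $d^2(x,\co(A))-d^2(x,\co(B))=(d(x,\co(A))-d(x,\co(B)))(d(x,\co(A))+d(x,\co(B)))$. Each summand of $Z_A-Z_B$ is then a bounded independent centered random variable with range controlled by $2D\,d_{2,\infty}(A,B)$, and a direct application of Hoeffding's lemma for sub-gaussian sums yields
\begin{align*}
\|Z_A-Z_B\|_{\Psi_2}\leq \frac{C\,D\,d_{2,\infty}(A,B)}{\sqrt{N}}.
\end{align*}

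The second step is a covering-number estimate for the index set. Since $\co(\supp(\mu))$ is contained in a Euclidean ball of radius $D$, a standard volumetric argument gives $\mathcal N(\co(\supp(\mu)),\|\cdot\|_2,\e)\leq(3D/\e)^d$ for $\e\in(0,D]$. Moreover, an $\e$-cover of the $k$-fold product under the $\ell^\infty$ product metric descends to an $\e$-cover of the quotient $\{\co(\supp(\mu))\}^k$ in $d_{2,\infty}$ (just take the same centers, viewed as unordered sets), so
\begin{align*}
\mathcal N(\{\co(\supp(\mu))\}^k, d_{2,\infty}, \e)\leq (3D/\e)^{kd}.
\end{align*}
The Dudley entropy integral $\int_0^D\sqrt{kd\,\log(3D/\e)}\,d\e$ is therefore a finite constant depending only on $D$, $k$, and $d$, and the diameter of the index set in $d_{2,\infty}$ is at most $D$.

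Finally, I would apply \cref{ddl} with $u=2\sqrt{\log N}$ to obtain $\sup_{A,B}|Z_A-Z_B|\lesssim\sqrt{\log N/N}$ with probability at least $1-2N^{-4}$. To convert control of increments into control of the supremum itself, I would fix an arbitrary reference $A_0\in\{\co(\supp(\mu))\}^k$; a one-point sub-gaussian bound gives $|Z_{A_0}|\lesssim\sqrt{\log N/N}$ with probability at least $1-2N^{-4}$, and a union bound yields $\sup_A|Z_A|\leq |Z_{A_0}|+\sup_{A,B}|Z_A-Z_B|\lesssim\sqrt{\log N/N}$ with probability at least $1-4N^{-4}$. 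Since $\sum_N N^{-4}<\infty$, Borel--Cantelli upgrades this to the claimed $\mu$-a.s.\ rate. The step I expect to need the most care is the sub-gaussian increment estimate: one must avoid picking up a spurious factor of $k$ by exploiting that $d_{2,\infty}$ is an $\ell^\infty$-type metric, and must verify that the cover of the ordered product under the supremum metric genuinely descends to a cover of the unordered point-set quotient. Everything else is routine machinery.
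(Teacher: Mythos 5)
Your proposal is correct and follows essentially the same route as the paper: sub-gaussian increments of the centered empirical process with respect to $d_{2,\infty}$ (via the bound $|d^2(x,\co(A))-d^2(x,\co(B))|\leq 2D\,d_{2,\infty}(A,B)$), Dudley's inequality with the volumetric covering-number estimate for the $k$-fold quotient, a one-point Hoeffding bound to anchor the supremum, and Borel--Cantelli. The only differences are cosmetic (your choice $u=2\sqrt{\log N}$ versus the paper's $\sqrt{2\log N}$, and slightly more explicit detail on the factorization of $d^2-d^2$ and the descent of the cover to the unordered quotient).
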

\begin{proof}
For fixed $N$, consider the random process
\begin{align*}
Y_A = N^{-1}\sum_{i\in [N]}d^2(x_i, \co(A))-\int_{\R^d}d^2(x, A) d\mu,
\end{align*}
where the index set is equipped with the metric $d_{2,\infty}$. One can verify that $\{d^2(x_i, \co(A))-d^2(x_i, \co(B))\}_{i\in [N]}$ are i.i.d. random variables which are bounded by $2Dd_{2,\infty}(A, B)$ for $A, B\in\{\co(\supp(\mu))\}^k$, where $D$ is the diameter of $\co(\supp(\mu))$. 
It follows from a similar computation as in \cite[Section 8.2.2]{Vershynin_2018} that 
\begin{align*}
\|Y_A-Y_B\|_{\Psi_2}\lesssim \frac{D}{\sqrt{N}}d_{2,\infty}(A, B).
\end{align*} 
Therefore, by \cref{ddl}, with probability at least $1-2\exp(-u^2)$,
\begin{align}
\sup_{A\in\{\co(\supp(\mu)\}^k}|Y_A|\lesssim \frac{D}{\sqrt{N}}\left(\int_0^\infty\sqrt{\log\mathcal N(\{\co(\supp(\mu))\}^k, d_{2,\infty}, \epsilon)} \ d\epsilon + kDu\right)+|Y_B|,\label{dudley}
\end{align}
where $B\in \{\co(\supp(\mu)\}^k$ is some fixed element. Note that 
\begin{align}
\mathcal N(\{\co(\supp(\mu))\}^k, d_{2,\infty}, \epsilon)&\leq \left(\mathcal N(\co(\supp(\mu)), \|\cdot\|_2, \epsilon)\right)^k\nonumber\\
&\leq \left(\mathcal N(B_z(D), \|\cdot\|_2, \epsilon/2)\right)^k\leq\left(\frac{6D}{\epsilon}\right)^{dk}\label{covering},
\end{align} 
where $z\in\co(\supp(\mu))$, $B_z(D)=\{x\colon \|x-z\|_2\leq D\}$, and the last inequality follows from estimates on the Euclidean covering number of $\ell_2$ balls, see \cite{Vershynin_2018}.
Plugging \eqref{covering} into \eqref{dudley} and setting $u=\sqrt{2\log N}$ yields that with probability at $1-2N^{-2}$,  
\begin{align}
\sup_{A\in\{\co(\supp(\mu)\}^k}|Y_A|\lesssim \left(\frac{\log N}{N}\right)^{1/2}+|Y_B|.\label{ull}
\end{align}
On the other hand, $Y_B$ is the sum of i.i.d. centered bounded random variables, by Hoeffding's inequality, $|Y_B|\lesssim (\log N/N)^{1/2}$ with probability at least $1-N^{2}$.
An application of the Borel--Cantelli lemma finishes the proof. 
\end{proof}

\begin{proof}[Proof of \cref{t:rate}]
Fix $N>0$. 
Adopting the same notation as in the previous proof, we let $A_\star$ and $A_N$ denote two archetype pointsets for the continuum and discrete problems, respectively. 
Moreover, by \cref{thm:bd}, we may assume that they are on the boundary of the convex hull of their respective support. 
Similar to \eqref{proj}, define $A'_\star$ as the projection of $A_\star$ to $\co(\text{supp}(\mu_N))$. It is easy to check that  
\begin{align}
d_H(A_\star, A_\star')\leq d_H(\co(\text{supp}(\mu)), \co(\text{supp}(\mu_N))). 
\end{align} 
Without loss of generality, 
we assume that $F_{\mu_N}(A_N)\geq F_\mu(A_\star)$. 
Repeating the same trick in the proof of \cref{t:bdd consistency}, 
\begin{align}
& F_\mu(A_N) - F_{\mu}(A_\star) \\
\leq&\ |F_\mu(A_N)-F_{\mu_N}(A_N)|+F_{\mu_N}(A_N)-F_\mu(A_\star)\nonumber\\
\leq& \ |F_\mu(A_N)-F_{\mu_N}(A_N)|+F_{\mu_N}(A_\star')-F_\mu(A_\star)\nonumber\\
\leq& \ |F_\mu(A_N)-F_{\mu_N}(A_N)|+|F_{\mu_N}(A_\star)-F_\mu(A_\star)|+|F_{\mu_N}(A'_\star)-F_{\mu_N}(A_\star)|\nonumber\\
\leq& \ \underbrace{|F_\mu(A_N)-F_{\mu_N}(A_N)|+|F_{\mu_N}(A_\star)-F_\mu(A_\star)|}_{(i)}+\underbrace{d_H(\co(\text{supp}(\mu)), \co(\text{supp}(\mu_N)))}_{(ii)}\to 0\nonumber
\end{align}
as $N\to\infty$, where $(i)\to 0$ due to \cref{dd} and $(ii)\to 0 $ due to the proof of \cref{t:bdd consistency}. 
This finishes the first part of the proof.  

To obtain the convergence rate, note that \eqref{ull} combined with the triangle inequality implies that $(i)\lesssim (\log N/N)^{1/2}$ with probability at least $1-3N^{2}$.  
Bounding $(ii)$ for an arbitrary $\mu$ is intractable in general. However, under the $\alpha$-cap condition (with parameters $L$ and $\eta$), it has been established in \cite[Theorem 1]{Brunel_2019} that there exists a sufficiently large integer $N(\eta)$ (non-increasing in $\eta$), such that for $N>N(\eta)$, with $\mu$ at least $1-\delta$, 
\begin{align}
d_H(\co(\text{supp}(\mu)), \co(\text{supp}(\mu_N)))\lesssim\left(\frac{d\log (1/\delta)}{NL}\right)^{1/\alpha},\label{213}
\end{align}
Setting $\delta = N^{-2}$ and noting $\alpha\geq d\geq 2$ when $\mu$ has a density completes the proof. 
\end{proof}

\begin{rem}\label{m-variance}
A very large variance\footnote{Note that variance is defined for real-valued random variables. For a random vector $X\in\R^d$, we consider $\E \left[ \|X-\E[X] \|_2^2 \right]$ as its (total) variance, which is the trace of its variance-covariance matrix.} may  accelerate the convergence in \eqref{rt}. To see this, for fixed set $D \subset \mathbb R^d$,  let 
\begin{align*}
\mathscr M=\{\nu \colon  \text{$\nu$ is a probability measure on $\R^d$ such that $\supp(\nu)\subset D$}\}.
\end{align*} 
Consider the elements in $\mathscr M$ that maximize the variance:
\begin{align} \label{VV}
\mathcal V = \arg\max_{\nu\in\mathscr M}\E_\nu \left[ \|x-\E_\nu[x] \|_2^2 \right].
\end{align}
We claim that every $\mu\in V$ is supported on at most $d+1$ affinely independent points in $D$; for a proof, see \cref{appendix2}. Therefore, any element in $\mathscr M$ with variance approximating the variance of $\mu$ should concentrate on some finite set of affinely independent points. 
\end{rem}

\section{Example: uniform probability measure on the unit disk} \label{s:ExUnitDisk}
In this section, we consider the continuous archetypal analysis problem \eqref{e:arch2} for the uniform probability measure on the unit disk. 

\begin{prop}
Let $k\geq 3$,  $\D \subset \R^2$ be the Euclidean unit disk, and   $\mu$ the uniform probability measure on $\D$. 
The minimizers to \eqref{e:arch2} are extremal points for regular $k$-polygons inscribed in $\D$ and have squared objective value
\[
F_\mu^2 = k I \left( \frac{2 \pi}{k} \right), 
\]
where $I\colon [0,\pi] \to \mathbb R$ is defined in \eqref{e:I}. 
The minimizers are unique modulo rotation. 
\end{prop}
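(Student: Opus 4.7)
The plan is to exploit the rotational symmetry of the disk and reduce the problem to a one-dimensional convex minimization over consecutive angular gaps. By \cref{thm:bd}, there exists a minimizer $A = \{a_\ell\}_{\ell \in [k]}$ with every $a_\ell \in \partial\D$; write $a_\ell = (\cos\theta_\ell, \sin\theta_\ell)$ and order so that $0 \leq \theta_1 \leq \cdots \leq \theta_k < 2\pi$, letting $\phi_\ell = \theta_{\ell+1}-\theta_\ell$ (cyclically, with $\theta_{k+1} = \theta_1+2\pi$) denote the consecutive arc gaps. These satisfy $\phi_\ell \geq 0$ and $\sum_\ell \phi_\ell = 2\pi$. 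Assume first that $\co(A)$ contains the origin, equivalently that every $\phi_\ell \leq \pi$; then $\D\setminus \co(A)$ decomposes as a disjoint union of $k$ circular segments, and within the $\ell$-th segment the nearest point of $\co(A)$ lies on the corresponding chord.

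By the rotational and reflectional invariance of the uniform measure, the contribution of each segment to $F_\mu^2$ depends only on the subtended angle $\phi_\ell$. Defining $I(\phi)$ to be the integral of the squared distance to the chord over a segment of angular opening $\phi$, divided by $\pi$, one obtains
\[
F_\mu^2(A) = \sum_{\ell=1}^{k} I(\phi_\ell), \qquad \sum_{\ell=1}^k \phi_\ell = 2\pi.
\]
The analytic heart of the argument is then to derive $I$ in closed form, which is a routine integration in polar coordinates producing an elementary expression in $\phi$, $\sin\phi$, $\cos\phi$, and $\sin(2\phi)$, and to verify that $I$ is \emph{strictly convex} on $[0,\pi]$ by checking $I''(\phi) > 0$ from the explicit formula. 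Granted strict convexity, Jensen's inequality (or equivalently, a Lagrange multiplier argument equating $I'(\phi_\ell)$ across $\ell$) forces $\phi_\ell = 2\pi/k$ for every $\ell$, producing a regular $k$-polygon with objective value $k\, I(2\pi/k)$, and strict convexity promotes this to uniqueness of the gap vector, which is uniqueness of the archetype pointset modulo rotation of the circle.

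The main obstacle I anticipate, beyond the direct verification of strict convexity of $I$, is ruling out degenerate configurations in which $\co(A)$ fails to contain the origin (so some $\phi_\ell > \pi$, and the segment decomposition does not hold) or collapses to a lower-dimensional set. For such cases I would extend $I$ past $\pi$ by the natural geometric definition and compare directly: if some $\phi_\ell > \pi$, then by a simple perturbation that moves one of the two adjacent vertices along the circle toward the opposite arc one can strictly decrease the objective (since the gap of size $> \pi$ contributes more mass than the complementary arcs combined), so every minimizer must satisfy $\phi_\ell \leq \pi$ for all $\ell$, reducing to the convex regime handled above. Finally, for $k \geq 3$ the minimum gap $2\pi/k \leq 2\pi/3 < \pi$ so the regular polygon does lie in the admissible regime, confirming the claimed minimum value $F_\mu^2 = k\,I(2\pi/k)$.
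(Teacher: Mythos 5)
Your proposal is correct and follows essentially the same route as the paper: place the archetype points on $\partial\D$, decompose $\D\setminus\co(A)$ into circular segments indexed by the consecutive central angles, reduce the objective to $\sum_\ell I(\phi_\ell)$ with the same function $I$ computed in polar coordinates, and conclude by strict convexity and Jensen that the gaps must be equal. The only cosmetic difference is how the regime with a gap exceeding $\pi$ is excluded — you sketch a perturbation argument, whereas the paper compares values directly via $F_\mu^2(H)=I(\pi)=\tfrac18 > 3I(2\pi/3)\approx 0.035$ — but both serve the same purpose.
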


\begin{proof}
Let $R_k$ denote a regular $k$-polygon inscribed in the unit disc and $A_k$ its vertices. 
Using symmetry, the  squared objective value for $A_k$ can be simplified, 
\begin{equation} \label{e:F2Rk}
F_\mu^2(A_k) 
 = \int_{\D} d^2\left( x, R_k \right) \  d \mu(x)
 = \frac{1}{\pi}\int_{\D} d^2\left( x,R_k \right) \  d x 
 = \frac{k}{\pi} \int_{\sector_{\frac{2 \pi}{k} }} d^2\left( x, R_k \right) \  d x. 
\end{equation}
Here, $\sector_{\frac{2 \pi}{k} }$ denotes a sector of the disk with angle $\frac{2 \pi}{k}$. 

For future reference, it is useful to record a slightly more general calculation. Let 
\[
I(\alpha) = 
\frac{1}{\pi}\int_{\sector_\alpha}d^2(x, \bigtriangleup_\alpha)\ dx, 
\qquad  \qquad  \alpha\in [0, \pi].
\]
Here, denoting 
$A=(1,0)$,  
$O = (0,0)$, and 
$B=(\cos\alpha, \sin\alpha)$, 
we have used the notation that 
$\sector_\alpha$ is the sector of the unit disc $\sector AOB$ and 
$\bigtriangleup_\alpha$ is the triangle with vertices $A$, $O$, and $B$. 
The chord $\overline{AB}$ lies on the line
\[
\left\{ (x,y) \in \mathbb R^2 \colon \begin{pmatrix} \cos \frac{\alpha}{2} \\  \sin \frac{\alpha}{2} \end{pmatrix} \cdot \begin{pmatrix} x \\  y \end{pmatrix}  = \cos \frac{\alpha}{2} \right\}, 
\]
so $\overline{AB}$ can be explicitly parameterized by 
\[\overline{AB} = \left\{ (r \cos \theta , r \sin \theta ) \colon  r = \frac{\cos \frac \alpha 2 }{ \cos ( \theta - \frac \alpha 2 )} , \theta \in [0,\alpha] \right\}. 
\]
For a point, $(r \cos \theta, r \sin \theta)  \in  \sector_\alpha \setminus \bigtriangleup_\alpha$, we compute 
\begin{align*}
d^2 \left( (r \cos \theta, r \sin \theta) ,   \bigtriangleup_\alpha \right) 
&= d^2 \left(  (r \cos \theta, r \sin \theta) , \overline{AB} \right) \\
 &=  \left( \begin{pmatrix} \cos \frac{\alpha}{2} \\  \sin \frac{\alpha}{2} \end{pmatrix} \cdot \begin{pmatrix} r \cos \theta \\  r \sin \theta \end{pmatrix}  - \cos \frac{\alpha}{2}  \right)^2 \\
 &= \left( r \cos \left( \theta - \frac \alpha 2 \right) - \cos \frac \alpha 2 \right)^2. 
\end{align*}
For $\alpha \in [0,\pi]$, we compute 
\begin{subequations} \label{e:I}
\begin{align}
I(\alpha) &= \frac{1}{\pi}\int_{\sector_\alpha}d^2(x, \bigtriangleup_\alpha)\ dx  \\
&=  \frac{2}{\pi}  \int_0^{\frac{\alpha}{2}} \int_{\frac{\cos\frac{\alpha}{2}}{\cos(\frac{\alpha}{2}-\theta)}}^1 
\left( r \cos \left( \theta - \frac \alpha 2 \right) - \cos \frac \alpha 2 \right)^2 \ r dr d\theta \\
& = \frac{1}{2 \pi}\left(\frac{\alpha}{4}-\frac{13}{12}\sin\alpha+ \alpha\cos^2\frac{\alpha}{2}-\frac{1}{3}\sin\frac{\alpha}{2}\cos^3\frac{\alpha}{2}\right),
\end{align}
\end{subequations}

\begin{figure}[t]
\begin{center}
\includegraphics[width=.4\textwidth]{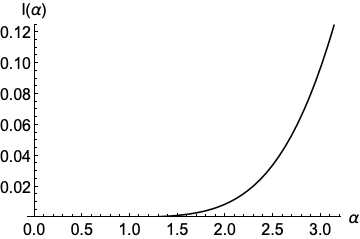}
\caption{The function $I \colon [0,\pi] \to \mathbb R$ defined in \eqref{e:I}.}
\label{f:Ifun}
\end{center}
\end{figure}

It can easily be checked that
\begin{itemize}
\item $I(0) = 0$ and $I(\pi) = \frac{1}{8}$, 
\item $I(\alpha)$ is an increasing positive function on $[0, \pi]$, and
\item $I(\alpha)$ is strictly convex on $[0, \pi]$.
\end{itemize}

From \eqref{e:F2Rk}, we obtain that $F_\mu^2(A_k) = k I \left(\frac{2 \pi}{ k} \right)$, which is a decreasing function in $k$. 

We now consider the optimization problem \eqref{e:arch2}. We first note that all the boundary points of $\D$ are extreme points, which has infinite cardinality. In this particular case, it is easy to check that for each $k\in\N$, the minimizers to \eqref{e:arch2} must lie on $\partial\D$, so their convex hull must be a cyclic polygon inscribed in the unit disc. Let $P_\alpha$ be a cyclic polygon inscribed in the unit disc with central angles $\alpha_1$, \ldots, $\alpha_k $ where $\alpha_j \in [0,2 \pi]$, $j \in [k]$ and $\sum \alpha_j = 2 \pi$. We first argue that if $P_\alpha$ were to be optimal, then it could not be contained in a  half disc, $H$, and therefore, we may assume that $\alpha_j \in [0, \pi]$, $j \in [k]$. This follows from the monotonicity property (if $P_\alpha \subset H$, then $F_\mu(P_\alpha) > F_\mu(H)$) and the fact that $F_\mu(H) > F_\mu(A_3)$. To see this last inequality, we compute 
\begin{align*}
F_\mu^2(H) = \int_{\mathbb D} d^2(x,H) \ d\mu (x)  = \frac{1}{\pi} \int_{\mathbb D}  \int_{\mathbb D} d^2(x,H) \ dx 
= I (\pi) 
= \frac{1}{8}
\end{align*}
and $F_\mu^2(A_3) = 3 I\left( \frac{2 \pi}{3} \right) \approx 0.035$.  

The optimization problem in  \eqref{e:arch2} then reduces to 
\begin{align*}
\min \ & \sum_{j=1}^k I( \alpha_j) \\
\textrm{s.t.} \ & \alpha_j \in [0,\pi]  \\
& \sum_{j=1}^k \alpha_j  = 2 \pi.
\end{align*}
This is a convex optimization problem with convex constraints. By Jensen's inequality, the unique solution is given by $\alpha_1 = \cdots = \alpha_k = \frac{2 \pi}{k} $, as desired. 
\end{proof}

\section{Proof of Theorems~\ref{existence:var-reg}, \ref{t:ConsVarReg}, and \ref{t:ConsVarReg-alpha} for probability measures with unbounded support}  
\label{s:UnboundedSupport}

We will establish existence of solutions to  \eqref{e:arch3} as stated in \cref{existence:var-reg}. We use the following lemma about the objective function, $F_{\nu,\alpha}\colon \{\R^d\}^k \to \R$, defined in \eqref{e:arch3}. 

\begin{lem}\label{var-reg:Lip}
For any square-integrable $\nu$, $F_{\nu,\alpha}$ is a $(1+2\sqrt{\alpha})$-Lipschitz continuous function from $(\{\R^d\}^k, d_{2, \infty})$ to $\R$.  
\end{lem}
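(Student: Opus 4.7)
The plan is to decompose $F_{\nu,\alpha}$ into two pieces and control each separately. Specifically, write
\[
F_{\nu,\alpha}(A) = \bigl\| \bigl( F_\nu(A),\ \sqrt{\alpha V(A)} \bigr) \bigr\|_2,
\]
where the outer norm is the Euclidean norm on $\R^2$. Since the Euclidean norm is $1$-Lipschitz on $\R^2$ (reverse triangle inequality), we will have
\[
|F_{\nu,\alpha}(A)-F_{\nu,\alpha}(A')| \leq |F_\nu(A)-F_\nu(A')| + \sqrt{\alpha}\,|\sqrt{V(A)}-\sqrt{V(A')}|,
\]
after additionally bounding the $\ell_2$ norm of a vector in $\R^2$ by its $\ell_1$ norm. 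It then suffices to establish the Lipschitz constants of $F_\nu$ and $\sqrt{V(\cdot)}$ separately and sum them with appropriate weights.

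The bound for $F_\nu$ is already in hand: \cref{lm:con} gives that $F_\nu$ is $1$-Lipschitz in $d_{2,\infty}$. The only real work is showing that $\sqrt{V(\cdot)}$ is $2$-Lipschitz in $d_{2,\infty}$. Here I would view $V(A)$ as $\frac{1}{k}\|Y(A)\|_2^2$, where $Y(A) \in \R^{dk}$ is the centered tuple $(a_\ell - \bar a)_{\ell \in [k]}$ (using the Frobenius/Euclidean norm), so that $\sqrt{V(A)} = \tfrac{1}{\sqrt k}\|Y(A)\|_2$. The reverse triangle inequality gives
\[
|\sqrt{V(A)}-\sqrt{V(A')}| \leq \tfrac{1}{\sqrt k}\|Y(A)-Y(A')\|_2.
\]
After choosing the permutation $\sigma$ achieving $d_{2,\infty}(A,A')$ and identifying $a_{\sigma(\ell)}$ with $a_\ell$ (which is permissible since $V$ is permutation-invariant), each component $Y_\ell(A)-Y_\ell(A') = (a_\ell-a'_\ell)-(\bar a-\bar a')$ has norm at most $2\,d_{2,\infty}(A,A')$, using $\|\bar a - \bar a'\|_2 \le \max_\ell\|a_\ell-a'_\ell\|_2$. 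Summing over $\ell$ yields $\|Y(A)-Y(A')\|_2 \le 2\sqrt{k}\,d_{2,\infty}(A,A')$, and hence $|\sqrt{V(A)}-\sqrt{V(A')}| \le 2\,d_{2,\infty}(A,A')$.

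Combining the two bounds via the first display gives
\[
|F_{\nu,\alpha}(A)-F_{\nu,\alpha}(A')| \leq (1+2\sqrt{\alpha})\,d_{2,\infty}(A,A'),
\]
as desired. Nothing in this argument is delicate; the only step that requires care is tracking the permutation in the definition of $d_{2,\infty}$ (handled by the permutation-invariance of $V$) and justifying the passage from $\ell_2$ to $\ell_1$ bounds on $\R^2$. Neither should present a genuine obstacle.
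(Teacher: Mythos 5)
Your proof is correct and follows essentially the same route as the paper: both split $F_{\nu,\alpha}$ via $\sqrt{a+b}\le\sqrt a+\sqrt b$ into the $F_\nu$ part (handled by \cref{lm:con}) and the $\sqrt{\alpha V(\cdot)}$ part, and both bound the latter by $2\sqrt{\alpha}\,d_{2,\infty}$ using the triangle inequality on the centered tuples together with $\|\bar a-\bar a'\|_2\le\max_\ell\|a_\ell-a'_{\sigma(\ell)}\|_2$. The only cosmetic difference is that you phrase the variance estimate through an explicit vector $Y(A)\in\R^{dk}$ and the reverse triangle inequality, whereas the paper applies the triangle inequality directly to the two normalized sums.
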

\begin{proof}
For any $A_1=\{a^{(1)}_\ell\}_{\ell\in[k]}, A_2=\{a^{(2)}_\ell\}_{\ell\in[k]}\in\{\R^d\}^k$, let $\sigma$ be the permutation of $[k]$ such that 
\begin{align*}
d_{2,\infty}(A_1, A_2)=\max_{\ell\in [k]}\|a^{(1)}_\ell-a^{(2)}_{\sigma(\ell)}\|_2. 
\end{align*}
First applying the triangle inequality and then the elementary inequality $\sqrt{a+b}\leq\sqrt{a}+\sqrt{b}$, 
\begin{align*}
&|F_{\nu,\alpha}(A_1)-F_{\nu,\alpha}(A_2)|\\
\leq&\ \left|F_\nu(A_1)-F_\nu(A_2)\right|+\sqrt{\alpha}\left|\left(\frac{1}{k}\sum_{\ell\in [k]}\|a^{(1)}_\ell-\bar{a}^{(1)}\|_2^2\right)^{1/2}-\left(\frac{1}{k}\sum_{\ell\in [k]}\|a^{(2)}_\ell-\bar{a}^{(2)}\|_2^2\right)^{1/2}\right|.
\end{align*}
By \cref{lm:con}, the first term on the right-hand side is bounded by $d_{2, \infty}(A_1, A_2)$. The second term is further bounded using the triangle inequality as follows:
\begin{align*}
&\sqrt{\alpha}\left|\left(\frac{1}{k}\sum_{\ell\in [k]}\|a^{(1)}_\ell-\bar{a}^{(1)}\|_2^2\right)^{1/2}-\left(\frac{1}{k}\sum_{\ell\in [k]}\|a^{(2)}_\ell-\bar{a}^{(2)}\|_2^2\right)^{1/2}\right|\\
\leq &\ \sqrt{\alpha}\left(\left(\frac{1}{k}\sum_{\ell\in [k]}\|a^{(1)}_\ell-a^{(2)}_{\sigma(\ell)}\|_2^2\right)^{1/2}+\left(\frac{1}{k}\sum_{\ell\in [k]}\|\bar a^{(1)}-\bar a^{(2)}\|_2^2\right)^{1/2}\right)\\
\leq &\ 2\sqrt{\alpha}\ d_{2,\infty}(A_1, A_2). 
\end{align*}  
Putting the estimates together completes the proof.  
\end{proof}

\begin{proof}[Proof of \cref{existence:var-reg}]
Denote by $F^{\star}_{\nu, \alpha} \geq 0$ as the infimum objective value. Square-integrability of $\nu$ ensures that $F^{\star}_{\nu, \alpha}<\infty$. Let $A_m = \{a_\ell^{(m)}\}_{\ell\in [k]}$ be a non-increasing minimizing sequence such that for $m\geq 1$, 
\begin{align*}
2F^{\star}_{\nu, \alpha}\geq F_{\nu, \alpha}(A_1)\geq F_{\nu, \alpha}(A_m)\geq F_{\nu, \alpha}(A_{m+1}).
\end{align*}
We first observe that the diameter of $\{A_m\}_{m\in\N}$ is uniformly bounded. Let $r_1=4\sqrt{\frac{k}{\alpha}}F^{\star}_{\nu, \alpha}$. For any set $A=\{a_\ell\}_{\ell\in [k]}$ with $\text{diam}(A)> r_1$, 
\begin{align*}
F_{\nu, \alpha}(A)\geq\sqrt{\frac{\alpha}{k}\sum_{\ell\in [k]}\|a_\ell-\bar{a}\|_2^2}\geq\sqrt{\frac{\alpha}{k}}\frac{\text{diam}(A)}{2}>2F^{\star}_{\nu, \alpha}\geq F_{\nu, \alpha}(A_1). 
\end{align*}
This implies that $\sup_{m}\text{diam}(A_m)\leq r_1$. We next show that $\{A_m\}_{m\in\N}$ cannot escape to infinity as $m$ increases, which is equivalent to showing that $\{a^{(m)}_1\}_{m\in\N}$ is uniformly bounded. 
To see this, note that most mass of $\nu$ is concentrated in a ball centered at the origin. 
In particular, we can find a ball $B(r_2)$ with radius $r_2>0$ such that $\nu(B(r_2))\geq 1/2$.  We claim that $\{a^{(m)}_1\}_{m\in\N}$ is contained in a ball of radius $r_3:=r_1+r_2+2\sqrt{2}F^{\star}_{\nu, \alpha}$. Suppose $\|a^{(m)}_1\|_2>r_3$ for some $m$. Then the distance between any $x\in B(r_2)$ and $A_m$ is greater than $r_3-r_1-r_2=2\sqrt{2}F^{\star}_{\nu, \alpha}$.  Therefore,
\begin{align*}
F_{\nu, \alpha}(A_m)\geq\left(\int_{B(r_2)} d^2(x, A_m)\ d\nu(x)\right)^{1/2}>\left(\int_{B(r_2)} 8{(F^{\star}_{\nu, \alpha})}^2\ d\nu(x)\right)^{1/2} \geq 2F^{\star}_{\nu, \alpha}\geq F_{\nu, \alpha}(A_1), 
\end{align*}
contradicting the hypothesis that $\{A_m\}_{m\in\N}$ is a decreasing minimizing sequence.  Hence we have proved that $\{A_m\}_{m\in\N}$ is contained in $B(r_1+r_3)$, \ie,  $\{A_m\}_{m\in\N}$ is uniformly bounded. Applying the Bolzano--Weierstrass theorem componentwise yields that $\{A_m\}_{m\in\N}$ admits a convergent subsequence in the $d_{2, \infty}$ metric, say $A_{m_n}\rightarrow A_\star$ as $n\rightarrow\infty$. Since $\co(\supp(\nu))$ is closed and $A_m\subset\co(\supp(\nu))$ for every $m$, $A_\star\subset\co(\supp(\nu))$. It follows from the fact that $F_{\nu,\alpha}$ is continuous with respect to $d_{2, \infty}$-metric that
\begin{align*}
F_{\nu, \alpha}(A_\star)=\lim_{n\rightarrow\infty}F_{\nu,\alpha}(A_{m_n})=F^{\star}_{\nu, \alpha}. 
\end{align*}
Hence, $A_\star$ is a minimizer of problem \eqref{e:arch3}. 
The inequality $F_{\nu, \alpha}(A_\star)\leq (\int_{\mathbb R^d} \|x - \bar x\|_2^2 \ d \nu(x))^{1/2}$ follows from evaluating $F_{\nu, \alpha}$ at $A = \{\bar{x}\}^k$. 
\end{proof}

We will next prove the consistency result stated in \cref{t:ConsVarReg}. 

\begin{proof}[Proof of \cref{t:ConsVarReg}.]
Let $\calA$ be the solution set of \eqref{e:arch3} with $\nu=\mu$. Let $A_N$ be a minimizer to problem \eqref{e:arch3} with $\nu=\mu_N$ and $A\in\calA$. Since $\mu_N\rightharpoonup\mu$ $\mu$-a.s., it is possible to find a centered ball of sufficiently large radius $r_2$ such that 
\begin{align*}
\min \left\{\sup_{N\in\N}\mu_N(B(r_2)), \mu(B(r_2))\right\}\geq \frac{1}{2}.
\end{align*}
It follows from the proof of \cref{existence:var-reg} that 
\begin{align*}
A_N&\subset B\left((8k^{1/2}\alpha^{-1/2}+2\sqrt{2})F^{\star}_{\mu_N, \alpha}+r_2\right)\\
A&\subset  B\left((8k^{1/2}\alpha^{-1/2}+2\sqrt{2})F^{\star}_{\mu, \alpha}+r_2\right).
\end{align*}
To show that $\{A_N\}_{N\in\N}$ admits a convergent subsequence in $d_{2,\infty}$, it suffices to show that $F_{\mu_N, \alpha}(A_N)$ is uniformly bounded in $N$. To see this, note that for every $N$, 
\begin{align*}
F_{\mu_N, \alpha}(A_N)\leq F_{\mu_N, \alpha}(\{x_1\}^k) = \frac{1}{N}\sum_{i\in [N]}(x_i-x_1)^2\rightarrow\int_{\R^d}\|x-x_1\|_2^2\ d\mu(x)<\infty. 
\end{align*}
Hence, for
\begin{align*}
R:= \left( 8k^{1/2}\alpha^{-1/2} + 2 \sqrt{2} \right) \max\left\{\sup_{N\in\N}F^{\star}_{\mu_N, \alpha}, F^{\star}_{\mu, \alpha}\right\}+r_2<\infty,
\end{align*}
we have 
\begin{align*}
(\cup_{N\in\N}A_N)  \cup A \ \  \subset \ \  B(R)\cap\co(\supp(\mu)). 
\end{align*}
Since $B(R)\cap\co(\supp(\mu))$ is compact, by the Bolzano--Weierstrass theorem, $\{A_N\}_{N\in\N}$ has a convergent subsequence $\{A_{N_m}\}_{m\in\N}$ such that $A_{N_m}\rightarrow A_\star$ in $d_{2,\infty}$ and in the Hausdorff metric. It is clear that $A_\star\subset \co(\supp(\mu))$ thus is a natural candidate for the minimizer of the continuous problem. To show that it is, we use the fact that for any $\e>0$, there exists a sufficiently large $M$ such that the following statements hold true whenever $m>M$:
\begin{align}
A\subset B(R)&\subset \co(X_{N_m})\label{111}\\
d_H(A_{N_m}, A_\star)&\leq \e\label{222}\\
\int_{\R^d}d^2(x, A_\star)\ d\mu(x)&\leq \int_{\R^d}d^2(x, A_\star)\ d\mu_{N_m}(x) +\e\label{333}\\
\int_{\R^d}d^2(x, A)\ d\mu_{N_m}(x)&\leq\int_{\R^d}d^2(x, A)\ d\mu(x) +\e.
\label{444}
\end{align}
Therefore, for $m>M$, 
\begin{align*}
F_{\mu, \alpha}(A_\star) &= \left(\int_{\R^d}d^2(x, A_\star)\ d\mu(x) + \alpha V(A_\star)\right)^{1/2}\\
&\stackrel{\eqref{333}}{\leq} \left(\int_{\R^d}d^2(x, A_\star)\ d\mu_{N_m}(x) + \alpha V(A_\star)+\e\right)^{1/2}\\
&\leq \left(\int_{\R^d}d^2(x, A_\star)\ d\mu_{N_m}(x) + \alpha V(A_\star)\right)^{1/2}+\e^{1/2}\\
&\stackrel{\eqref{222}, \ \text{pf. Lem}\ \ref{var-reg:Lip}}{\leq} \left(\int_{\R^d}d^2(x, A_{N_m})\ d\mu_{N_m}(x) + \alpha V(A_{N_m})\right)^{1/2}+(1+2\sqrt{\alpha})\e+\e^{1/2}\\
&\stackrel{\text{def. $A_{N_m}$}, \ \eqref{111}}{\leq}  \left(\int_{\R^d}d^2(x, A)\ d\mu_{N_m}(x) + \alpha V(A)\right)^{1/2}+(1+2\sqrt{\alpha})\e+\e^{1/2}\\
& \stackrel{\eqref{444}}{\leq} F_{\mu, \alpha}(A)+(1+2\sqrt{\alpha})\e+2\e^{1/2}. 
\end{align*}
Taking $\e\rightarrow 0$ yields $F_{\mu, \alpha}(A_\star)\leq F_{\mu, \alpha}(A)$ which completes the first part of the theorem.  The second part of the theorem follows by noting that whenever $\calA$ contains only one element, $A_\star$ does not depend on the choice of the subsequence $\{A_{N_m}\}$, hence the whole sequence converges to $A_\star$. 
\end{proof}

\begin{proof}[Proof of \cref{t:ConsVarReg-alpha}.]
Let $A_{\star, \alpha}$ be a minimizer of \eqref{e:arch3} and $\bar{a}_\alpha = \int_{A_{\star, \alpha}} x \ d\nu(x)$   be the center of $A_{\star, \alpha}$. It follows from the proof of \cref{existence:var-reg} that 
\begin{align*}
\text{diam}(A_{\star, \alpha})\leq 4k^{1/2}\alpha^{-1/2}F^{\star}_{\nu, \alpha}\leq 4k^{1/2}\alpha^{-1/2}\left(\int_{\R^d}\|x- \bar{x}\|_2^2\ d\nu(x)\right)^{1/2}. 
\end{align*}
where the second inequality follows by evaluating $F_{\nu, \alpha}$ at $A=A_{\star, \infty}$. Therefore, 
\begin{align}
F^{\star}_{\nu, \alpha} &= F_{\nu, \alpha}(A_{\star, \alpha})\nonumber\\
&\geq\left(\int_{\R^d}d^2(x, A_{\star, \alpha})\ d\nu(x)\right)^{1/2}\nonumber\\
&\geq \left(\int_{\R^d}\|x- \bar{a}_\alpha\|_2^2\ d\nu(x)\right)^{1/2} -\text{diam}(A_{\star, \alpha})\nonumber\\
&\geq \left(\int_{\R^d}\|x- \bar{x}\|_2^2\ d\nu(x) + \|\bar{a}_\alpha-\bar{x}\|_2^2\right)^{1/2} - 4k^{1/2}\alpha^{-1/2}\left(\int_{\R^d}\|x- \bar{x}\|_2^2\ d\nu(x)\right)^{1/2}.\label{haha}
\end{align}
Since $F^{\star}_{\nu, \alpha}\leq(\int_{\R^d}\|x- \bar{x}\|_2^2\ d\nu(x))^{1/2}$,  $\|\bar{a}_\alpha-\bar{x}\|_2\rightarrow 0$ as $\alpha\rightarrow\infty$. Therefore, there exists a sufficiently large $T$ (which only depends on $\nu$ and $k$) such that 
\begin{align}
&\|\bar{a}_\alpha-\bar{x}\|^2_2\leq 3\int_{\R^d}\|x-\bar{x}\|_2^2\ d\nu(x)& \alpha>T.\label{???}
\end{align}
Inserting \eqref{???} this into \eqref{haha} and then applying the elementary inequality $\sqrt{x+y}\geq\sqrt{x}+\frac{y}{2\sqrt{x+y}}$ ($x,y>0$) yields
\begin{align*}
F^{\star}_{\nu, \alpha}&\geq\left(\int_{\R^d}\|x- \bar{x}\|_2^2\ d\nu(x)\right)^{1/2} +\frac{1}{2}\left(\int_{\R^d}\|x- \bar{x}\|_2^2\ d\nu(x)+\|\bar{a}_\alpha-\bar{x}\|_2^2\right)^{-1/2} \|\bar{a}_\alpha-\bar{x}\|_2^2\\
&\ \ \ \ -4k^{1/2}\alpha^{-1/2}\left(\int_{\R^d}\|x- \bar{x}\|_2^2\ d\nu(x)\right)^{1/2}\\
&\geq \left(\int_{\R^d}\|x- \bar{x}\|_2^2\ d\nu(x)\right)^{1/2} +\frac{1}{4}\left(\int_{\R^d}\|x- \bar{x}\|_2^2\ d\nu(x)\right)^{-1/2} \|\bar{a}_\alpha-\bar{x}\|_2^2\\
 &\ \ \ \ -4k^{1/2}\alpha^{-1/2}\left(\int_{\R^d}\|x- \bar{x}\|_2^2\ d\nu(x)\right)^{1/2}\\
 &\geq F^{\star}_{\nu, \alpha}+\frac{1}{4}\left(\int_{\R^d}\|x- \bar{x}\|_2^2\ d\nu(x)\right)^{-1/2} \|\bar{a}_\alpha-\bar{x}\|_2^2-4k^{1/2}\alpha^{-1/2}\left(\int_{\R^d}\|x- \bar{x}\|_2^2\ d\nu(x)\right)^{1/2}, 
\end{align*}
from which one deduces that 
\begin{align*}
\|\bar{a}_\alpha -\bar{x}\|_2\leq 4k^{1/4}\alpha^{-1/4}\left(\int_{\R^d}\|x- \bar{x}\|_2^2\ d\nu(x)\right)^{1/2}. 
\end{align*}
Consequently, 
\begin{align*}
d_{2, \infty}(A_{\star, \alpha}, A_{\star, \infty}) \leq \|\bar{a}_\alpha-\bar{x}\|_2 + \text{diam}(A_{\star, \alpha})\leq 8k^{1/2}\alpha^{-1/4}\left(\int_{\R^d}\|x- \bar{x}\|_2^2\ d\nu(x)\right)^{1/2},
\end{align*}
as desired. 
\end{proof}

\section{Numerical implementation and experiments} \label{s:NumEx}
In this section, we first discuss an algorithm and its numerical implementation for the archetypal analysis problem  for a distribution with bounded support \eqref{e:arch} and the modified method  for measures with non-compact support \eqref{pro:unbounded}. We then present the results of various numerical experiments that support and complement our analysis. 

\subsection{Numerical methods and their implementation}
\subsubsection{Numerical method and implementation for \eqref{e:arch}}\label{sec:imp1}

The archetypal analysis problem \eqref{e:arch} can be equivalently rewritten as 
\begin{subequations}
\label{e:archetypal}
\begin{align}
\min_{\mathcal A \in \mathbb R^{N \times k}, \ \mathcal B \in \mathbb R^{k \times N}} \ & \frac{1}{N} \| X - X \mathcal A  \mathcal B \|^2_F \\
\textrm{s.t.}, \ & \mathcal A _{ij}, \mathcal B_{ij} \geq 0 \\ 
& \mathcal A^t 1 = 1, \quad  \mathcal B^t 1 = 1
\end{align}
\end{subequations}
where $X = [x_1\mid \cdots \mid x_N ] \in \mathbb R^{d \times N}$, and $1$ denotes vector of  the all ones of compatible size. Furthermore, we denote  $\mathcal Z \colon = X\mathcal A = [a_1\mid \cdots \mid a_k ] \in \mathbb R^{d \times k}$.

Motivated by \cite{Cutler_1994,M_rup_2012},  we use an alternating algorithm to update $\mathcal Z$ (\ie, update $\mathcal A$) and $\mathcal B$, respectively. That is, starting from some initial $\mathcal Z^{(0)}$, we find a sequence of solutions, 
\[\mathcal B^{(0)}, \ \mathcal Z^{(1)}, \ \mathcal B^{(1)}, \cdots, \  \mathcal B^{(p)}, \ \mathcal Z^{(p+1)}, \  \cdots, \]  
where 
\begin{align}\label{prob:updateb}
\mathcal B^{(p)} = \arg \min_{\mathcal B} \ & \frac{1}{N} \| X -  \mathcal Z^{(p)}  \mathcal B \|^2_F \\
\textrm{s.t.}, \ & \mathcal B_{ij} \geq 0\ \textrm {and}  \  \mathcal B^t 1 = 1 \nonumber
\end{align}
and 
\begin{align}\label{prob:updatez}
\mathcal Z^{(p+1)} = \arg \min_{\mathcal Z} \ & \frac{1}{N} \| X -  \mathcal Z  \mathcal B^{(p)} \|^2_F \\
\textrm{s.t.}, \ & \mathcal A_{ij} \geq 0, \ \  \mathcal A^t 1 = 1, \ \textrm {and}  \ \mathcal Z = X \mathcal A \nonumber. 
\end{align}

We first use the Gauss--Seidel strategy to solve \eqref{prob:updatez} by updating the columns of $\mathcal Z$ (\ie, $a_i$, $i\in [k]$) sequentially. 
For $\ell\in [k]$, the $\ell$-th column of $\mathcal{Z}^{(p)}$ is updated by 
\begin{align}\label{prob:updatez1}
a_\ell^{(p+1)} = \arg \min_{a_\ell} \ & \frac{1}{N} \bigg|\bigg| \frac{\sum_{i=1}^N \mathcal{B}^{(p)}_{\ell i} \xi_i}{\sum_{i=1}^N {\mathcal{B}^{(p)}_{\ell i}}^2} -  a_\ell   \bigg|\bigg|^2_2 \\
\textrm{s.t.}, \ & \mathcal A_{ij} \geq 0, \ \  \mathcal A^t 1 = 1, \ \textrm {and}  \ a_\ell = \{X \mathcal A\}_\ell \nonumber
\end{align}
where $\{\cdot\}_\ell$ denotes the $\ell$-th column,  and 
\begin{align}
\xi_i =x_i - \sum_{s \neq \ell}^k \mathcal{B}^{(p)}_{si}a_s 
\qquad \qquad \forall \  i\in [N].
\label{rainy}
\end{align} 
The details of the derivation of \eqref{prob:updatez1} is given in \cref{appendix}.
It's easy to see that \eqref{prob:updateb} and \eqref{prob:updatez1} are summarized by the following general convex least squares problem:
\begin{align}\label{cls}
\min_{\omega \in  \mathcal S} \ &  \| u -  C \omega \|_2^2
\end{align}
where 
\[ \mathcal S = \{\omega = (\omega_1,\omega_2, \cdots, \omega_q)  \colon \omega_i \geq 0\ \textrm {and}  \  \sum_{i=1}^q \omega_i = 1\}\] 
is the unit simplex and  $u \in  \mathbb R^{n}$ and $C = [t_1 \ | \ t_2 \ | \cdots | \  t_q] \in \mathbb R^{n \times q} $ are given. 
\eqref{cls} is a standard convex programming problem which can be effectively solved by the projected gradient descent method, as summarized in \cref{alg1}.
 
\begin{algorithm}[t!]

{\bf Input}: {Let $\omega^0$ be the initialization, $\tau>0$} \\
{\bf Output}: { $\omega^n$ that approximately solves  \eqref{cls}.} \\
 Set $s=1$\; \\
 {\bf While} {\it not converged} {\bf do} \\
{\bf 1. Gradient Descent.} Solve the initial value problem until time $\tau$ with initial value given by $\omega^{s-1}$:
\begin{align}\label{eq:updatew}
\begin{cases}
\frac{d \omega}{dt} = -C^TC\omega +C^T u \\
\omega(0) = \omega^{s-1}.
\end{cases}
\end{align}
Let $\tilde \omega = \omega(\tau)$.\; \\
{\bf 2. Projection Step.} Project $\tilde \omega$ to $\mathcal S$ to obtain $\omega^s$. \; \\
Set $s = s+1$\;
\caption{The projected gradient descent method to approximately solve \eqref{cls}. } 
\label{alg1}
\end{algorithm}

When $n$ and $q$ are relatively small, the exact solution of step 1 in \cref{alg1} can be efficiently computed as follows. Assume that $C^TC$ has eigenvalue decomposition $V\Sigma V^T$ with eigenvalues $\sigma_i$, $i \in [q]$. The solution to \eqref{eq:updatew} is
\begin{equation}\label{sol:odesystem}
\omega(t) = VP
\end{equation}
where $P \in \mathbb R^{q\times 1}$ with entries 
\begin{equation}
P_i = \begin{cases} e^{-\sigma_i t}P_i^0 - \frac{1}{\sigma_i}e^{-\sigma_i t}(V^TC^Tu)_i+\frac{1}{\sigma_i}(V^TC^Tu)_i ,\ &  \  \textrm{if}  \ \ \sigma_i\neq 0, \\
P_i^0 +t (V^TC^Tu)_i , \ &  \  \textrm{if}  \ \ \sigma_i = 0 \\
 \end{cases}
\end{equation}
and $P^0 = V^T\omega(0)$. For the high-dimensional and large data set, one can simply use the forward Euler method to approximate the solution \cite{M_rup_2012}.

\medskip

Step 2 of \cref{alg1} can be efficiently solved by writing it as the optimization problem
\begin{subequations}\label{prob:projection}
\begin{align} 
\min_\omega \ & \frac{1}{2}\|\omega-\tilde \omega\|_2^2 \\ 
\textrm{s.t.} \ & \sum_{i=1}^n w_i = 1, \ w_i \geq 0
\end{align}
\end{subequations}
and using the  projection algorithm  proposed in \cite{Duchi_2008}, which we summarize in \cref{alg2}. 
\begin{algorithm}[t!]
 {\bf Input}: {A vector $\tilde \omega \in \mathbb R^n$} \\
 {\bf Output}: {$\omega$ solves \eqref{prob:projection}.} \\
 {\bf Step 1.} Sort $\tilde \omega$ into $\mu$: $\mu_1 \geq \mu_2 \geq \cdots \geq \mu_n$; \\
 {\bf Step 2.} Find $$\rho = \max \left\{ j \in [n]\colon \mu_j - \frac{1}{j} \left(\sum_{r=1}^j\mu_r -1\right)>0\right\} ;$$ \\
 {\bf Step 3.} Define $$\theta = \frac{1}{\rho} \left(\sum_{i=1}^\rho \mu_i -1 \right);$$
 {\bf Step 4.} Set $$\omega_i = \max\{\tilde{\omega}_i - \theta,0\}.$$ 
\caption{Algorithm for the projection onto the simplex \cite{Duchi_2008}.} 
\label{alg2}
\end{algorithm}

\subsubsection{Numerical method and implementation for \eqref{pro:unbounded}}
Similarly to \cref{sec:imp1}, we can equivalently write the variance-regularized archetypal analysis problem \eqref{pro:unbounded} in the following  form:
\begin{subequations}
\label{e:archetypal1}
\begin{align}
\min_{\mathcal A \in \mathbb R^{N \times k}, \ \mathcal B \in \mathbb R^{k \times N}} \ & \frac{1}{N} \| X - X \mathcal A  \mathcal B \|^2_F +\frac{\alpha}{k} \sum_{i=1}^k\left\|a_i - \frac{1}{k}\sum_{j=1}^k a_j \right\|_2^2 \\
\textrm{s.t.}, \ & \mathcal A _{ij}, \mathcal B_{ij} \geq 0 \\ 
& \mathcal A^t 1 = 1, \quad  \mathcal B^t 1 = 1
\end{align}
\end{subequations} 
where $\alpha>0$ is a parameter. Again, $X = [x_1\mid \cdots \mid x_k ] \in \mathbb R^{d \times N}$, $1$ denotes the all $1$ vector of compatible size, and $\mathcal Z \colon = X\mathcal A = [a_1\mid \cdots \mid a_k ] \in \mathbb R^{d \times k}$.

Again, we use an alternating algorithm to update $\mathcal Z$ (\ie, update $\mathcal A$) and $\mathcal B$. That is, starting from some initial $\mathcal Z^{(0)}$, we find a sequence of solutions, 
$ \mathcal B^{(0)}$, $\mathcal Z^{(1)}$, $\mathcal B^{(1)}$, $\cdots$, $\mathcal B^{(p)}$, $\mathcal Z^{(p+1)}$,  $\cdots$,  
where 
\begin{align}\label{prob:updateb1}
\mathcal B^{(p)} = \arg \min_{\mathcal B} \ & \frac{1}{N} \| X -  \mathcal Z^{(p)}  \mathcal B \|^2_F +\frac{\alpha}{k} \sum_{i=1}^k\left\|a_i^{(p)} - \frac{1}{k}\sum_{j=1}^k a_j^{(p)} \right\|_2^2\\
\textrm{s.t.}, \ & \mathcal B_{ij} \geq 0\ \textrm {and}  \  \mathcal B^t 1 = 1 \nonumber
\end{align}
and 
\begin{align}\label{prob:updatez2}
\mathcal Z^{(p+1)} = \arg \min_{\mathcal Z} \ & \frac{1}{N} \| X -  \mathcal Z  \mathcal B^{(p)} \|^2_F+\frac{\alpha}{k} \sum_{i=1}^k\left\|a_i^{(p)} - \frac{1}{k}\sum_{j=1}^k a_j^{(p)} \right\|_2^2 \\
\textrm{s.t.}, \ & \mathcal A_{ij} \geq 0, \ \  \mathcal A^t 1 = 1, \ \textrm {and}  \ \mathcal Z = X \mathcal A \nonumber. 
\end{align}
We note that \eqref{prob:updateb1} is exactly same as \eqref{prob:updateb} when $\mathcal Z^{(p)}$ is fixed because the additional term is independent of  $\mathcal B$. As for \eqref{prob:updatez2}, we continue to use the Gauss--Seidel strategy. For $\ell\in [k]$, the $\ell$-th column of $\mathcal{Z}^{(p)}$ is updated by

\begin{align}\label{prob:updatez3}
a_\ell^{(p+1)} =\arg \min_{a_\ell} \ &  \bigg|\bigg| \frac{\frac{1}{N}\sum_{i=1}^N \mathcal{B}^{(p)}_{\ell i} \xi_i +\frac{\alpha}{k^2}\sum_{s\neq \ell}^k a_s }{\frac{1}{N}\sum_{i=1}^N {\mathcal{B}^{(p)}_{\ell i}}^2+\frac{\alpha(k-1)}{k^2}} -  a_\ell   \bigg|\bigg|^2_2 \\
\textrm{s.t.}, \ & \mathcal A_{ij} \geq 0, \ \  \mathcal A^t 1 = 1, \ \textrm {and}  \ a_\ell = \{X \mathcal A\}_\ell \nonumber
\end{align}
where $\{\cdot\}_\ell$ denotes the $\ell$-th column and $\xi_i$ is the same as \eqref{rainy}.  
The details of this derivation are given in \cref{appendix}.  Again, both problems (\ie, \eqref{prob:updateb1} and \eqref{prob:updatez2}) can be reduced to solving \eqref{cls} which can be efficiently solved via \cref{alg1,alg2}.

\begin{rem}
In \eqref{sol:odesystem}, as we mentioned, the exact solution is only possible when $n$ and $q$ are relatively small. It would be very expensive for high-dimensional and large data. However, if $q \gg n$ when $n$ is relatively small, to accelerate this procedure, we choose a convenient representation based on Carath\'eodory's theorem, which states that every point in a compact convex set in $\R^n$ can be written as a convex combination of $n+1$ extreme points. This suffices to set $X$ in \eqref{prob:updatez1} as extreme points of $\partial(\co(X_N))$, and the resulting $q$ in the computation is dramatically reduced. 
\end{rem}

\subsection{Numerical experiments}\label{sec:num}
In this section, we illustrate the performance of the algorithm and verify the above consistency results via several computational experiments. 
We implemented the algorithms in MATLAB. All reported results were obtained on a laptop with a 2.7GHz Intel Core i5 processor and 8GB of RAM.  In all experiments, we set $\tau = 0.5$. We choose the stopping criteria for the iteration to be $\|\mathcal Z^{(p+1)}-\mathcal Z^{(p)}\|_F^2 < tol$ and, in all experiments, set $tol = 1\times 10^{-6}$.

\subsubsection{Example 1: Uniform distribution in a disk} \label{sec:uniform}
We first consider the case where the distribution is the uniform distribution in a unit disc. That is, each data point $(x,y)$ is randomly generated by 
\[\begin{cases} x = \sqrt{r} \cos (2\pi\theta) \\
y = \sqrt{r} \sin (2\pi\theta)
\end{cases}
\]
where $r$ and $\theta$ are both random numbers generated from the uniform distribution on $[0,1]$ (\ie,  $U([0,1])$).

In this experiment, we study, when $k$ is fixed, how the convex hull of archetype points, $\co\left(\{a_\ell \}_{\ell \in [k]} \right)$, changes as the number of data points, $N$, increases. From the consistency result in \cref{t:bdd consistency} and the analytical result from \cref{s:ExUnitDisk} for this problem, we know that the optimal polytope should converge to the regular one with unit-length vertices. To measure this convergence, we report the maximum and minimum values of the angles at extreme points. 
The reported results were obtained by repeating the same experiment $100$ times and taking their mean for each value of $N$. 
\cref{fig:uniform} shows the change of angles as $N$ increases 
from $N=4$ to $13$ with $\Delta N = 1$, 
from $N=13$ to $333$ with $\Delta N= 10$, and 
from $N=333$ to $30,033$ with $\Delta N = 300$,
where $\Delta N$ is the step size. 
We observe that, as $N$ increases,  the maximum angle and the minimum angle converge to the same value, indicating that $\co\left(\{a_\ell \}_{\ell \in [k]} \right)$ converges to an equilateral triangle. 
The right figure in \cref{fig:uniform} is the plot of a minimizer for $30,033$ random data points. 
The red curve is the boundary of the convex hull of the archetype points and the magenta curve is the boundary of the convex hull of random data points.

\begin{figure}[ht]
\centering
\includegraphics[width = 0.40\textwidth,clip,trim=2cm 1cm 3cm 1cm]{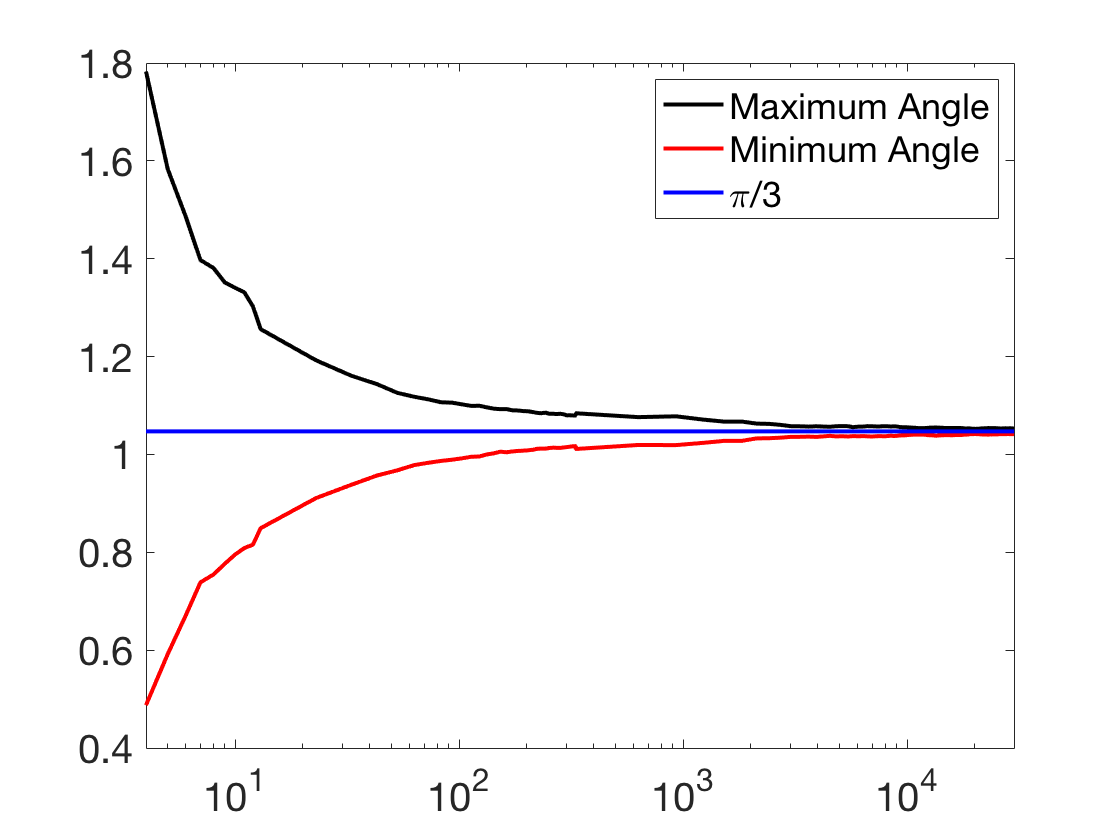}
\includegraphics[width = 0.40\textwidth,clip,trim=2cm 0cm 3cm 1cm]{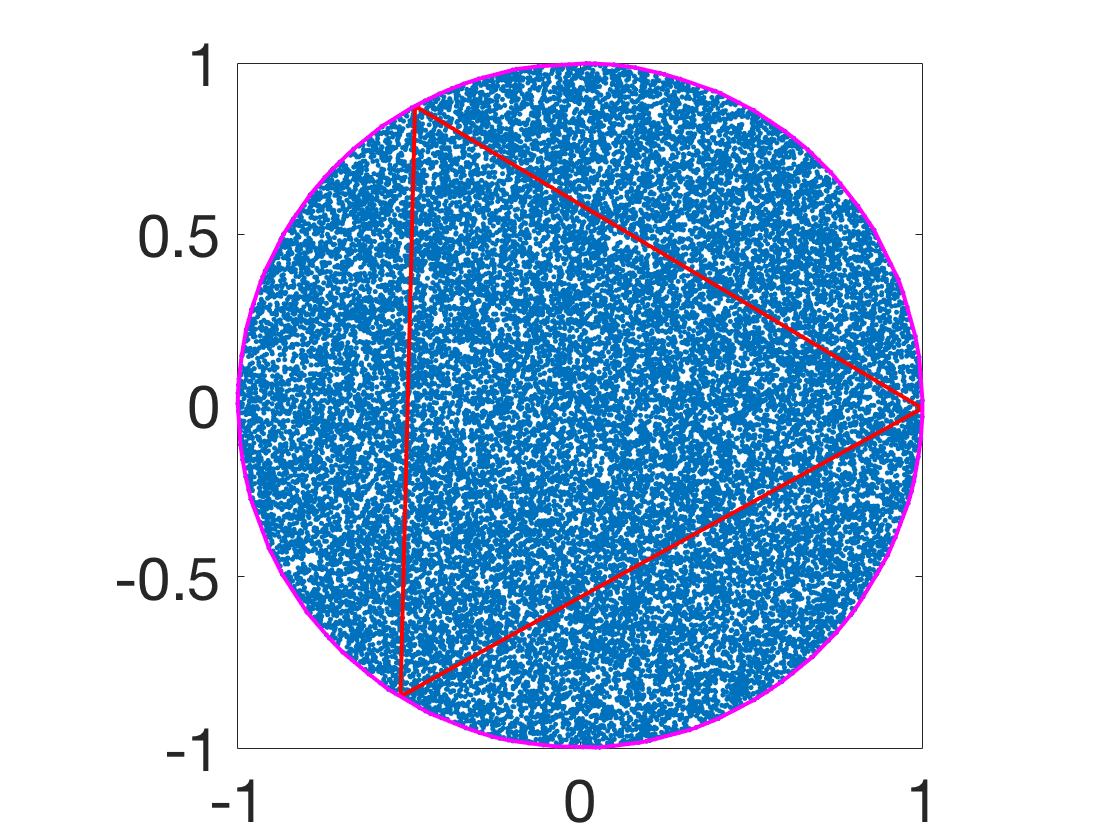}
\caption{{\bf (left)} The maximum and minimum angles of the convex polytope as $N$ increases from $4$ to $30,033$. This result is obtained by averaging over $100$ numerical experiments.  {\bf (right)} A minimizer for $30,033$ random data points. The red curve is the boundary of the convex hull of the archetype points and the magenta curve is the boundary of the convex hull of random data points.  See \cref{sec:uniform}.} \label{fig:uniform}
\end{figure}

\subsubsection{Example 2: Normal distribution} \label{sec:unbounded1}
In this example, we consider the case where the support of the distribution is non-compact. In this case, to obtain a solution with bounded support in the limit as $N\to \infty$, a penalty term is added to the objective functional as in \eqref{pro:unbounded}.

In this experiment, for a fixed $k =3$, we study the behavior of the solution as $\alpha$ increases.   We first generate $N= 30,000$ random points from a standard normal distribution $\mathcal N\left(0,10 \cdot I\right)$ where  $I$ is the identity matrix. 
In \cref{fig:normal1}, we plot the initialization and the solution to the archetypal analysis problem for $\alpha = 0$. 
We observe that all archetype points are located on the boundary of the convex hull of the data set, as expected by \cref{thm:bd}. Here, we note that, when $\alpha=0$, as $N\rightarrow \infty$, the convex polytope should grow to infinity in all directions. However, for a fixed $N$, we have a bounded data set and thus observe similar behavior to that in \cref{sec:uniform}. In this case with $30,000$ random data points, the area of the final $\co(A)$ is $233$. When $\alpha$ is slightly increased to $0.1$, the area of $\co(A)$ decreases to about $39$.  
We further study the dependence of the area of $\co(A)$ on $\alpha$ in the next experiment. 

In this experiment, we gradually increase the value of $\alpha$ and find the corresponding archetype points, $A$, by solving \eqref{pro:unbounded}. 
In \cref{fig:normal2}, we plot the area of $\co(A)$ for various values of $\alpha$. 
Here, we let $\alpha$ vary from $0.1$ to $5$ with step size $0.1$. 
We also plot the convex polytope, $\co(A)$, for some values of $\alpha$. 
We observe that $\co(A)$ shrinks as $\alpha$ increases, which is consistent with intuition. 
Also, for this isotropic distribution, we observe that the solution is always close to a regular triangle. 
In each subfigure in \cref{fig:normal2}, we indicate the mean of the distribution $\begin{pmatrix} 0 \\ 0 \end{pmatrix}$ with a black point for comparison with \cref{t:ConsVarReg-alpha}.  In all following numerical experiments, \ie, initializations and data samples $X_N$, we observe that the mean of the distribution is contained in $\co(A)$.

We expect the cost function of a minimizer $A_\star$ to be balanced in the sense that $F_\nu^2(A_\star) $ and $\alpha V(A_\star)$ are on the same order, \ie, 
\[
F_\nu^2(A_\star) \sim \alpha V(A_\star) \sim (F_{\nu,\alpha}^\star)^2. 
\] 
Since we also have that $V(A_\star) \sim \textrm{diam}(A_\star)^2$, where $\textrm{diam}(A)$ denotes the diameter of $\co(A)$, we obtain 
\[
\textrm{diam}(A_\star) \sim \frac{F_\nu^2(A_\star)}{\alpha}. 
\]
The scaling in \cref{fig:normal2} is consistent with this argument.

\begin{figure}[ht]
\centering
\includegraphics[width = 0.35\textwidth,clip,trim=6cm 1cm 6cm 2cm]{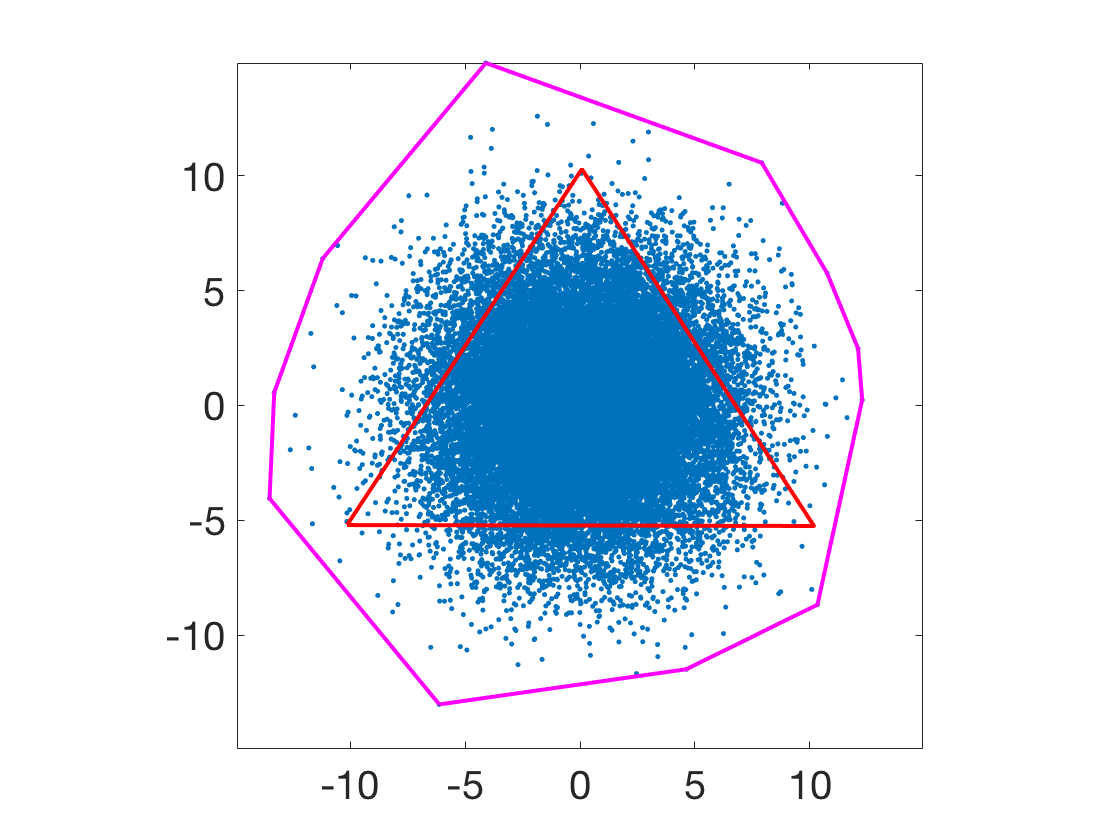}
\includegraphics[width = 0.35\textwidth,clip,trim=6cm 1cm 6cm 2cm]{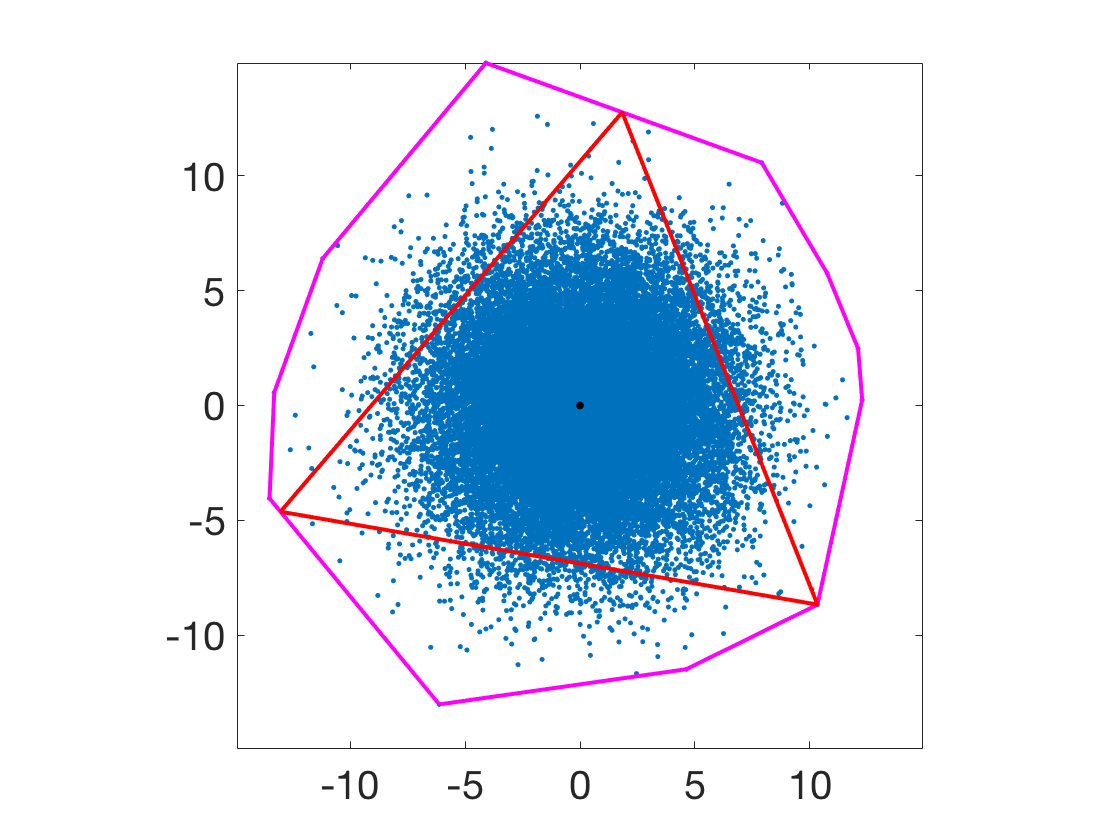}
\caption{{\bf (left)} $N=30,000$ random points sampled from a normal distribution. The red curve represents the initialization of the archetype polygon. {\bf (right)} The optimal solution of $A$ when $\alpha =0$. See \cref{sec:unbounded1}.} \label{fig:normal1}
\end{figure}

\begin{figure}[ht]
\centering
\includegraphics[width = 0.75\textwidth,clip,trim = 10cm 0cm 8cm 0cm]{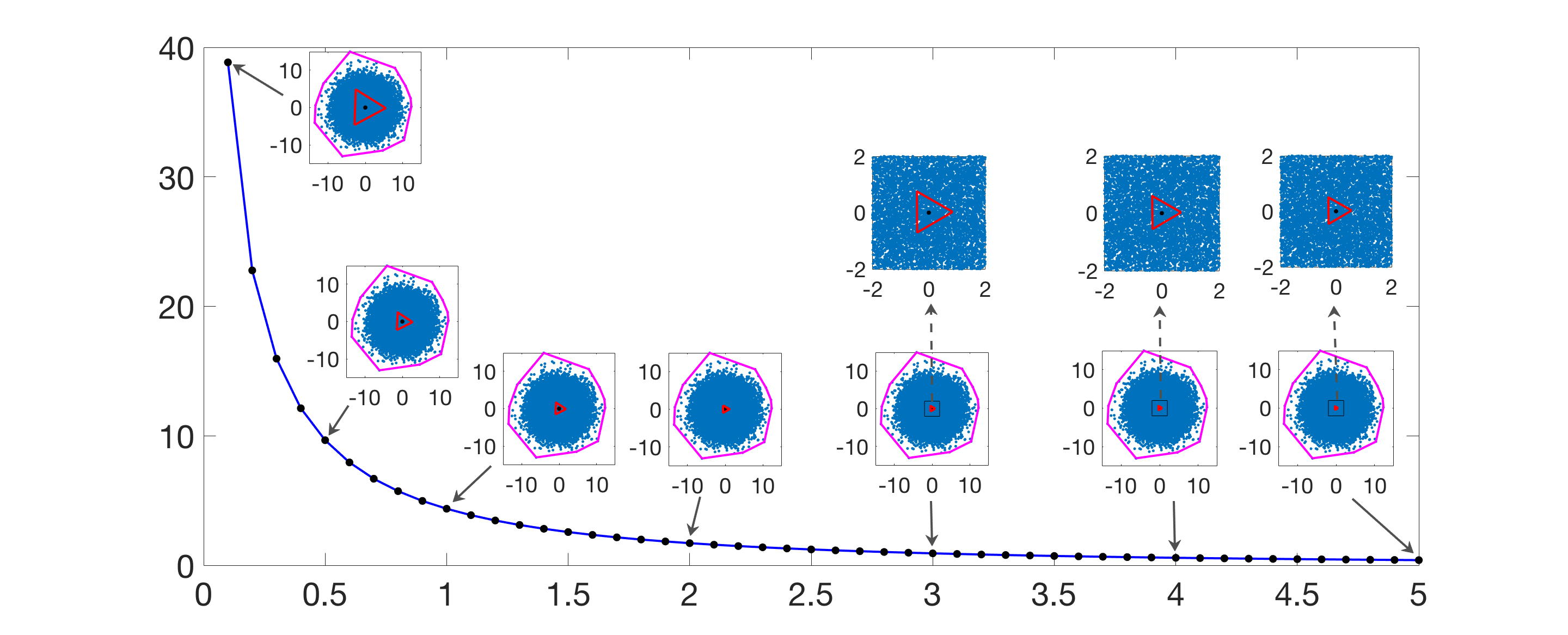}
\caption{The area of $\co(A)$ as we vary $\alpha$ with $N = 30,000$ random data points generated from a normal distribution.  See \cref{sec:unbounded1}.} 
\label{fig:normal2}
\end{figure}

\subsubsection{Example 3: Annular distribution} \label{sec:unbounded2}
In this experiment, we consider the case when the distribution is an annular distribution where $N$ random points $(x_i,y_i)$, $i\in[N]$ are generated by
\begin{equation} \label{eq:bimodalrandom}
\begin{cases} x_i =  \sqrt{|r_i|} \cos (2\pi\theta_i) \\
y_i = \sqrt{|r_i|}  \sin (2\pi\theta_i).
\end{cases}
\end{equation}
Here $\theta_i$ is a random number generated from the uniform distribution in $[0,1]$ (\ie,  $U([0,1])$) and $r_i$ is a random number generated from the normal distribution $\mathcal N(25, 50)$ (See \cref{fig:bimodal1} for an example with $N=30,000$). Note that the random points are concentrated around the circle with radius 5. 

\begin{figure}[ht!]
\centering
\includegraphics[width = 0.35\textwidth,clip,trim=6cm 1cm 6cm 2cm]{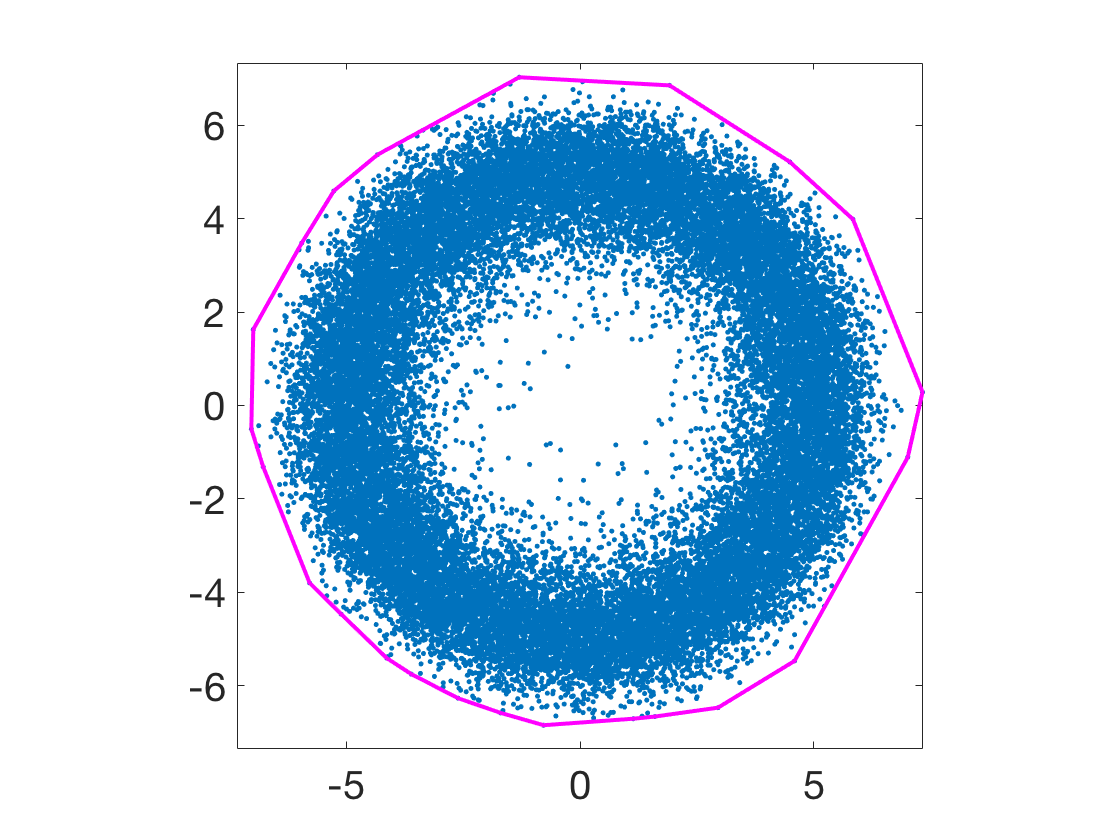}
\includegraphics[width = 0.35\textwidth,clip,trim=6cm 1cm 6cm 2cm]{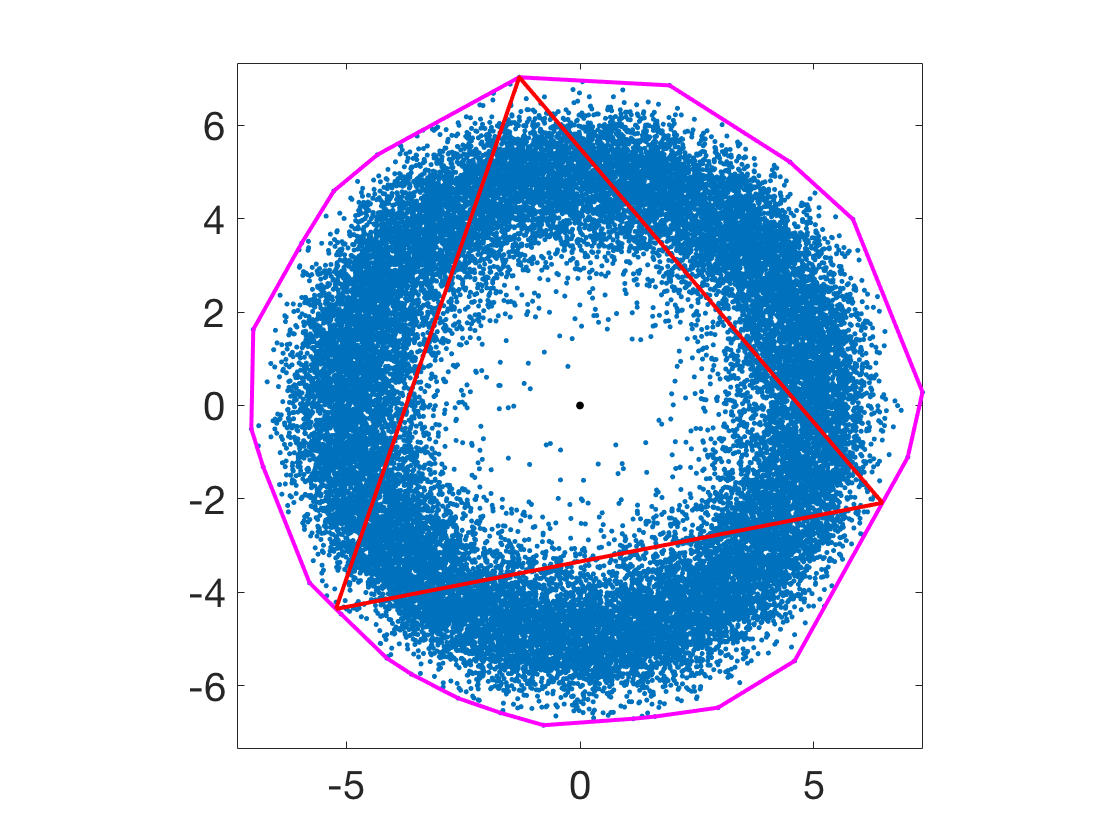}
\caption{{\bf (left)} $N=30,000$ random points generated from an annular distribution as in \eqref{eq:bimodalrandom}. The boundary of the convex hull is indicated. {\bf (right)} The optimal solution of \eqref{e:arch3} with $\alpha =0$. See \cref{sec:unbounded2}.} 
\label{fig:bimodal1}
\end{figure}

In \cref{fig:bimodal2}, we plot the area of $\co(A)$ as we vary $\alpha$. We observe that even for  random points generated from this annular distribution, as $\alpha \rightarrow 0$, $\co(A)$ shrinks to a regular triangle containing the mean of the distribution. 

\begin{figure}[ht]
\centering
\includegraphics[width = 0.75\textwidth,clip,trim = 10cm 0cm 8cm 0cm]{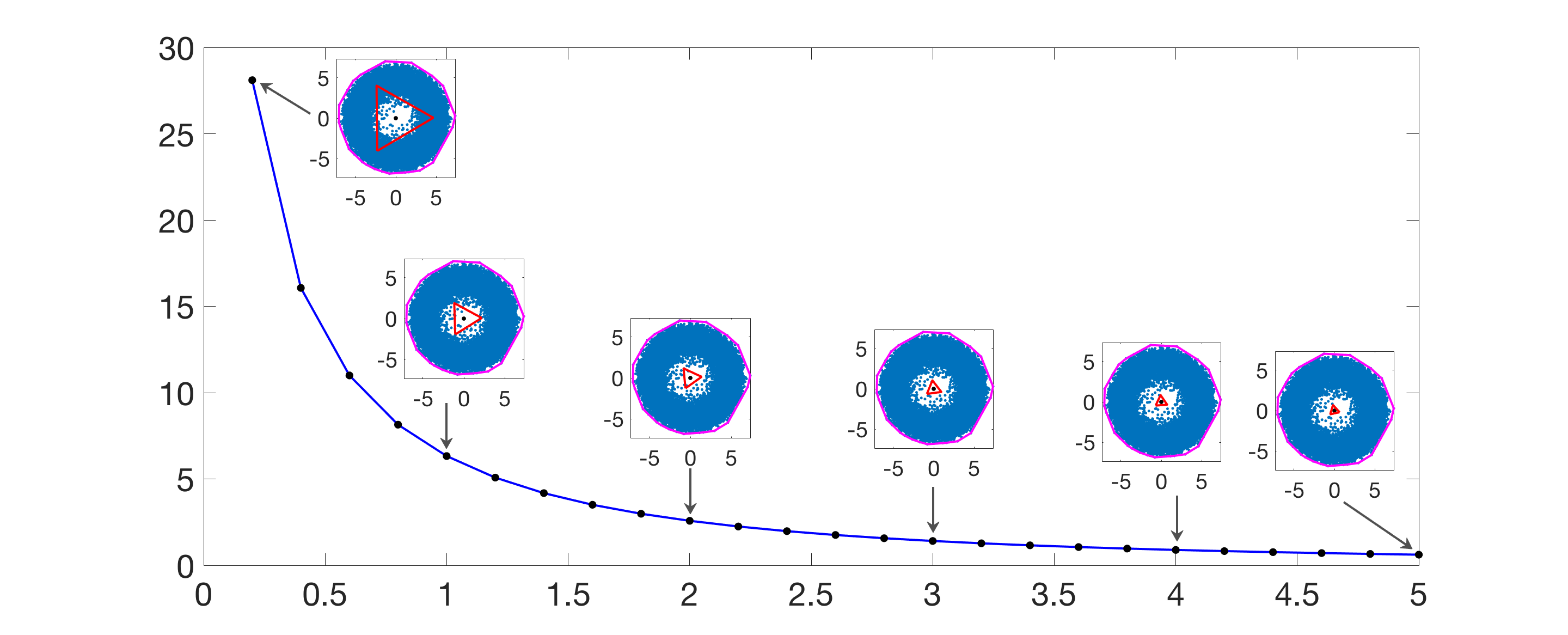}
\caption{The area of $\co(A)$ as we vary  $\alpha$ for $N = 30,000$ random points generated from an annular distribution.  See \cref{sec:unbounded2}.} \label{fig:bimodal2}
\end{figure}

To study how the algorithm depends on the initialization, we fix $\alpha = 2$ and choose several different initializations for $\{a_\ell\}_{\ell \in [3]}$. \cref{fig:bimodal3} displays selected snapshots of the iterations for different initializations.  The final  $\co(A)$ is more or less independent of the initialization, which indicates the robustness of the algorithm.
 
\begin{figure}[ht]
\centering
\includegraphics[width = 0.12\textwidth,clip,trim=6cm 0cm 8cm 0cm]{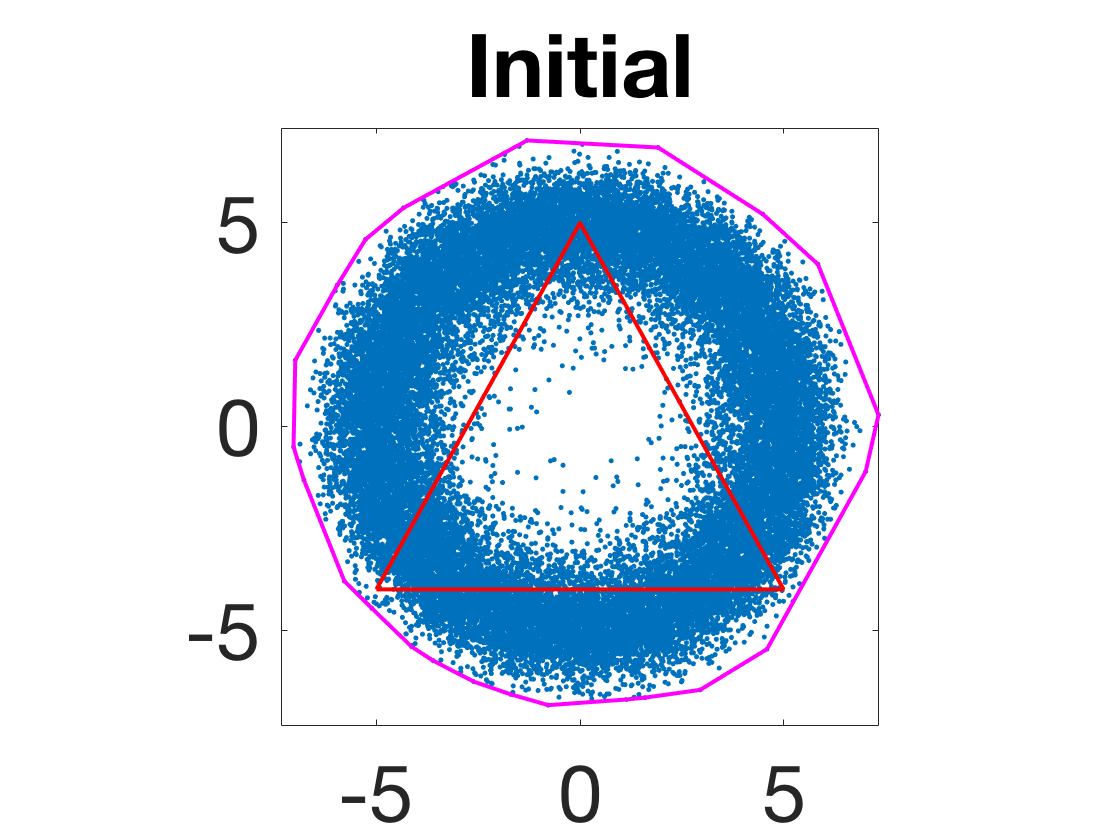}
\includegraphics[width = 0.12\textwidth,clip,trim=6cm 0cm 8cm 0cm]{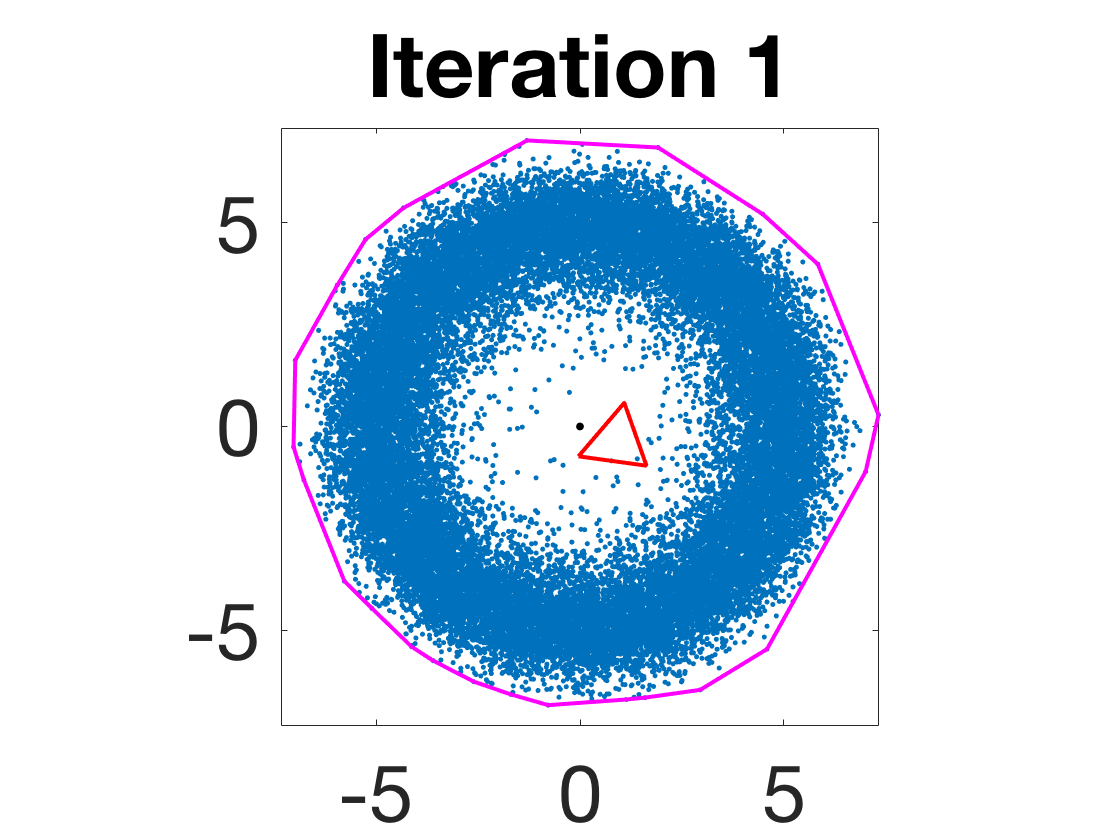}
\includegraphics[width = 0.12\textwidth,clip,trim=6cm 0cm 8cm 0cm]{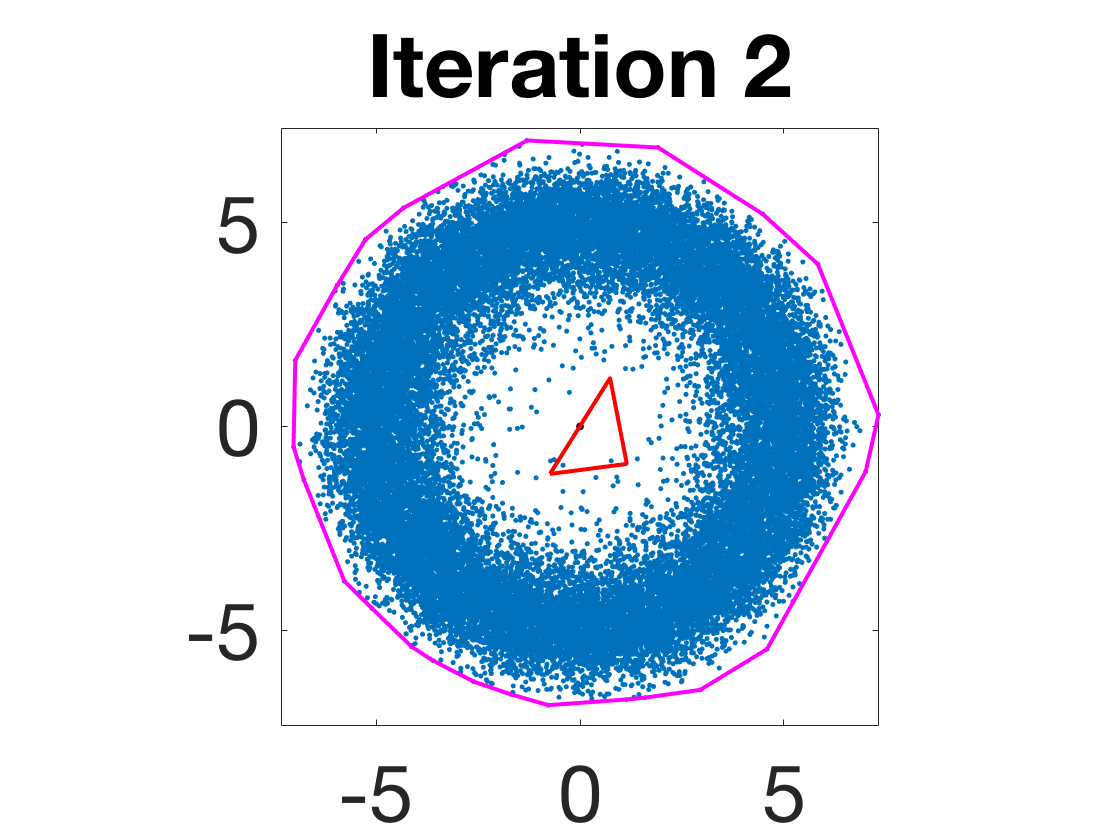}
\includegraphics[width = 0.12\textwidth,clip,trim=6cm 0cm 8cm 0cm]{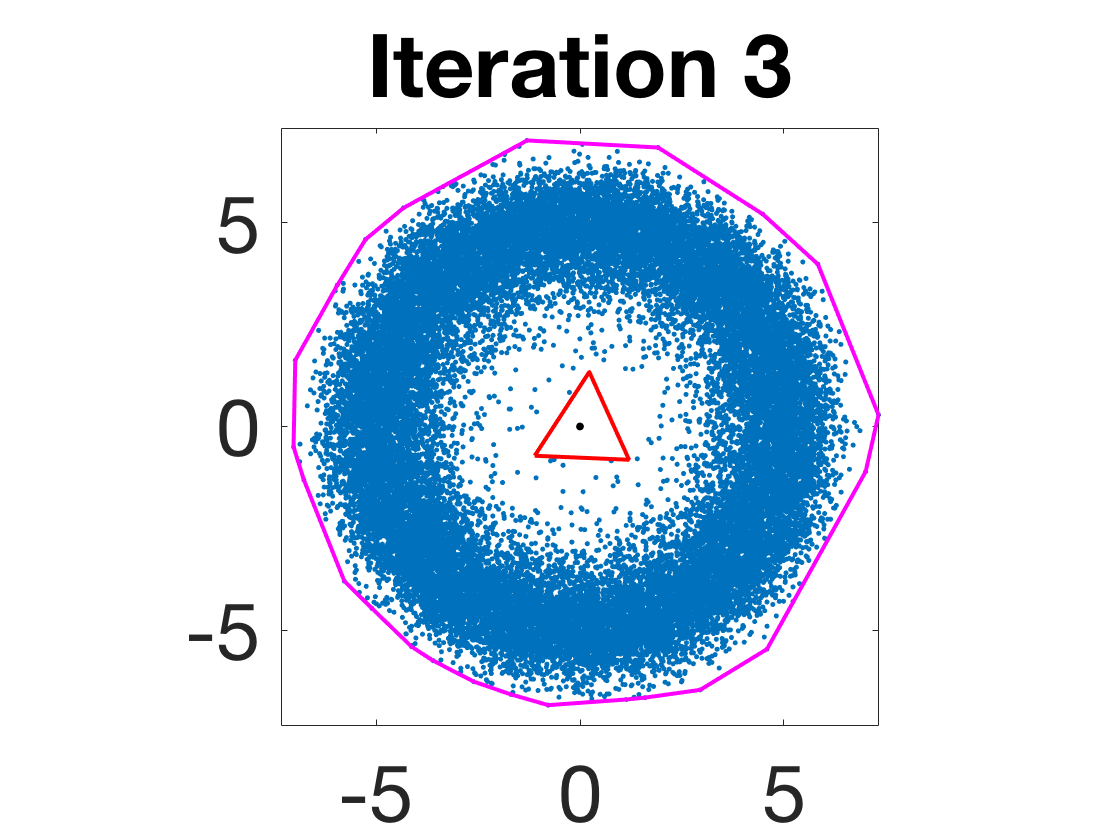}
\includegraphics[width = 0.12\textwidth,clip,trim=6cm 0cm 8cm 0cm]{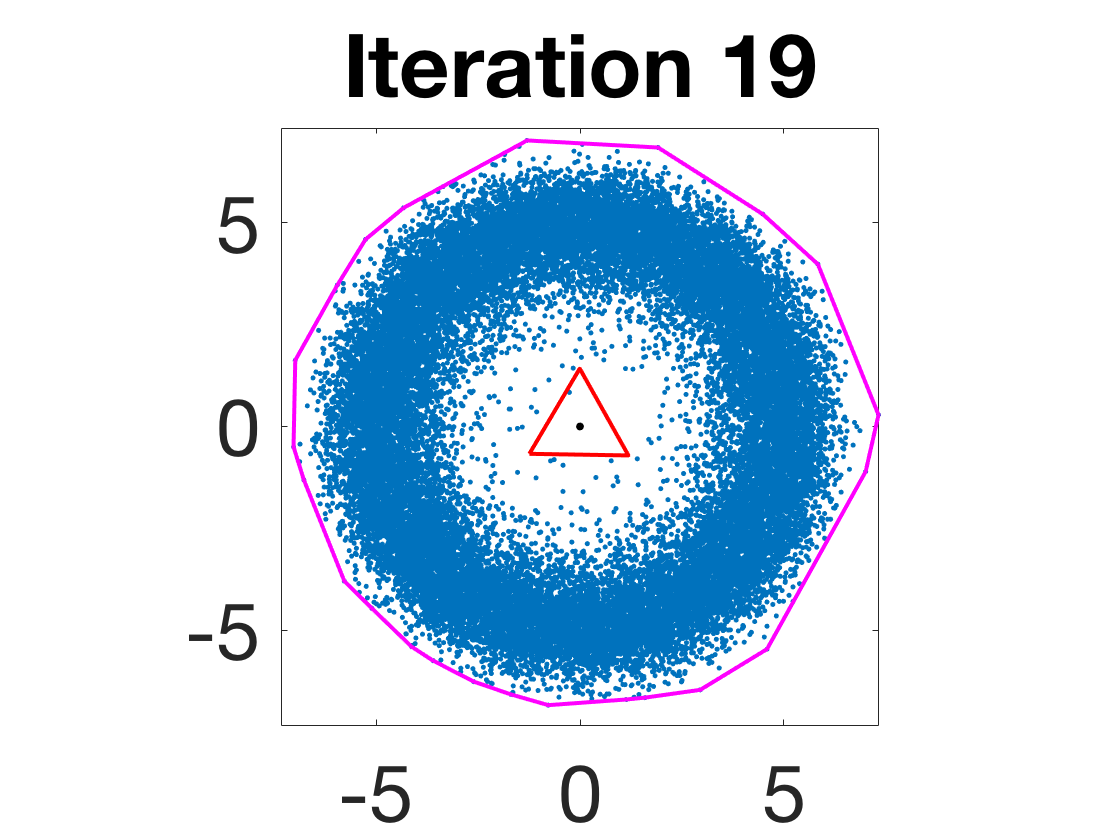}
\includegraphics[width = 0.12\textwidth,clip,trim=6cm 0cm 8cm 0cm]{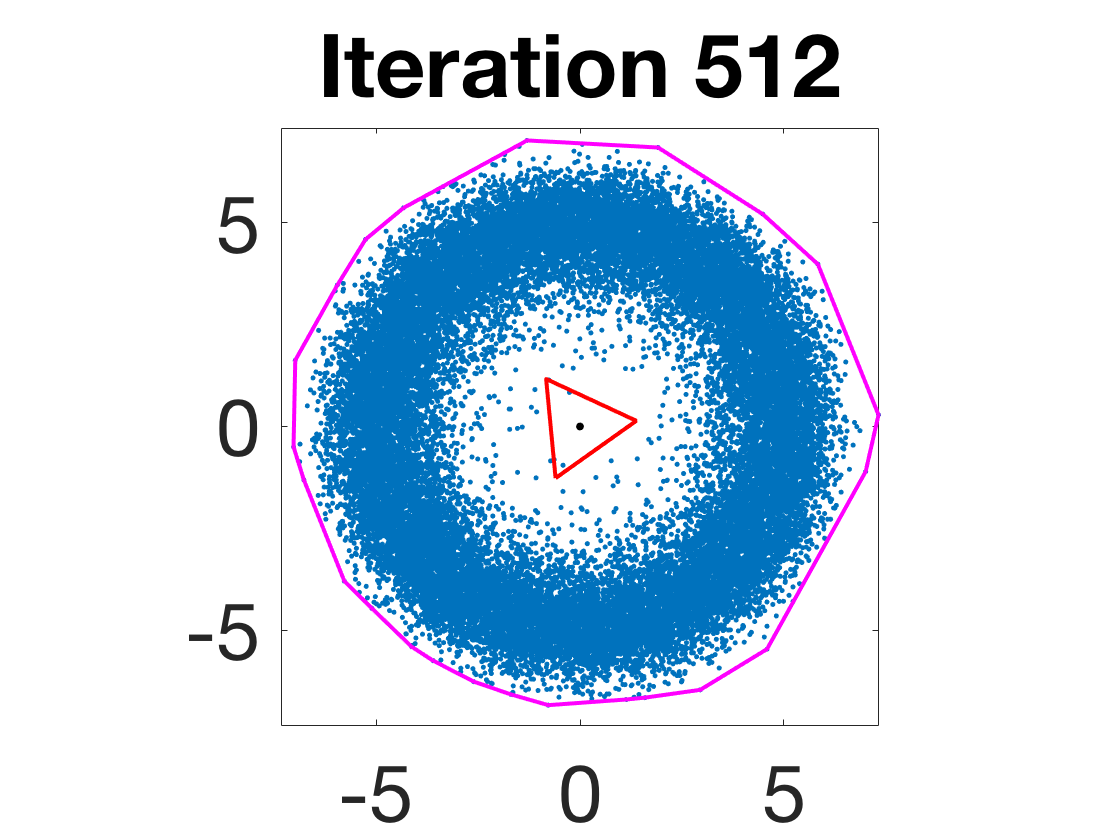}\\
\includegraphics[width = 0.12\textwidth,clip,trim=6cm 0cm 8cm 0cm]{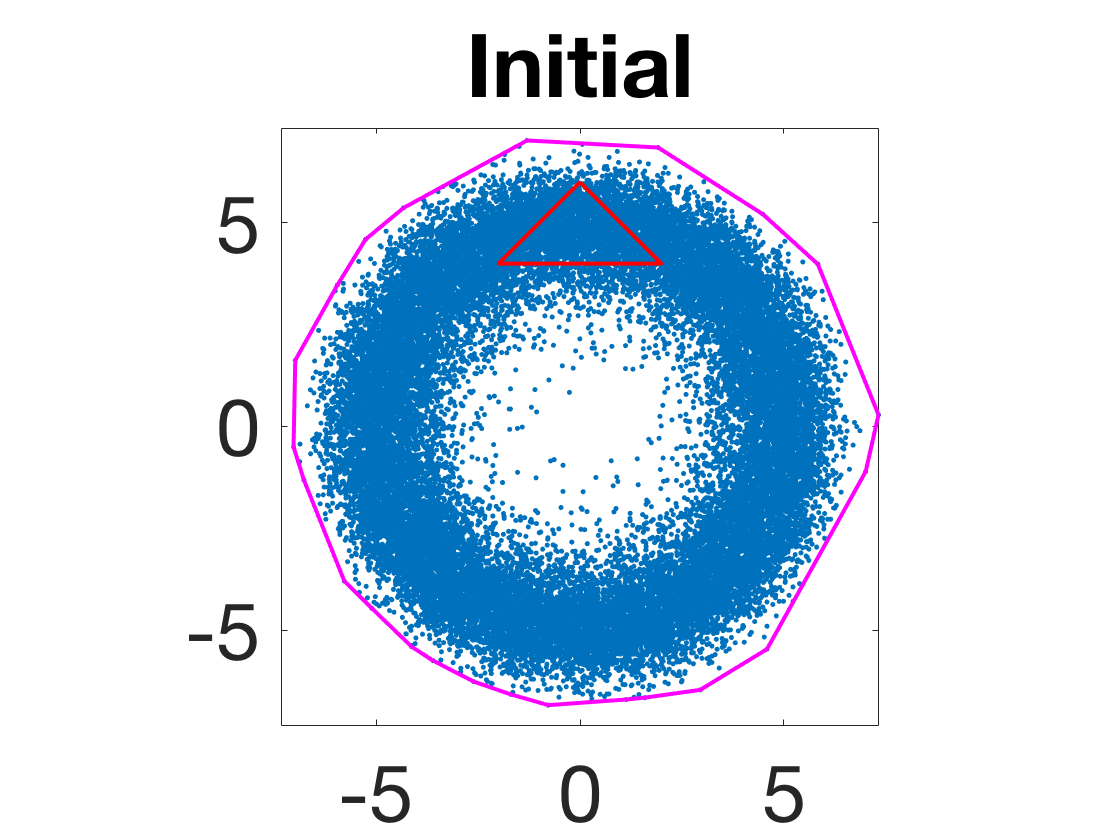}
\includegraphics[width = 0.12\textwidth,clip,trim=6cm 0cm 8cm 0cm]{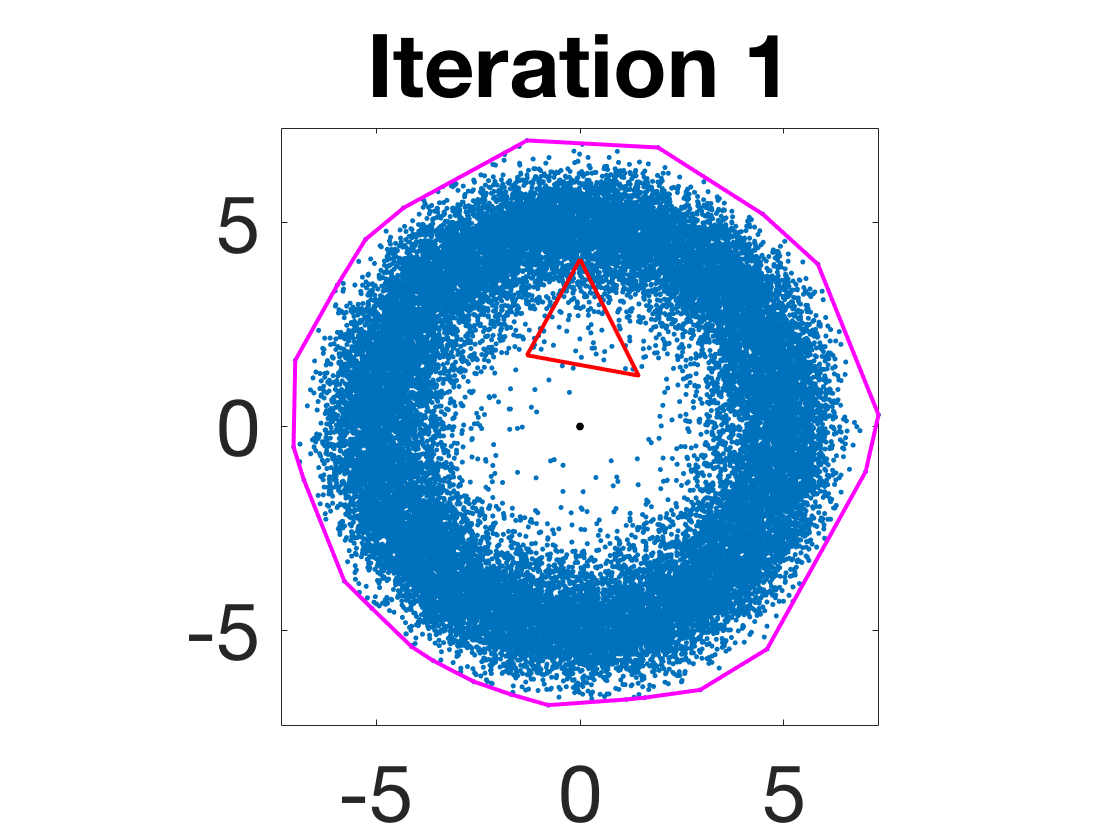}
\includegraphics[width = 0.12\textwidth,clip,trim=6cm 0cm 8cm 0cm]{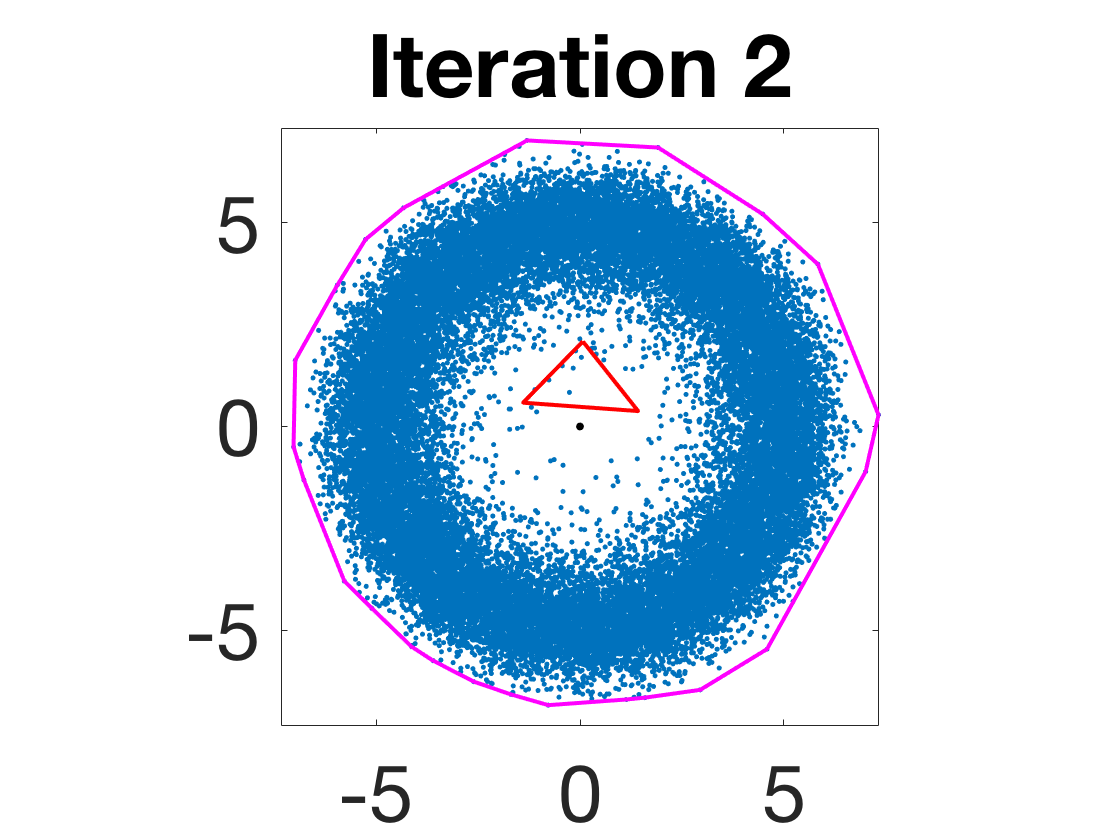}
\includegraphics[width = 0.12\textwidth,clip,trim=6cm 0cm 8cm 0cm]{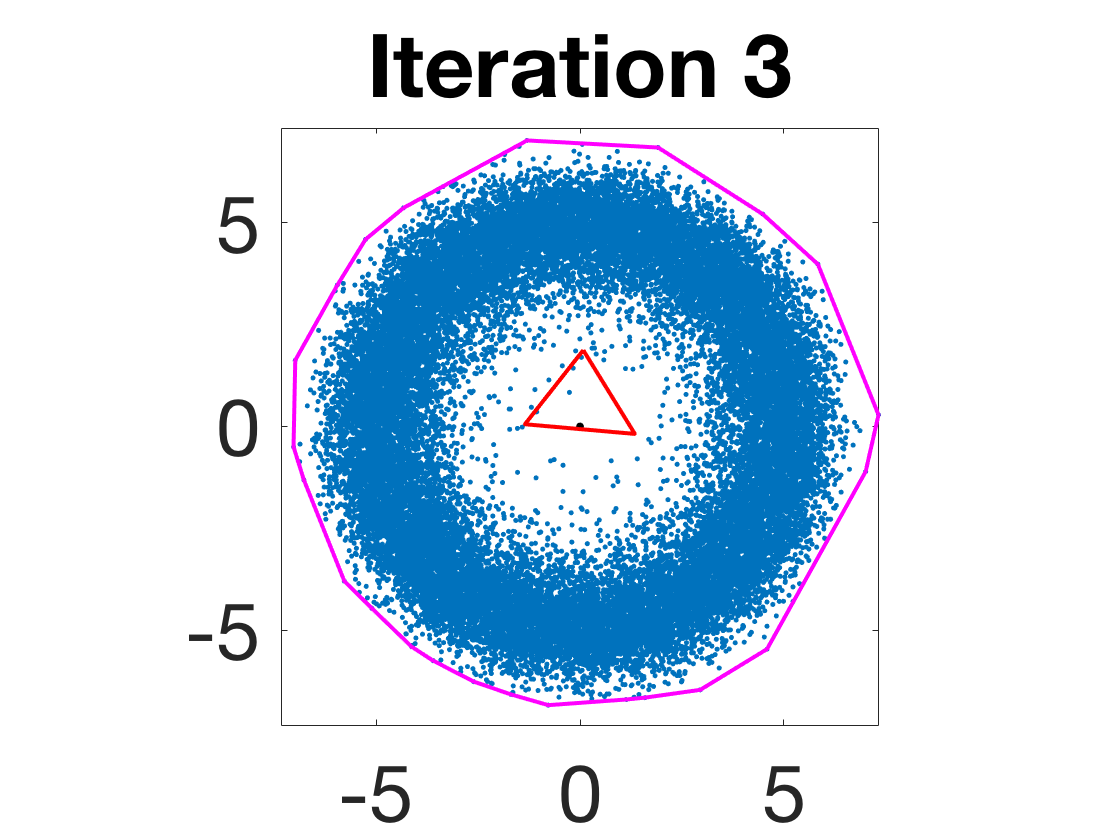}
\includegraphics[width = 0.12\textwidth,clip,trim=6cm 0cm 8cm 0cm]{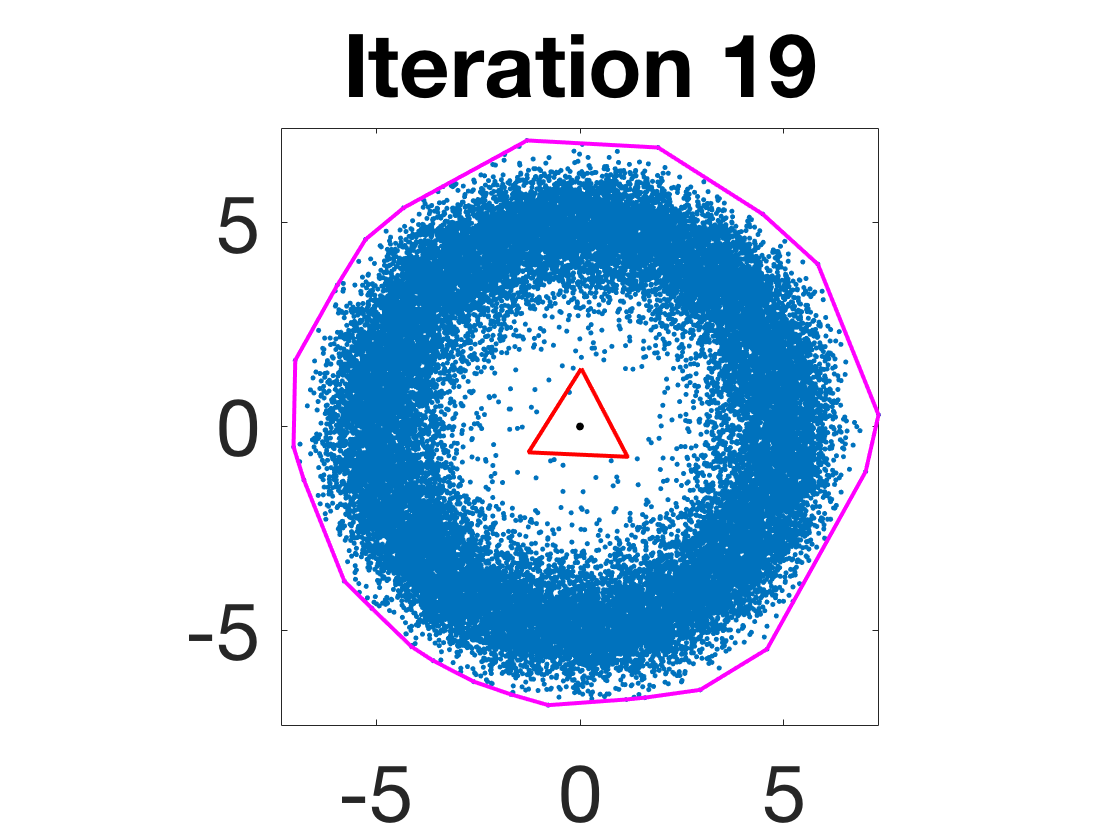}
\includegraphics[width = 0.12\textwidth,clip,trim=6cm 0cm 8cm 0cm]{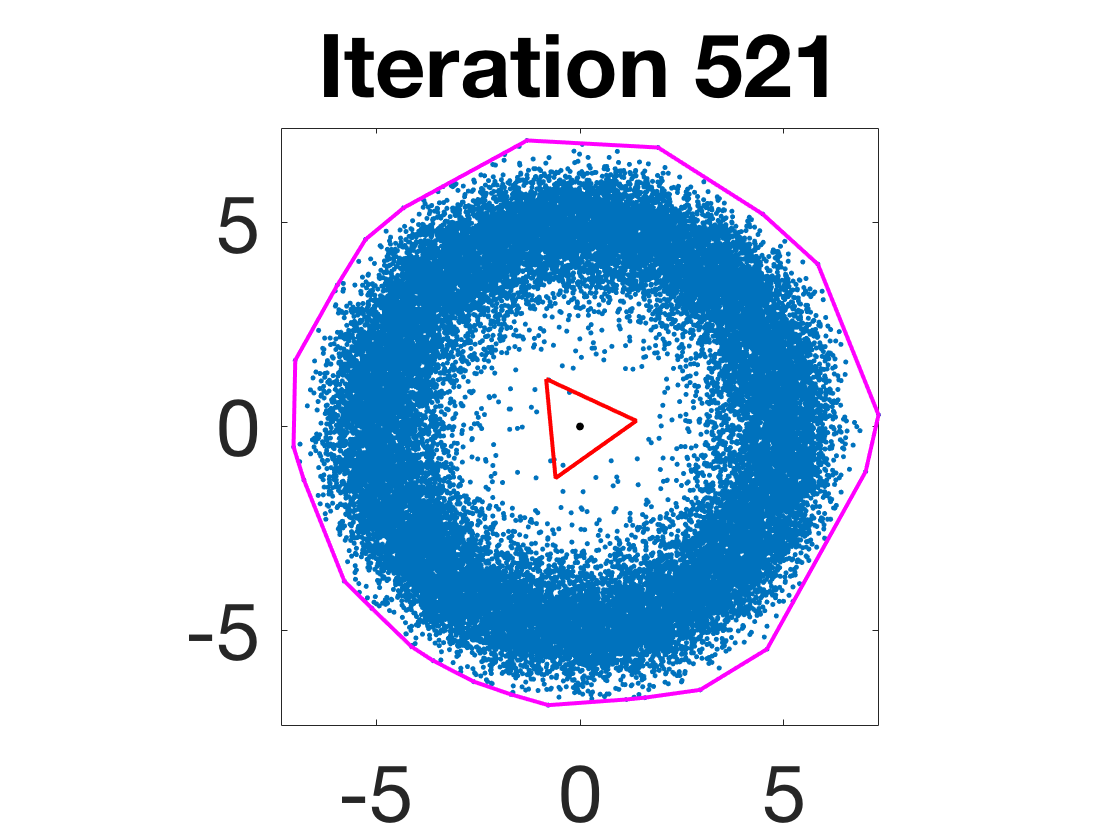}\\
\includegraphics[width = 0.12\textwidth,clip,trim=6cm 0cm 8cm 0cm]{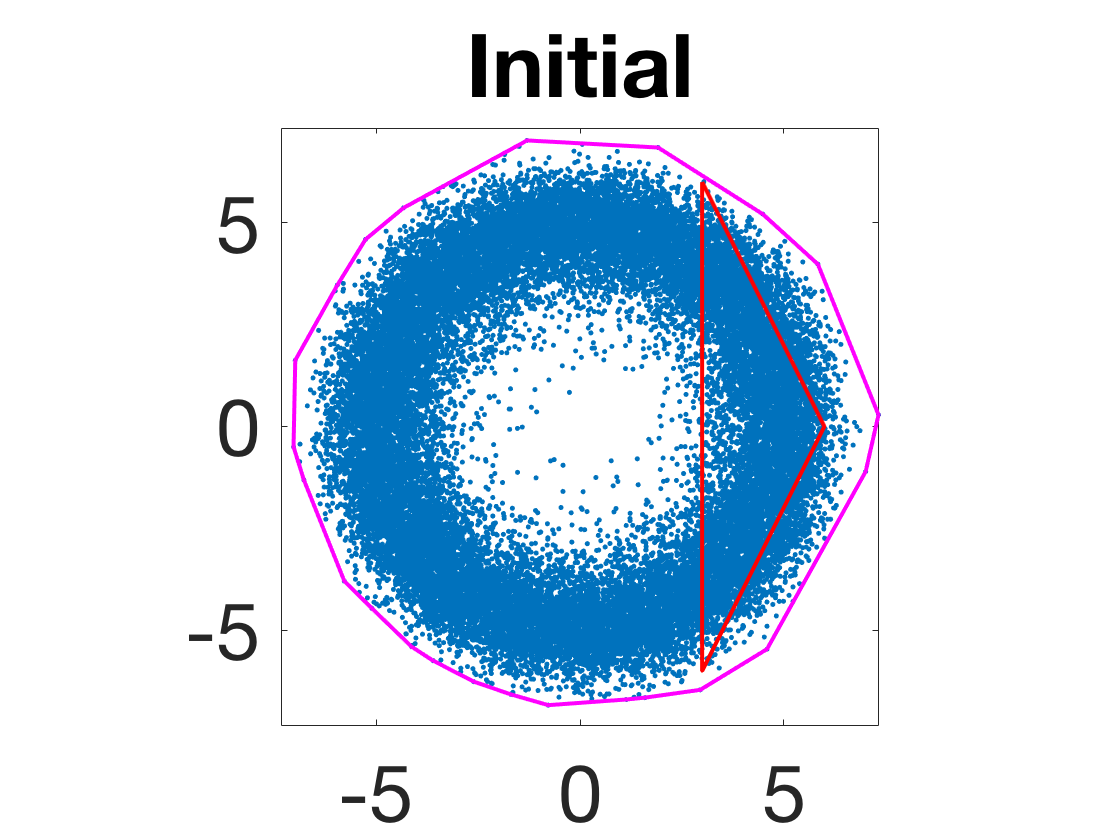}
\includegraphics[width = 0.12\textwidth,clip,trim=6cm 0cm 8cm 0cm]{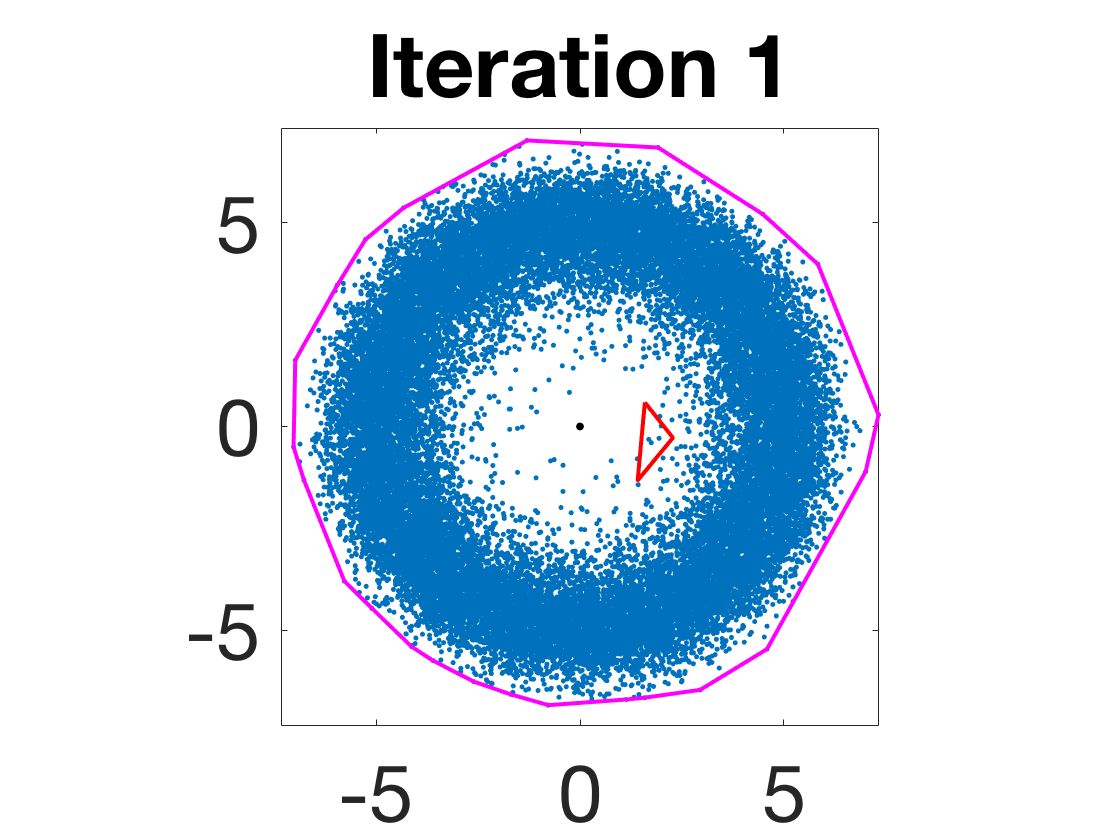}
\includegraphics[width = 0.12\textwidth,clip,trim=6cm 0cm 8cm 0cm]{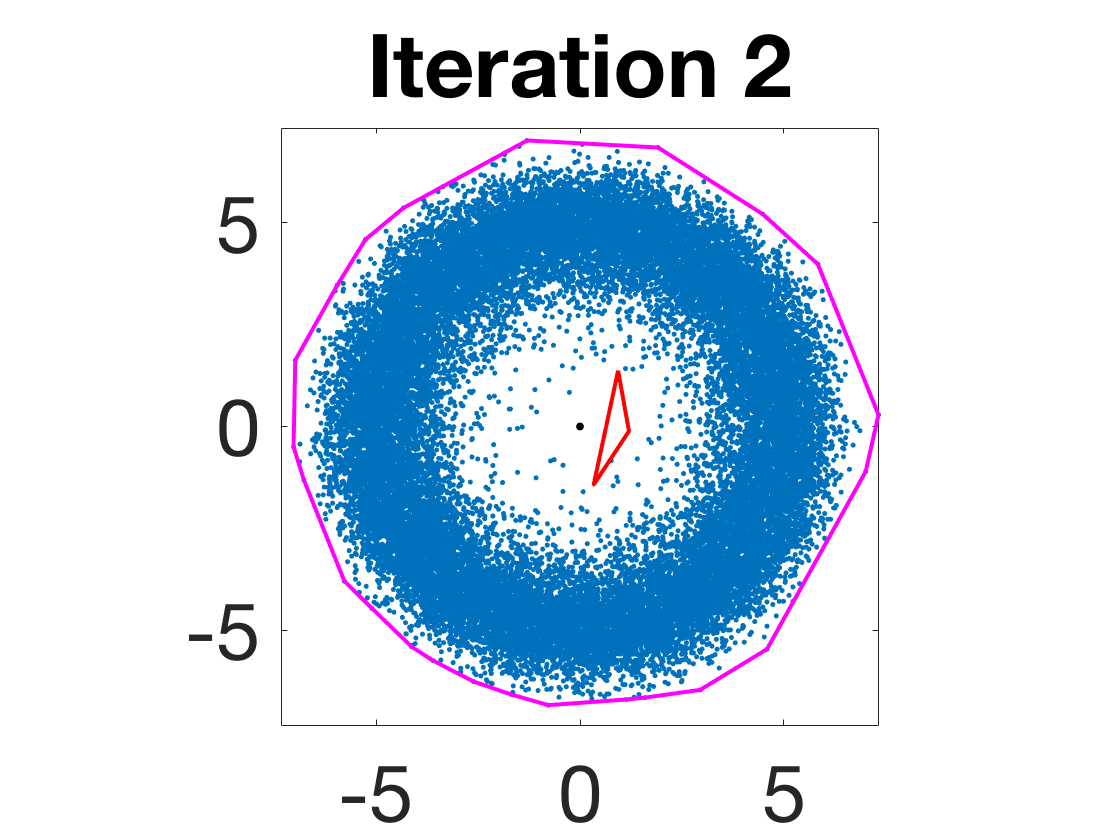}
\includegraphics[width = 0.12\textwidth,clip,trim=6cm 0cm 8cm 0cm]{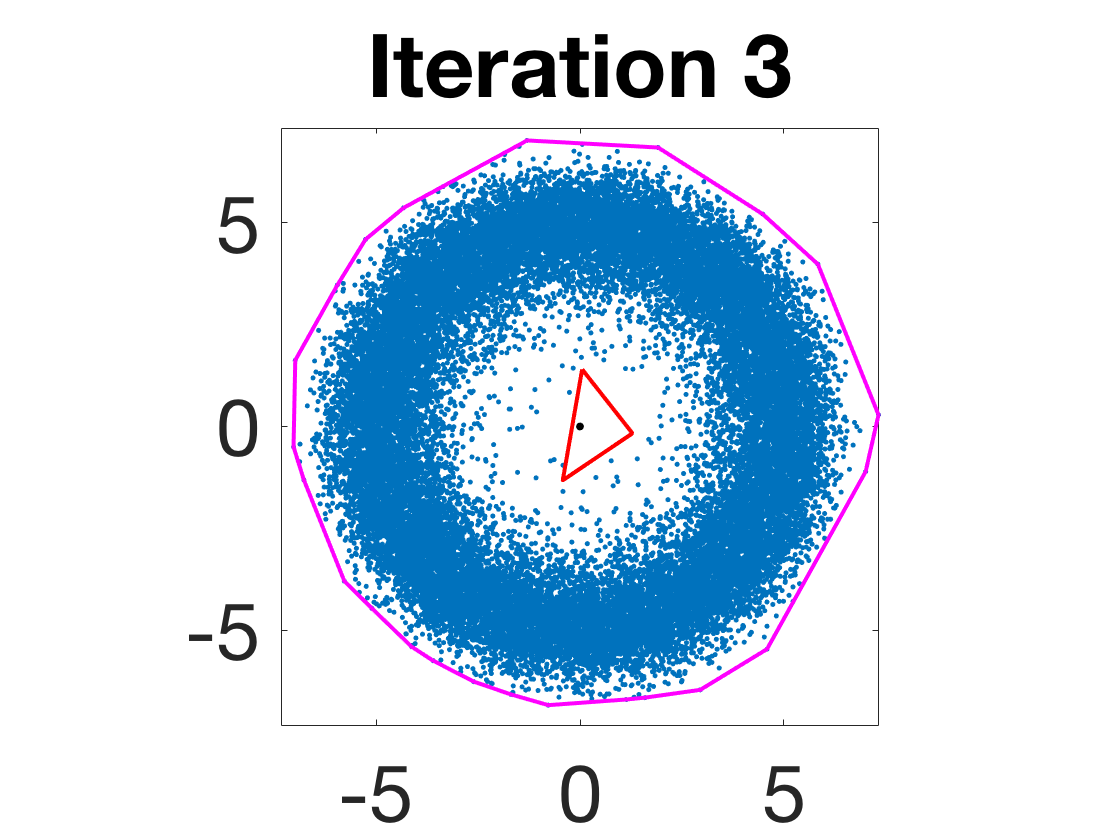}
\includegraphics[width = 0.12\textwidth,clip,trim=6cm 0cm 8cm 0cm]{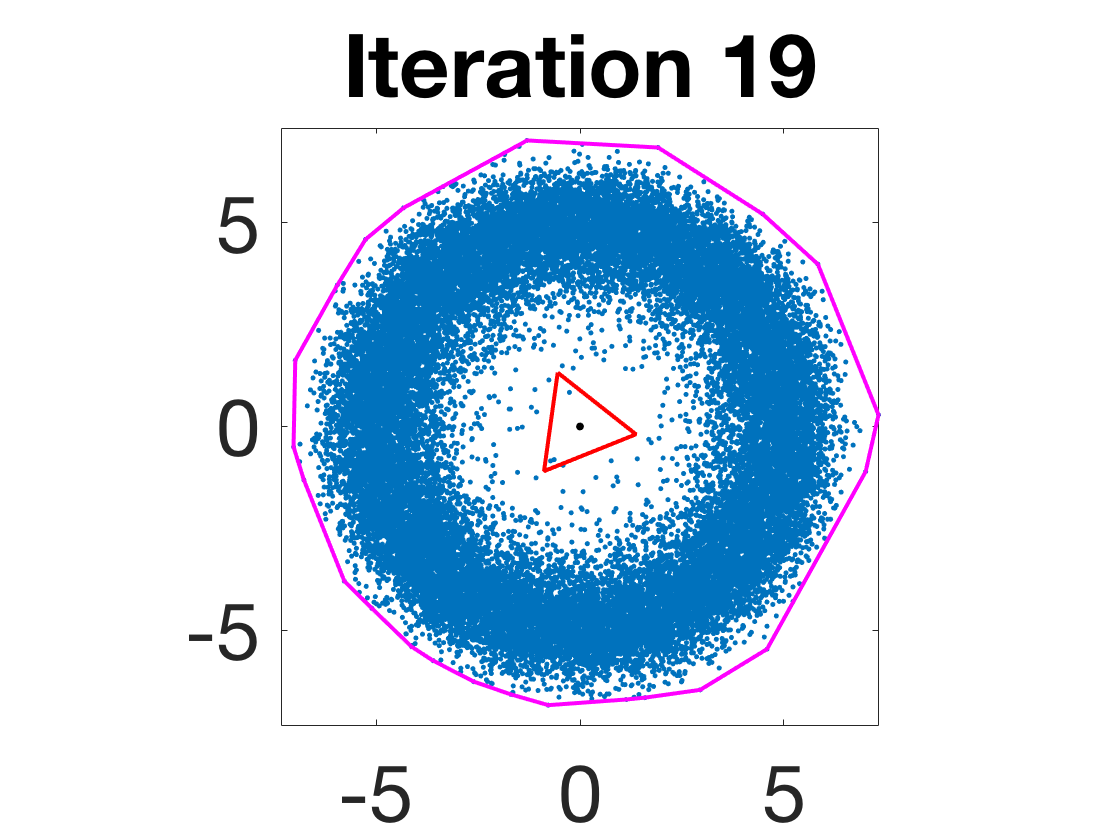}
\includegraphics[width = 0.12\textwidth,clip,trim=6cm 0cm 8cm 0cm]{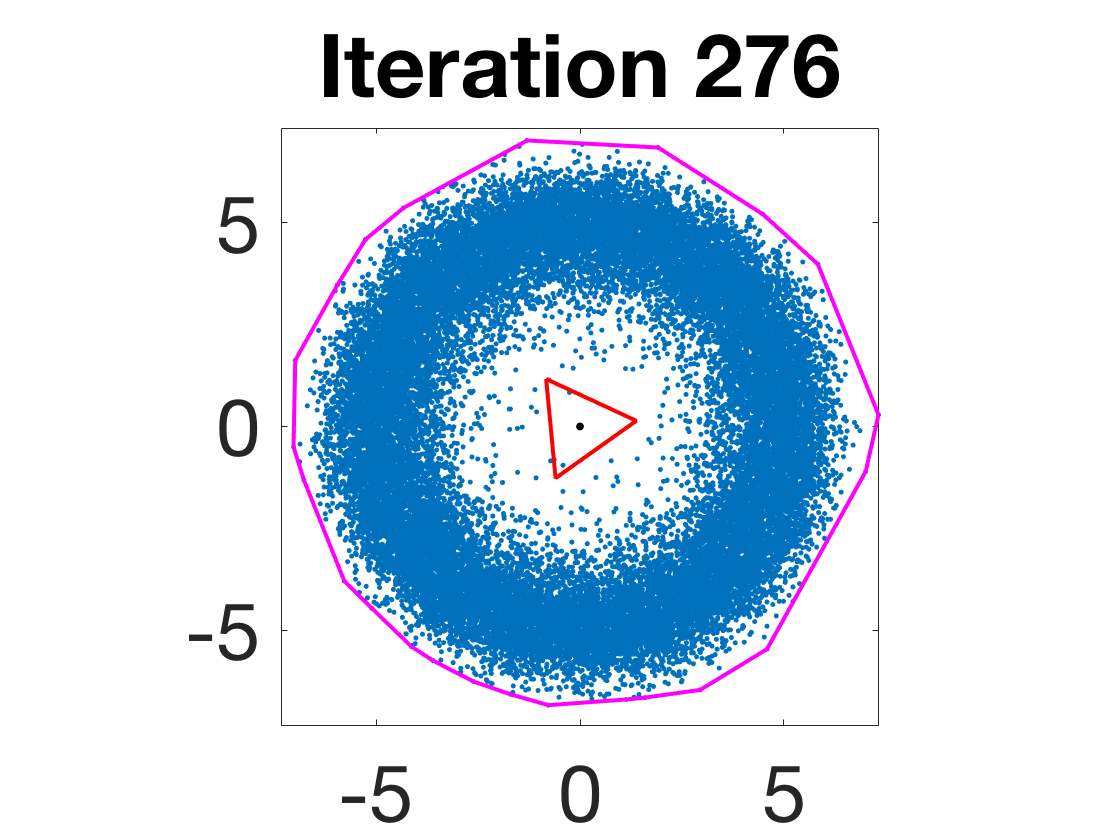}
\caption{Snapshots of the iterates for different initializations. Last column is the final iterate. See \cref{sec:unbounded2}.} \label{fig:bimodal3}
\end{figure}

\subsubsection{Example 4: A Gaussian mixture model} \label{sec:unbounded3} 
In this experiment, we consider an asymmetric distribution where $N = 30,000$ random data points are generated from the mixed Gaussian model with two normal distributions $\mathcal N(\mu_1,\Sigma_1)$ and $\mathcal N(\mu_2,\Sigma_2)$ with $\mu_1 = \begin{pmatrix} 8 \\ 8  \end{pmatrix}$, $\mu_2 = \begin{pmatrix}-8 \\ -8 \end{pmatrix}$, $\Sigma_1 = 9 \cdot I $, and varying  $\Sigma_2$. We note that the mean of this bimodal distribution is $\begin{pmatrix} 0 \\ 0  \end{pmatrix}$. In this experiment, $\alpha =2$.
In the first two rows of \cref{fig:bimodal4}, we display  snapshots of iterations for the data randomly generated from the Gaussian mixture model with $\Sigma_2 = 4 \cdot I $ for two different initializations. We observe that the iterates converge to a triangle containing the data mean in just a few iterations. In the last two rows of \cref{fig:bimodal4}, we change $\Sigma_2 $ to $1 \cdot I $ and $0.25 \cdot I $  to increase the ``anisotropy''. After about 20 iterations, the solution also converges to a triangle containing the mean $\begin{pmatrix} 0 \\ 0 \end{pmatrix}$. In this experiment, we note that, because the ``anisotropy'' of the random data, $\co(A)$ is not a regular triangle, but contains the mean. This numerically supports the claim \cref{t:ConsVarReg-alpha} that penalization makes the archetype points tend towards the mean.

\begin{figure}[ht!]
\centering
\includegraphics[width = 0.12\textwidth,clip,trim=2.5cm 0cm 3.5cm 0cm]{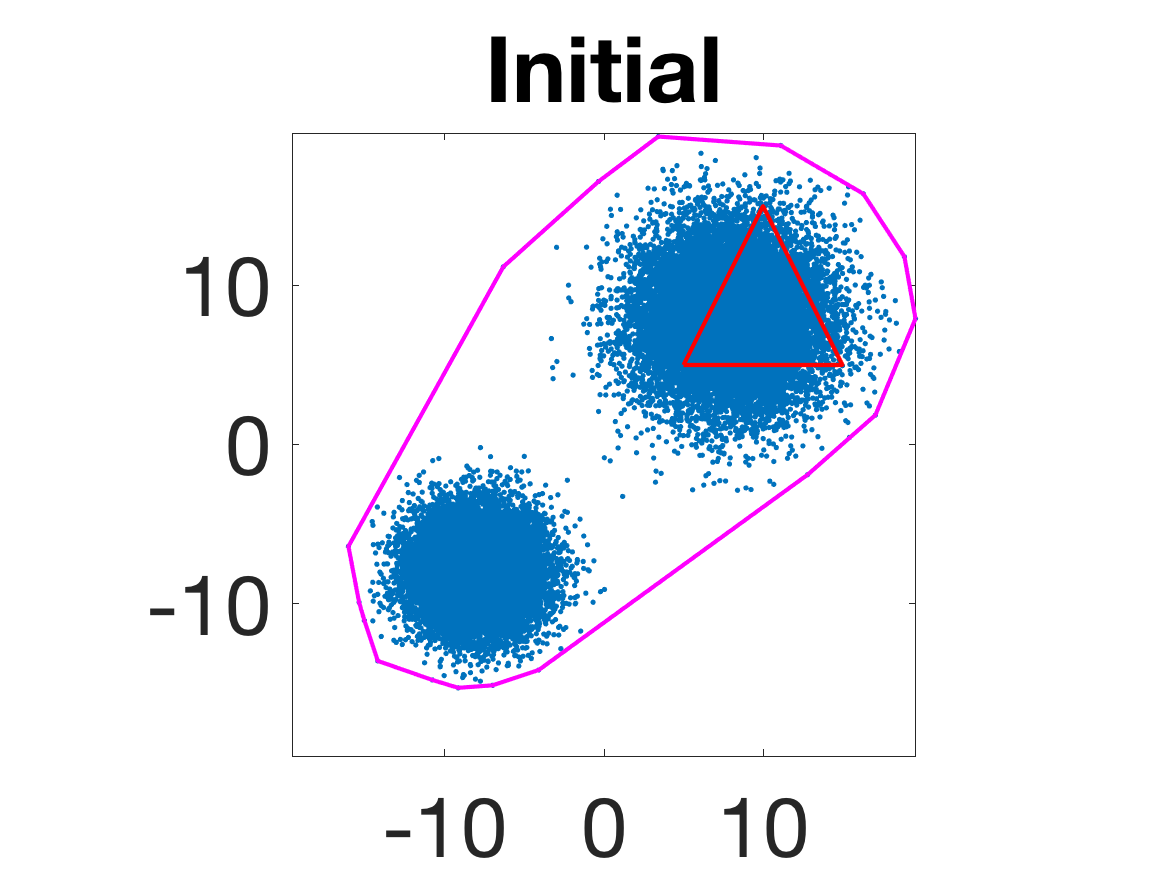}
\includegraphics[width = 0.12\textwidth,clip,trim=2.5cm 0cm 3.5cm 0cm]{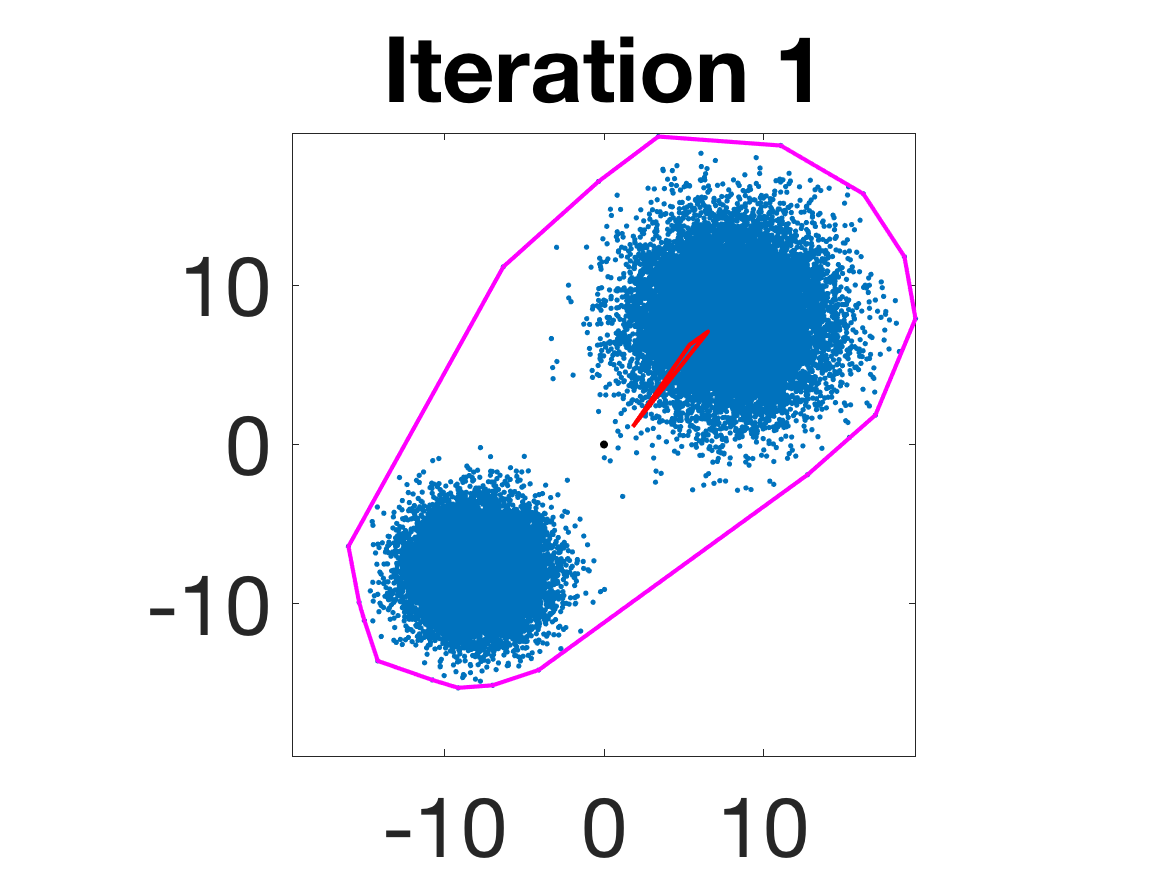}
\includegraphics[width = 0.12\textwidth,clip,trim=2.5cm 0cm 3.5cm 0cm]{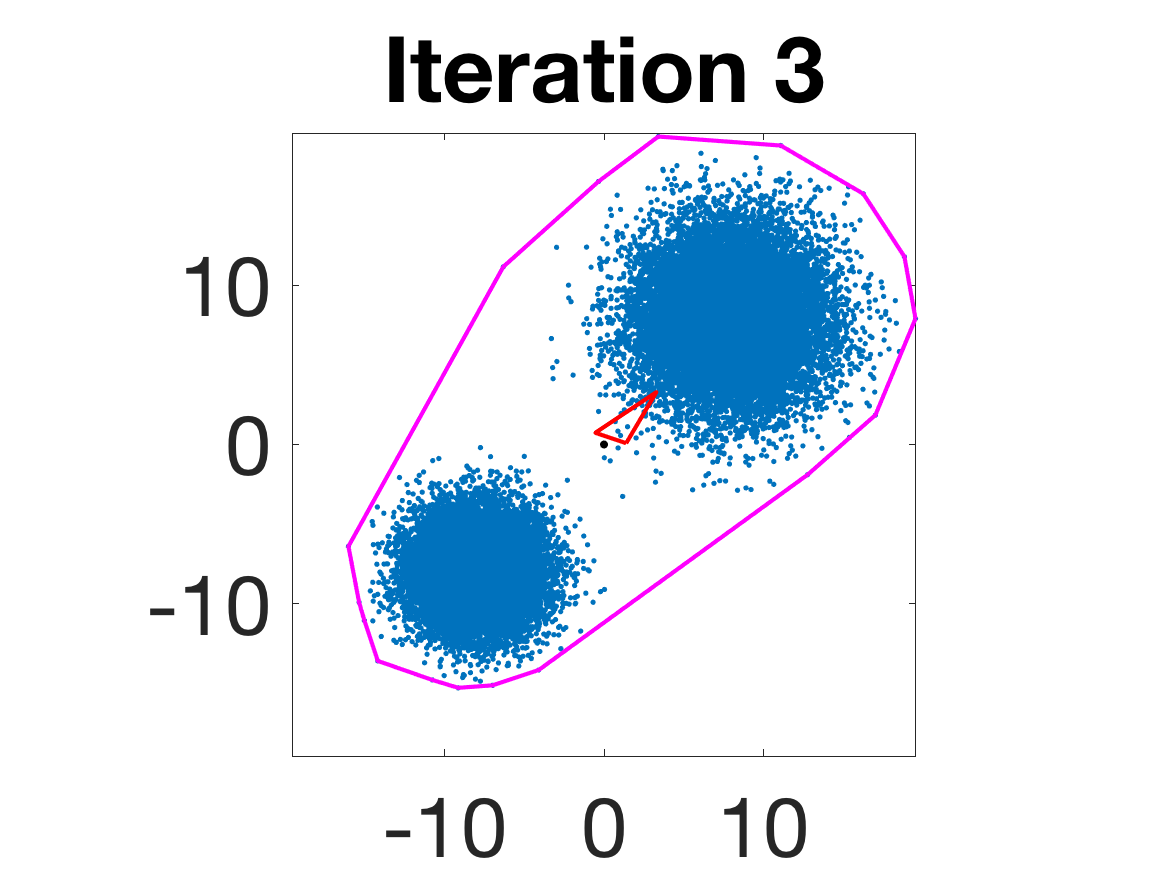}
\includegraphics[width = 0.12\textwidth,clip,trim=2.5cm 0cm 3.5cm 0cm]{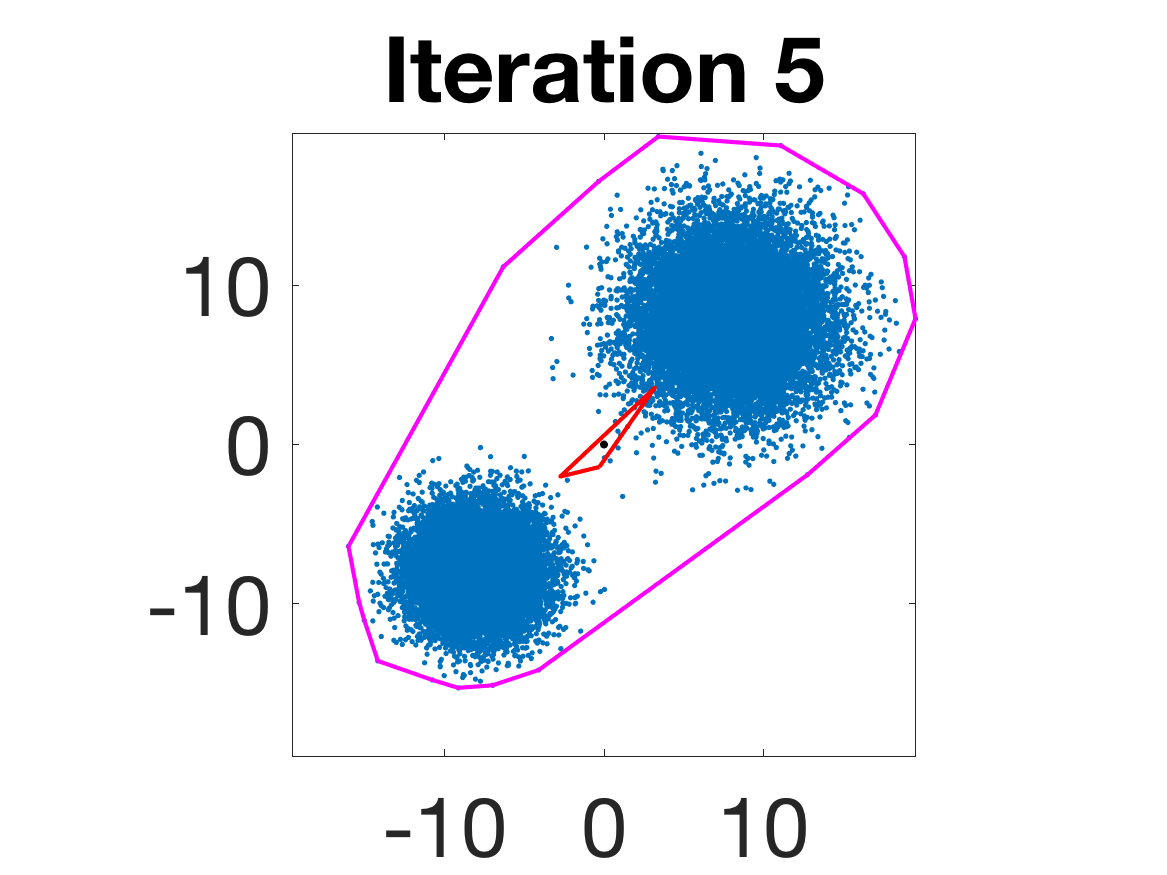}
\includegraphics[width = 0.12\textwidth,clip,trim=2.5cm 0cm 3.5cm 0cm]{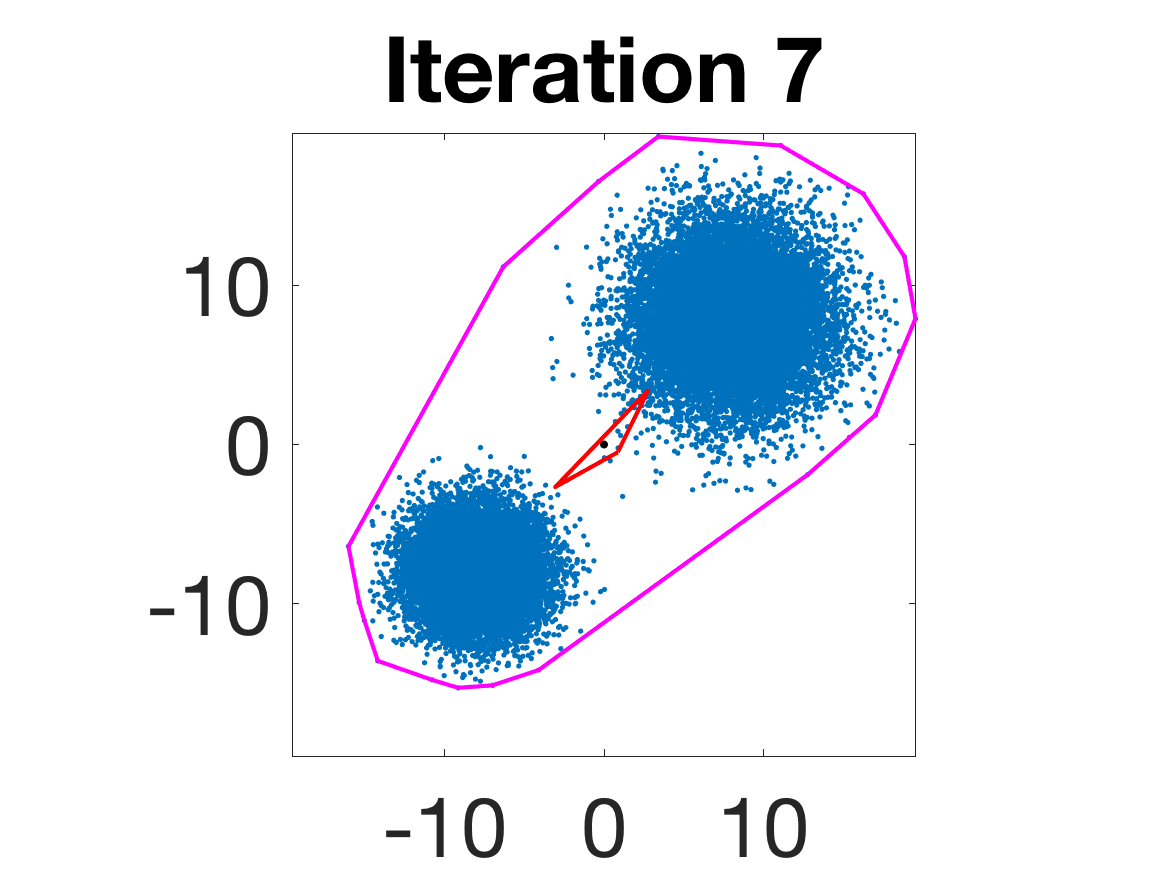}
\includegraphics[width = 0.12\textwidth,clip,trim=2.5cm 0cm 3.5cm 0cm]{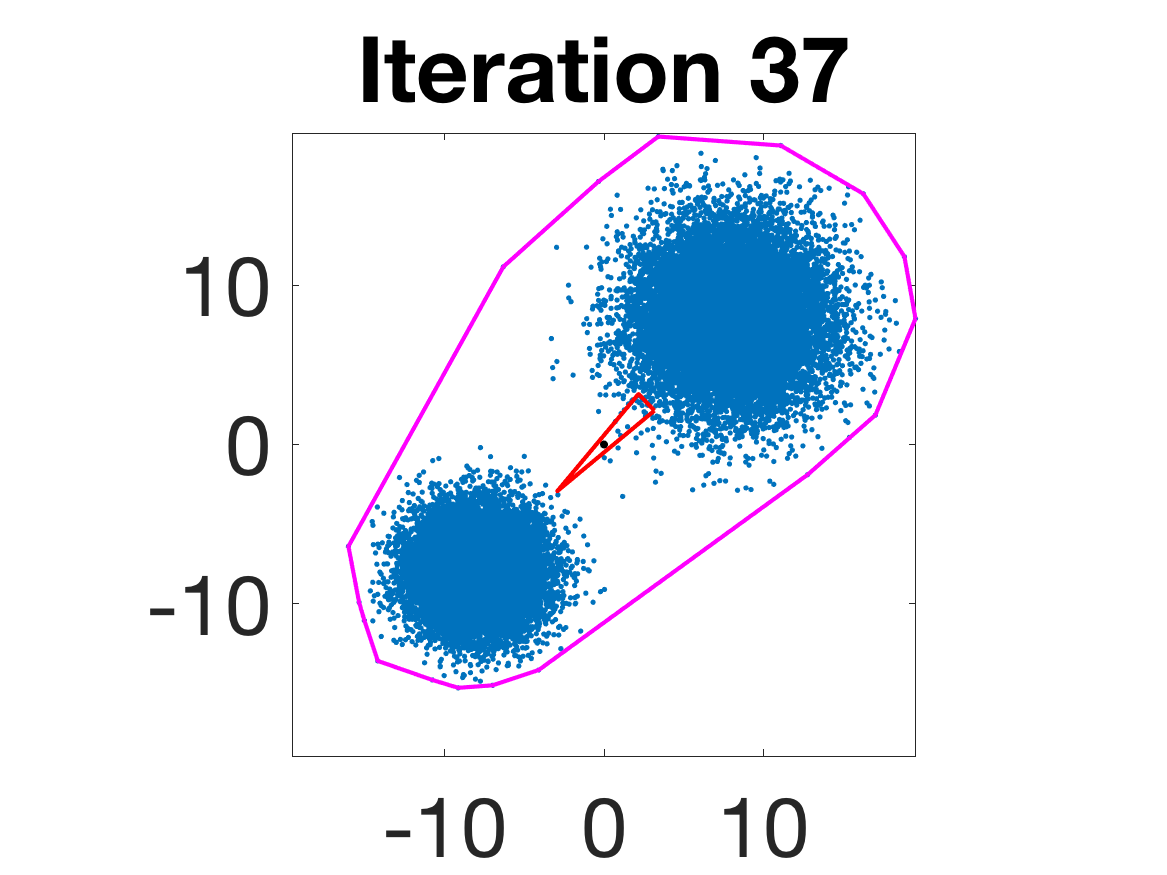}\\
\includegraphics[width = 0.12\textwidth,clip,trim=2.5cm 0cm 3.5cm 0cm]{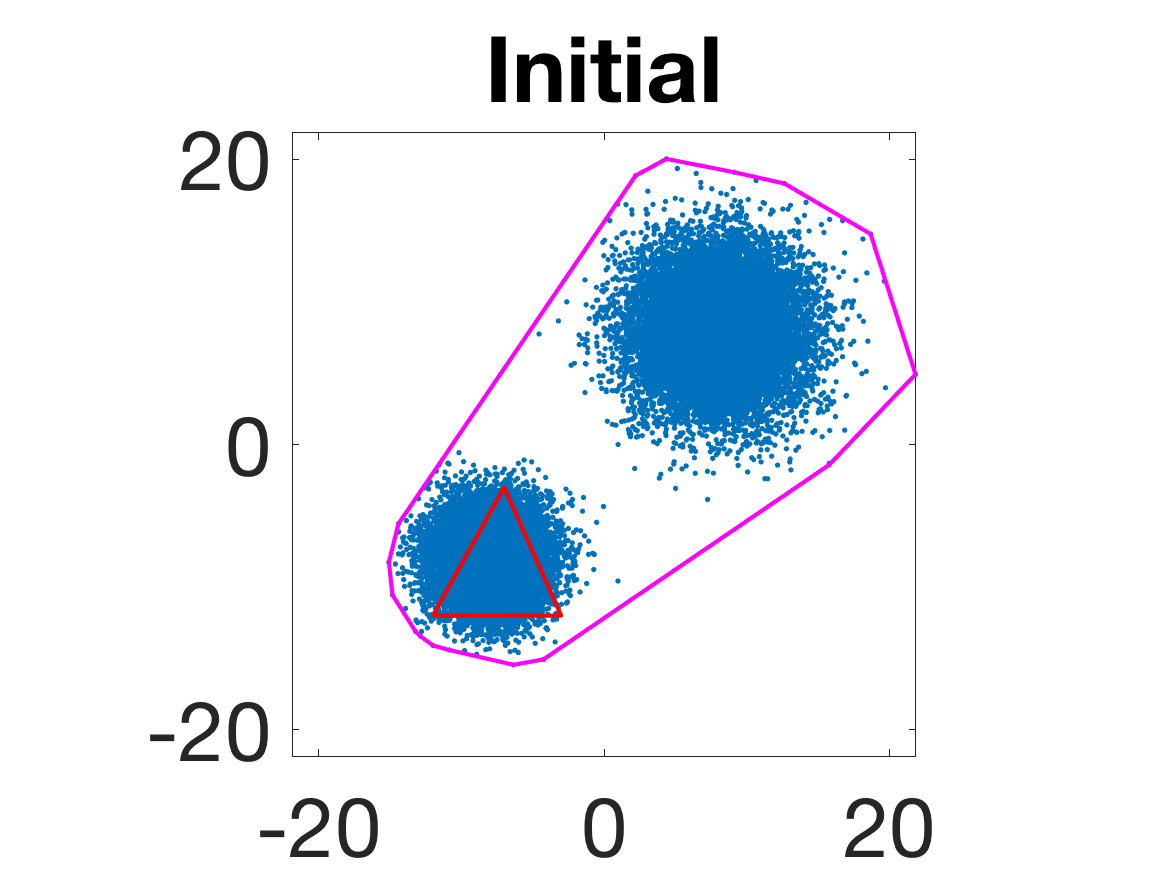}
\includegraphics[width = 0.12\textwidth,clip,trim=2.5cm 0cm 3.5cm 0cm]{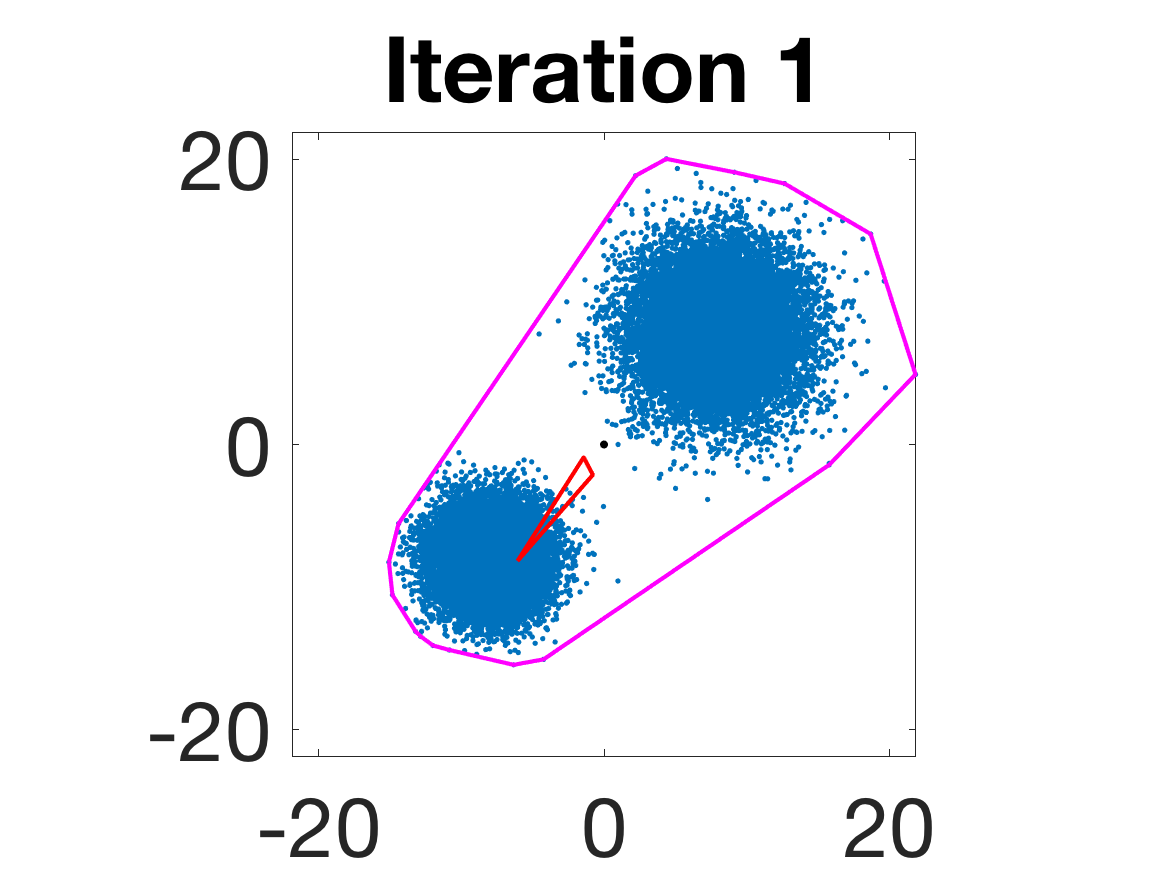}
\includegraphics[width = 0.12\textwidth,clip,trim=2.5cm 0cm 3.5cm 0cm]{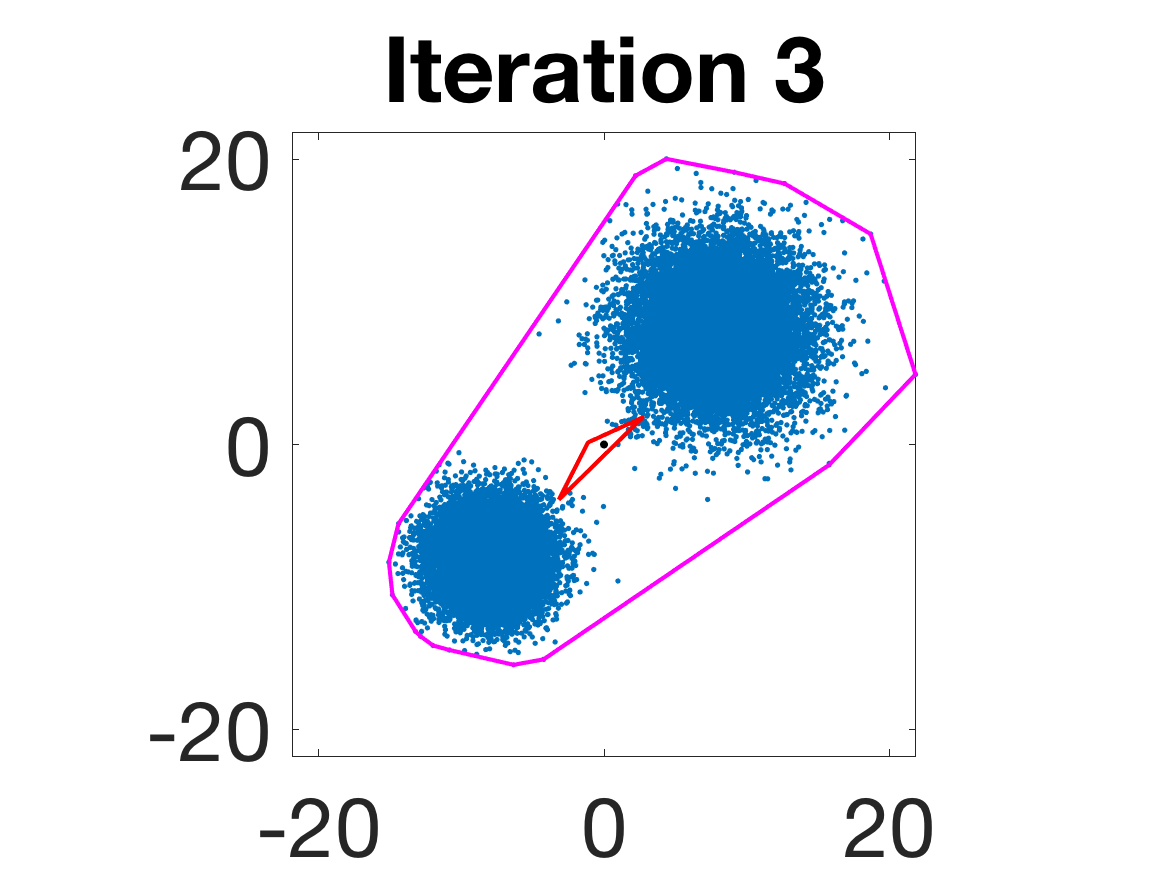}
\includegraphics[width = 0.12\textwidth,clip,trim=2.5cm 0cm 3.5cm 0cm]{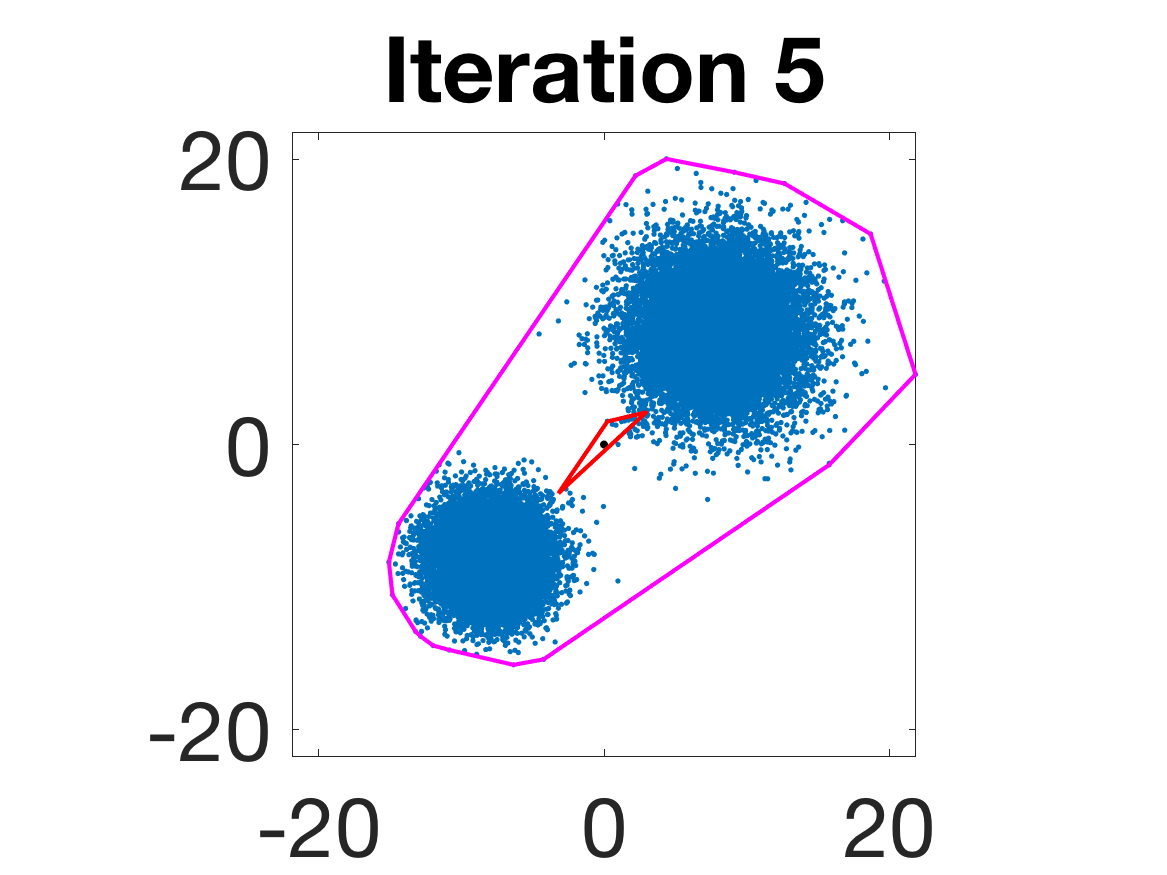}
\includegraphics[width = 0.12\textwidth,clip,trim=2.5cm 0cm 3.5cm 0cm]{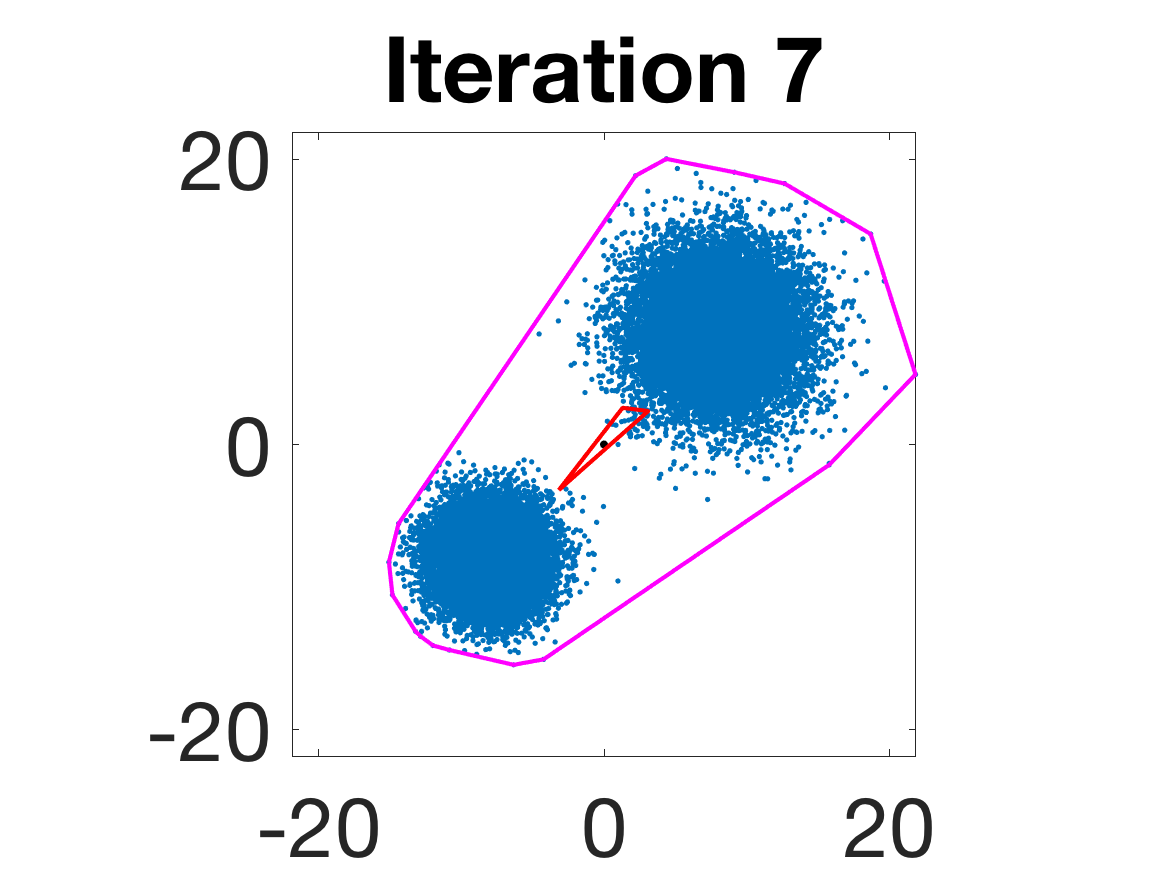}
\includegraphics[width = 0.12\textwidth,clip,trim=2.5cm 0cm 3.5cm 0cm]{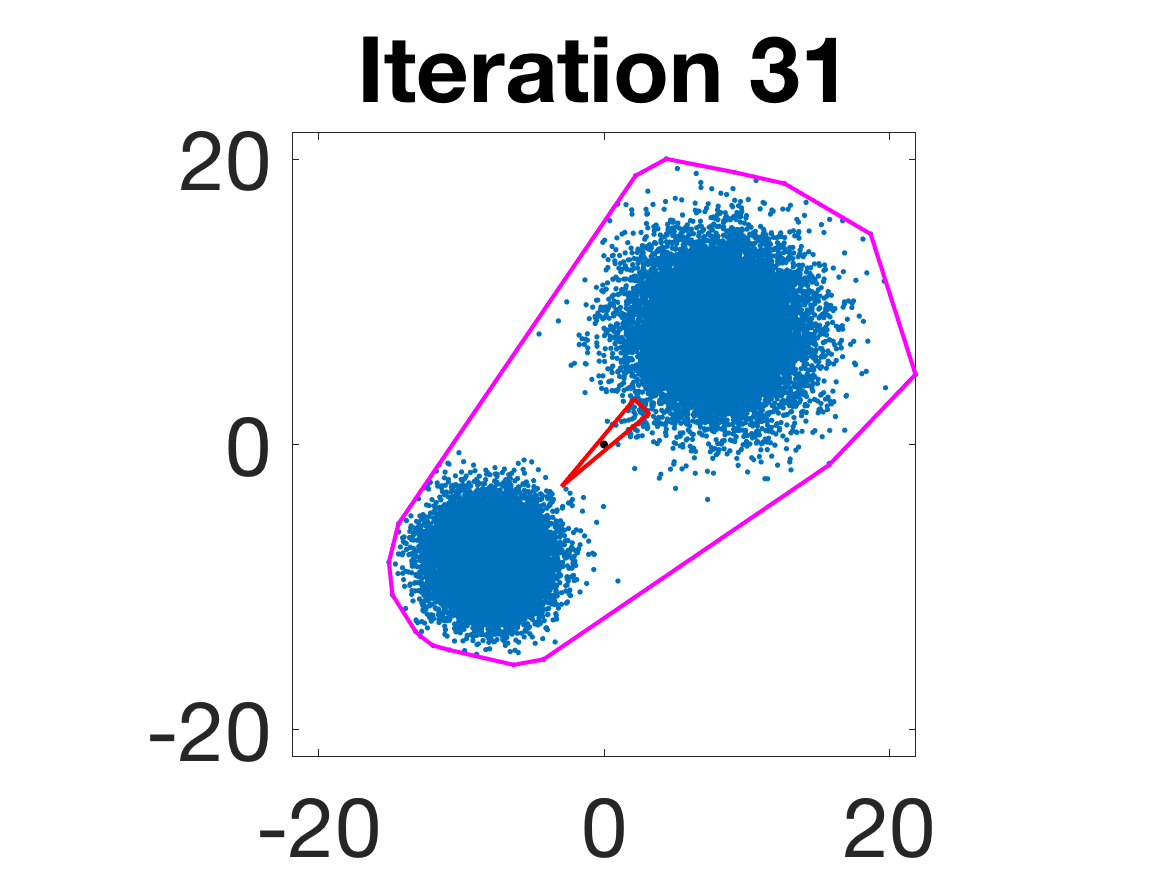}\\
\includegraphics[width = 0.12\textwidth,clip,trim=2.5cm 0cm 3.5cm 0cm]{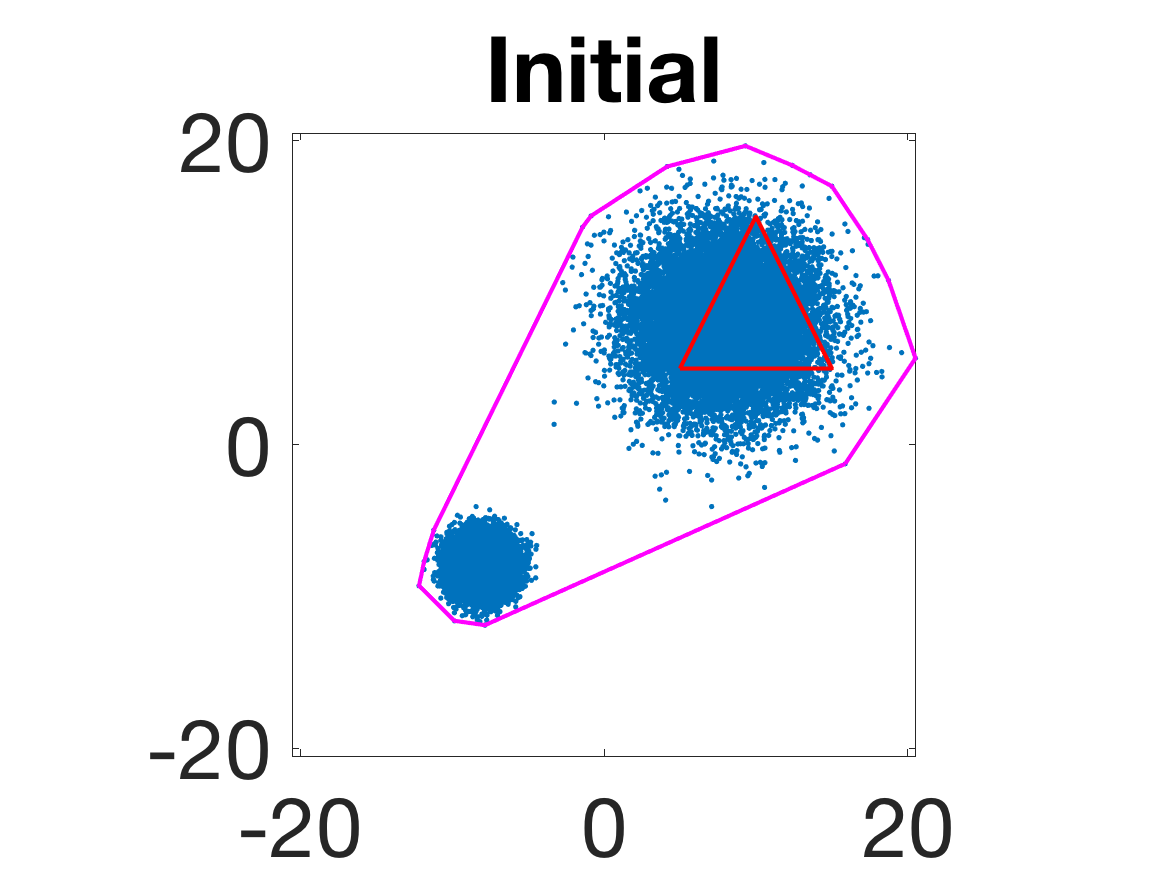}
\includegraphics[width = 0.12\textwidth,clip,trim=2.5cm 0cm 3.5cm 0cm]{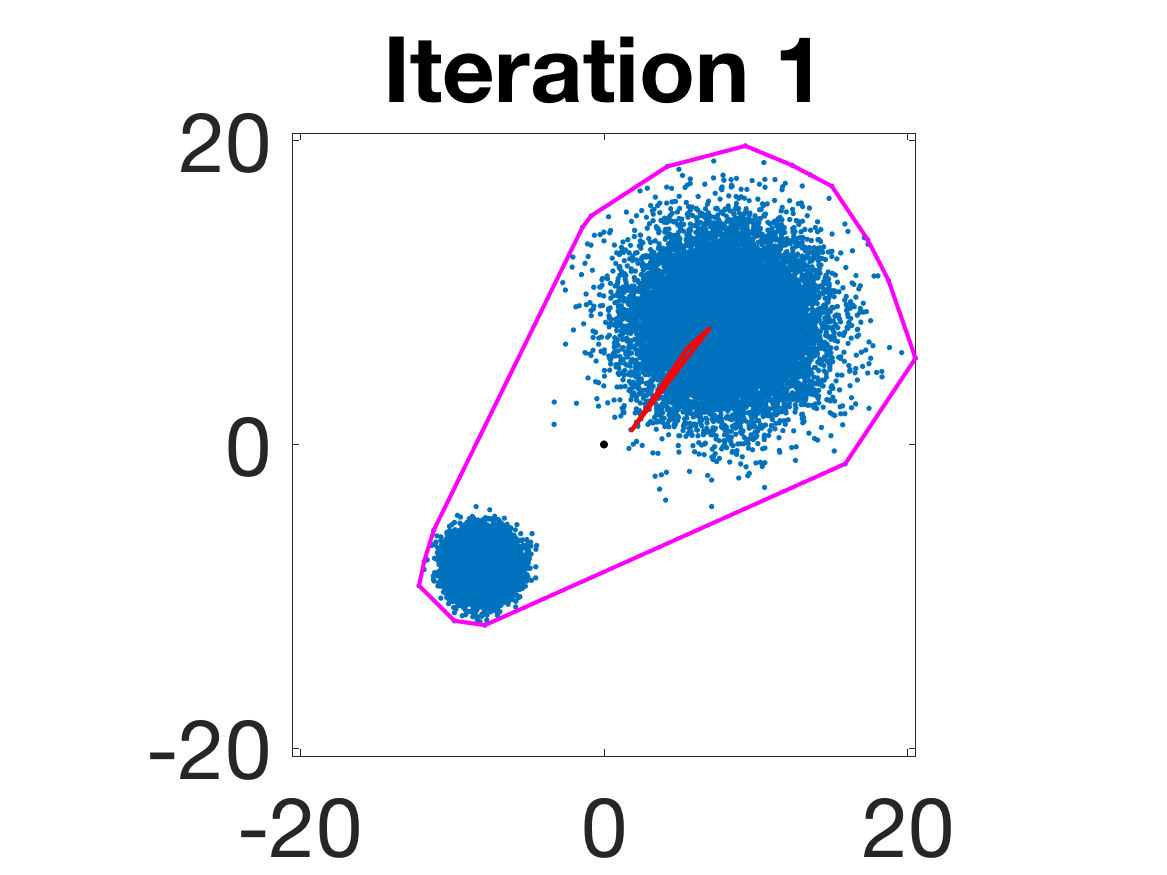}
\includegraphics[width = 0.12\textwidth,clip,trim=2.5cm 0cm 3.5cm 0cm]{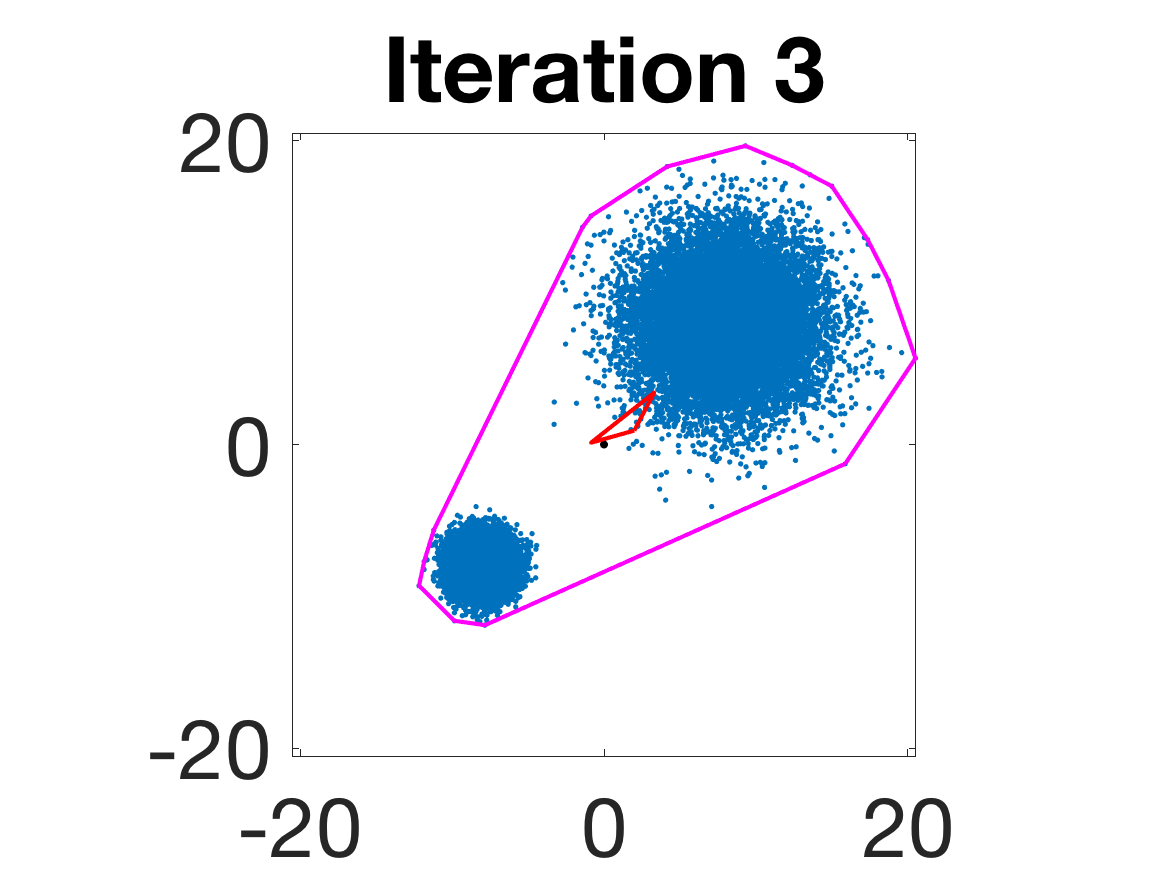}
\includegraphics[width = 0.12\textwidth,clip,trim=2.5cm 0cm 3.5cm 0cm]{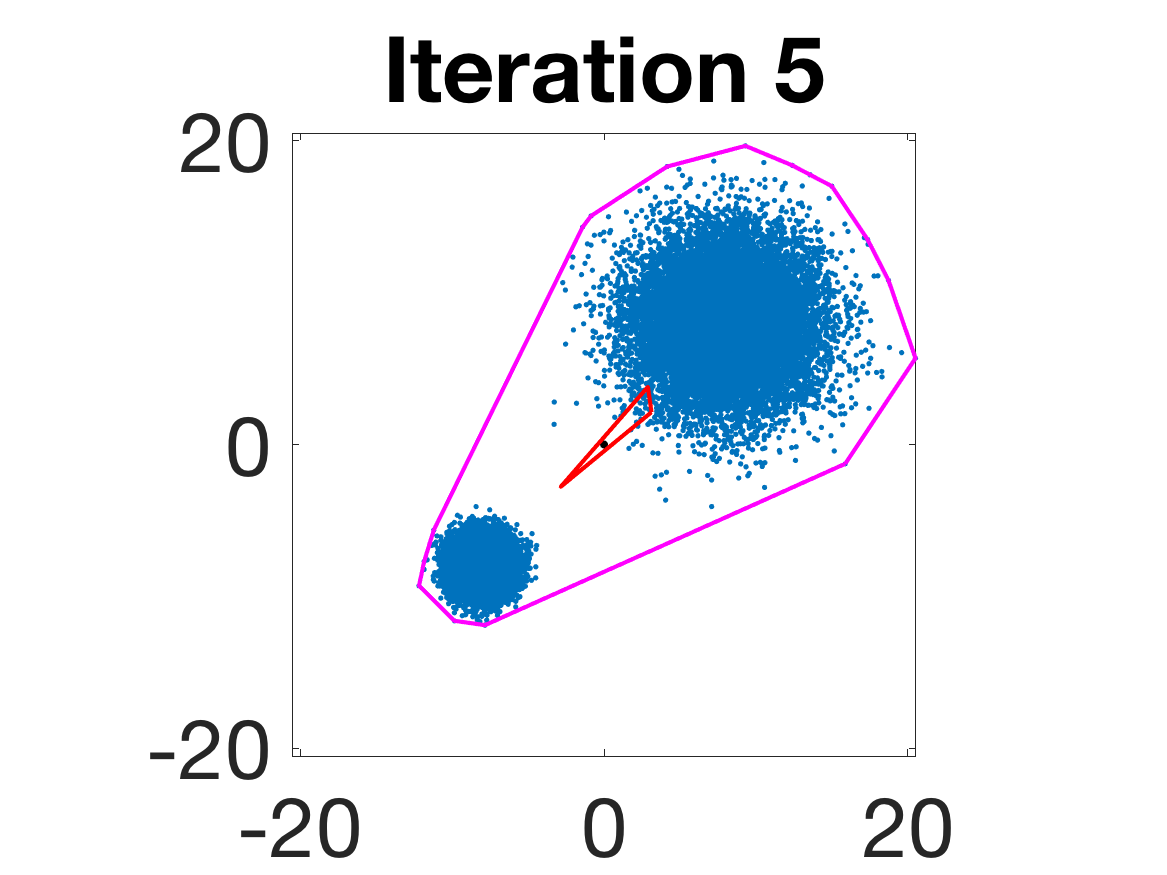}
\includegraphics[width = 0.12\textwidth,clip,trim=2.5cm 0cm 3.5cm 0cm]{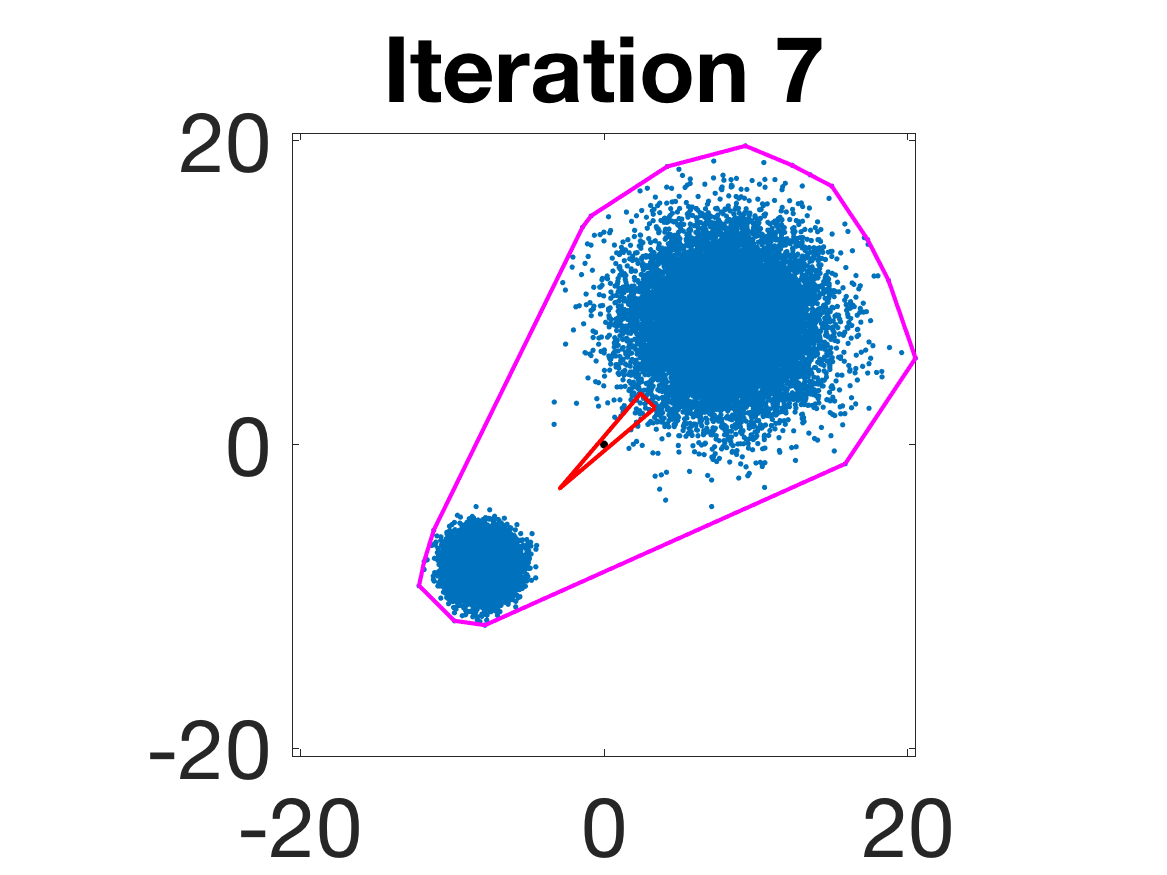}
\includegraphics[width = 0.12\textwidth,clip,trim=2.5cm 0cm 3.5cm 0cm]{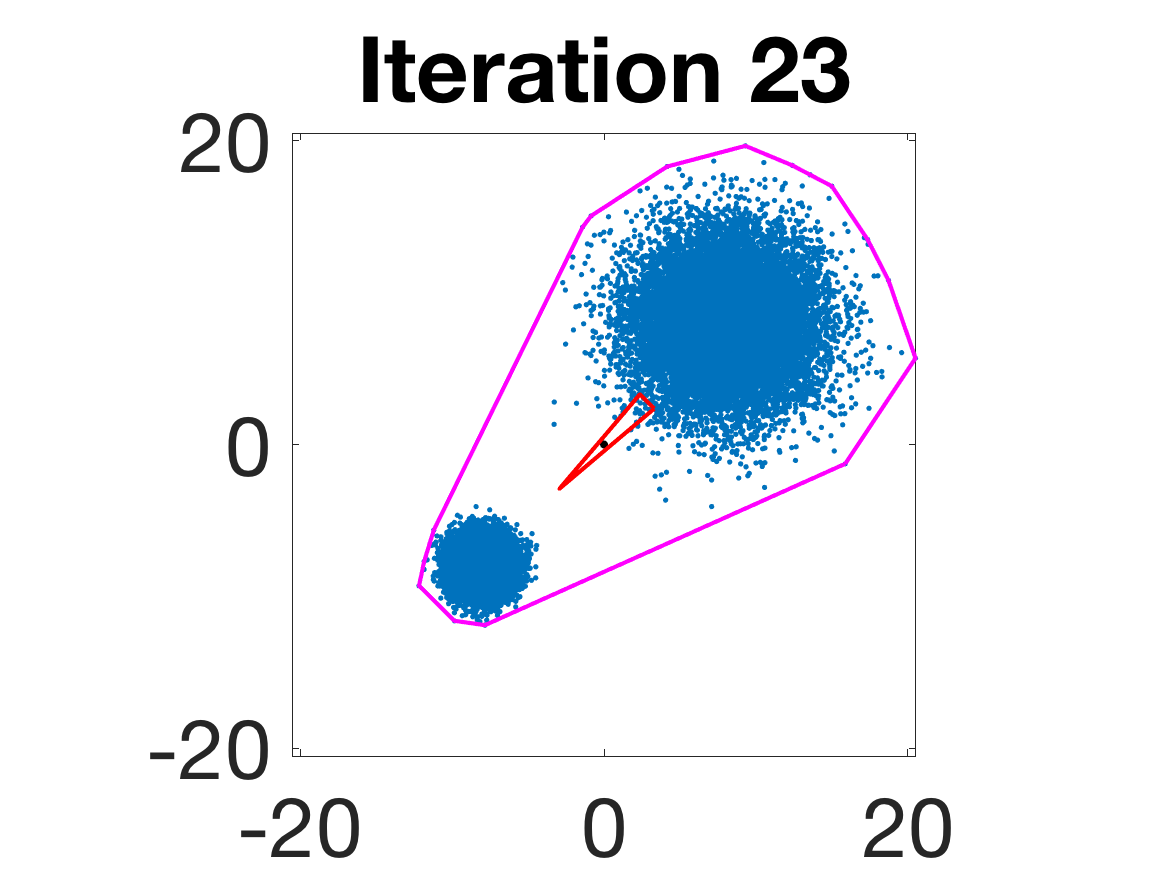}\\
\includegraphics[width = 0.12\textwidth,clip,trim=2.5cm 0cm 3.5cm 0cm]{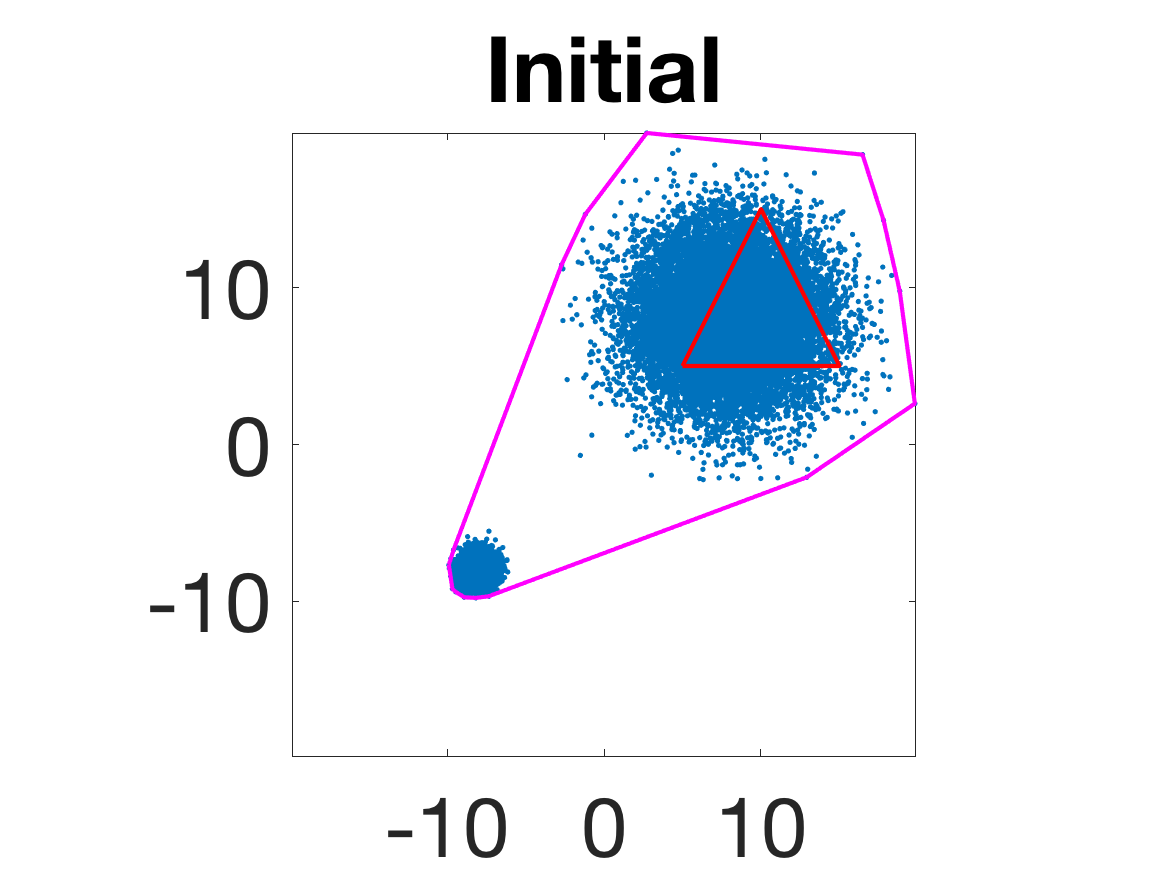}
\includegraphics[width = 0.12\textwidth,clip,trim=2.5cm 0cm 3.5cm 0cm]{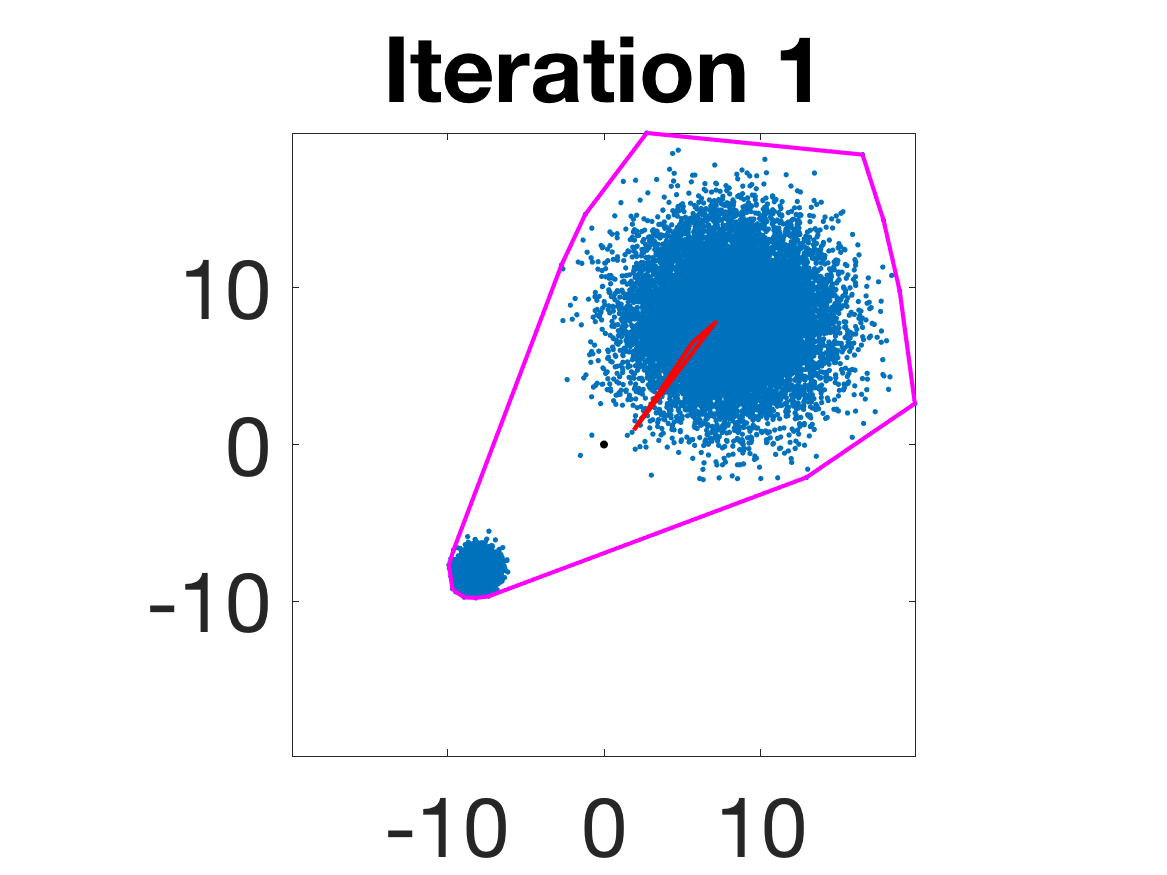}
\includegraphics[width = 0.12\textwidth,clip,trim=2.5cm 0cm 3.5cm 0cm]{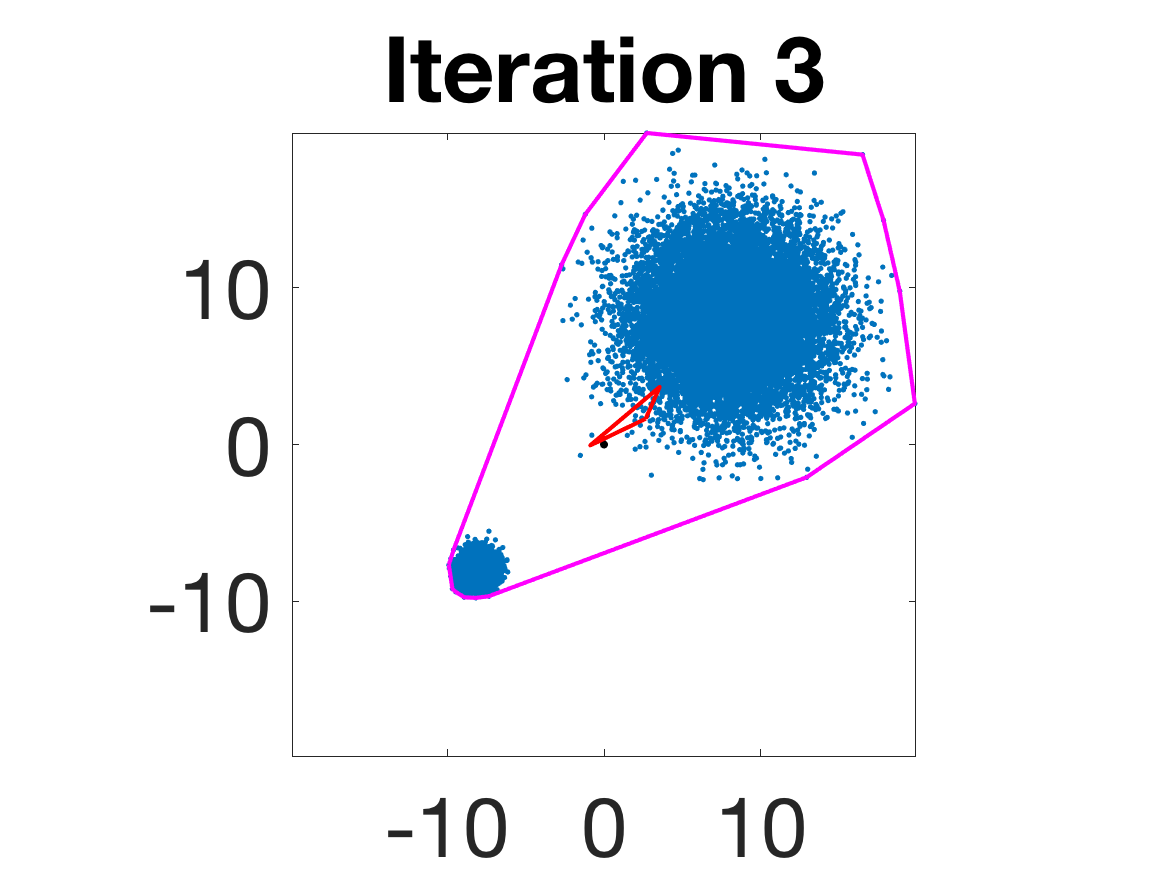}
\includegraphics[width = 0.12\textwidth,clip,trim=2.5cm 0cm 3.5cm 0cm]{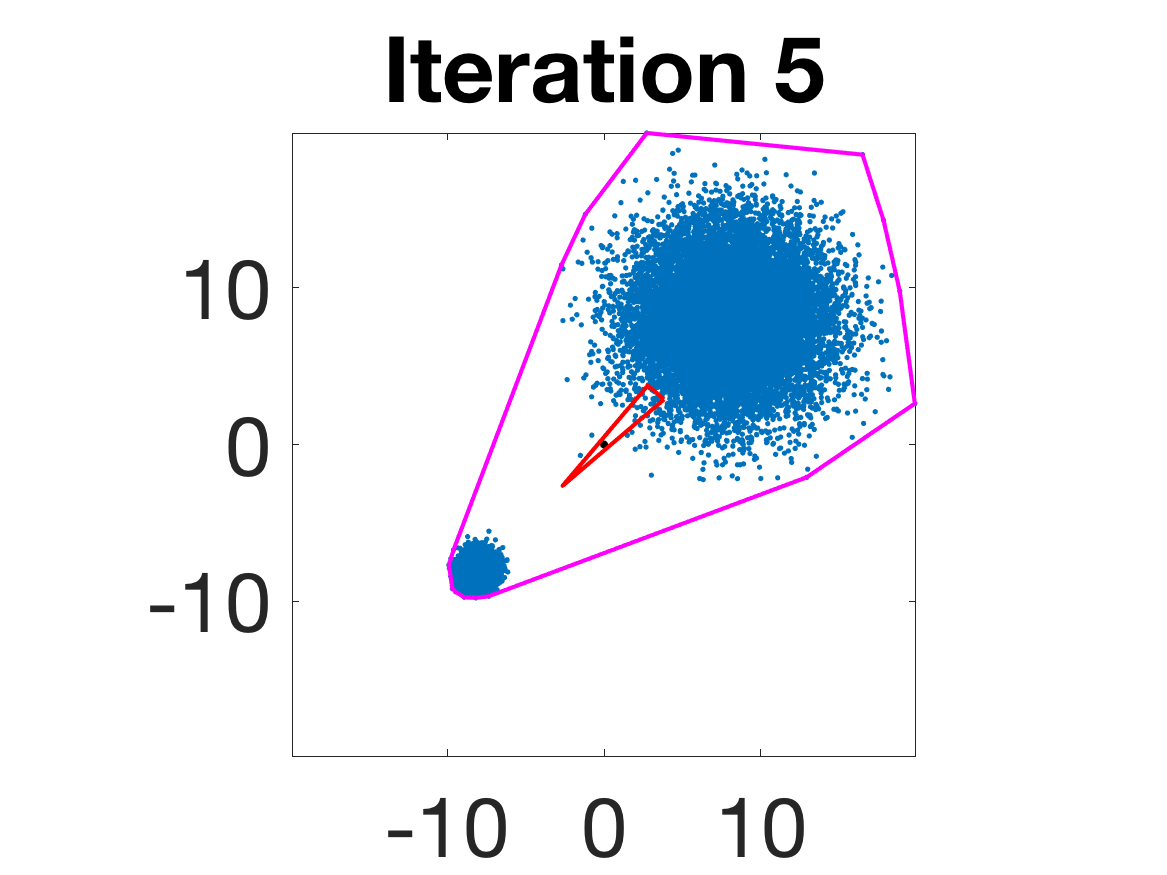}
\includegraphics[width = 0.12\textwidth,clip,trim=2.5cm 0cm 3.5cm 0cm]{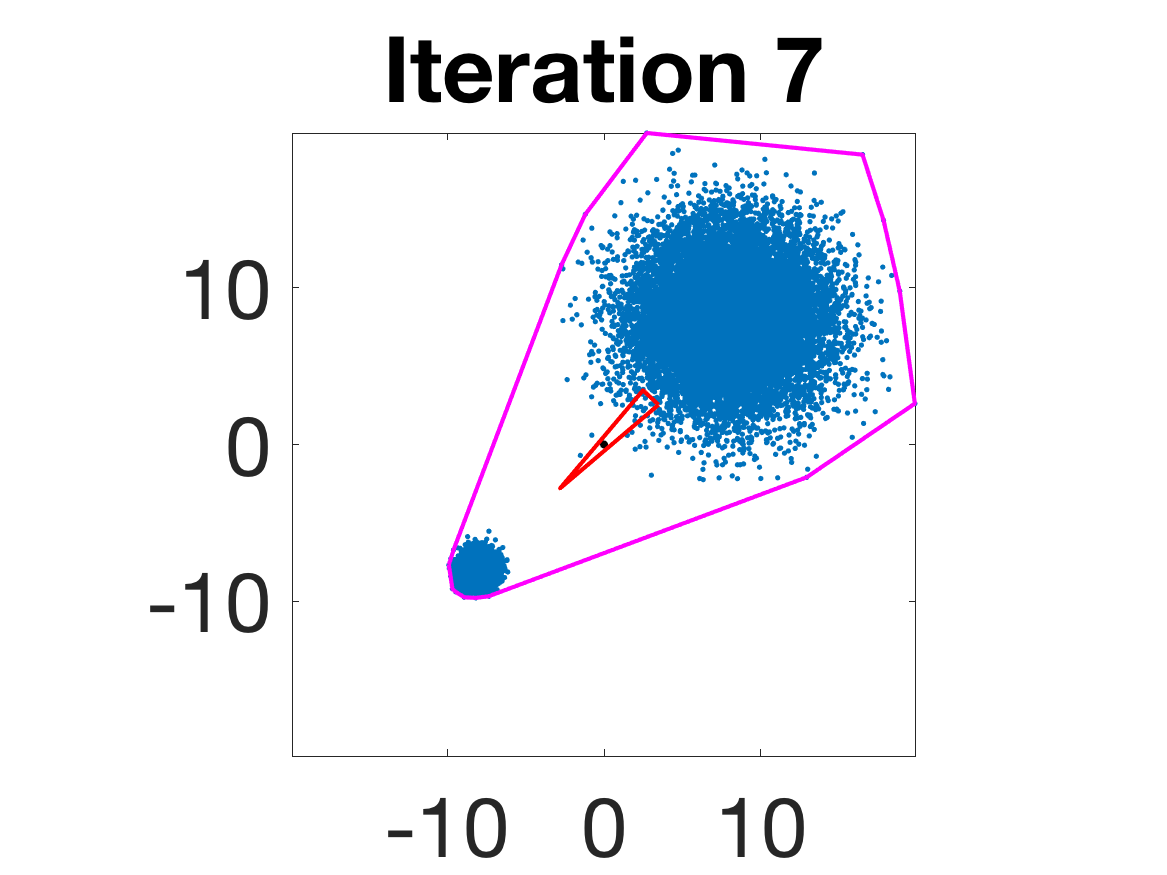}
\includegraphics[width = 0.12\textwidth,clip,trim=2.5cm 0cm 3.5cm 0cm]{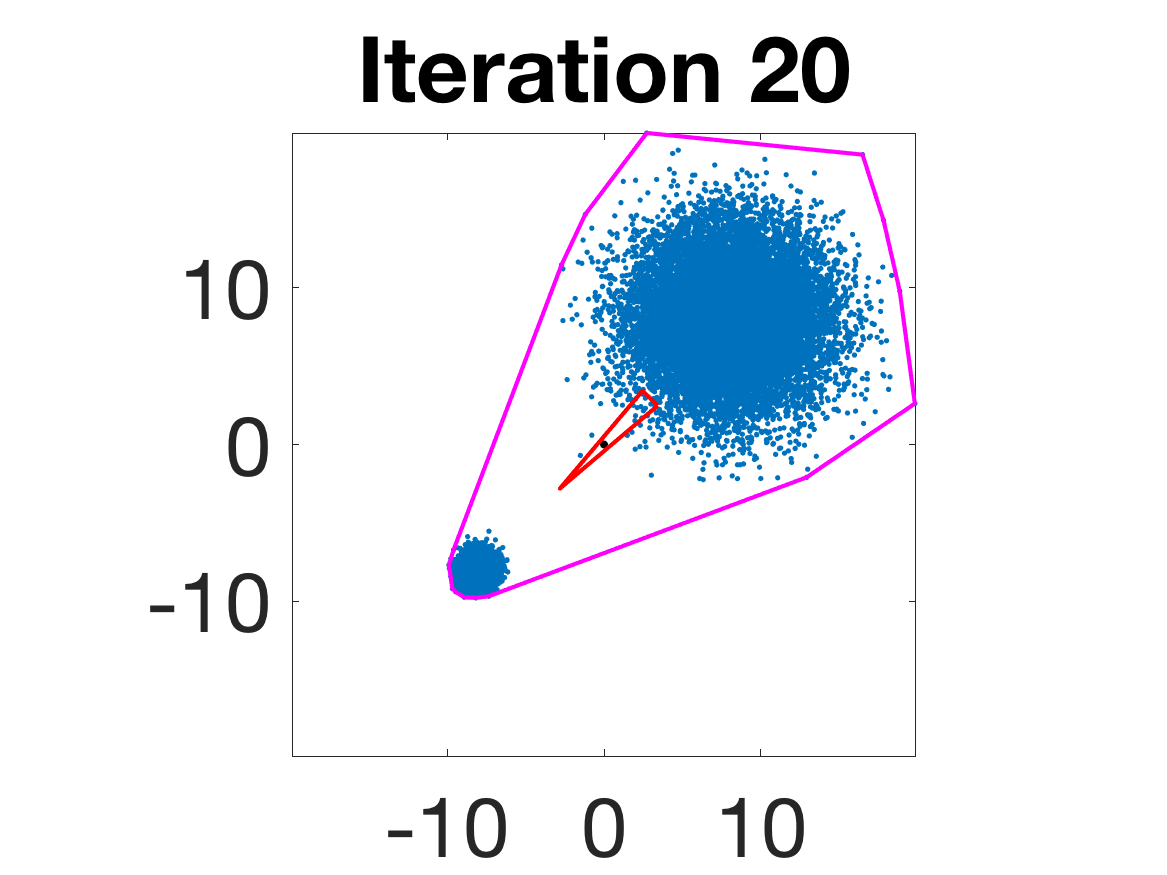}
\caption{Snapshots of the iterations for different initializations and different sampled data from a Gaussian mixture model. The last column is the final iterate. See \cref{sec:unbounded3}.} \label{fig:bimodal4}
\end{figure}

\section{Discussion} \label{s:Disc}
In this paper, we studied the archetypal analysis problem \eqref{e:arch}. 
We proved the consistency result that shows if the data are independently sampled from a probability measure  with bounded support, then the archetype points converge to the solution of the continuum version of the problem,  \eqref{e:arch2g}; see \cref{t:bdd consistency}.
If the points are independently sampled from a distribution with unbounded support, we also prove a consistency result for a modified method  \eqref{e:arch3} that penalizes the dispersion of the archetype points; see \cref{t:ConsVarReg}. 
Our analysis is supported by detailed computational experiments of the archetype points for data sampled from the uniform distribution in a disk, the normal distribution, an annular distribution, and a bimodal distribution.

To handle distributions with unbounded support, we introduced a modified method \eqref{e:arch3} that penalizes the dispersion of the archetype points. There are several other potential modified methods that we could have used. 
(i) One choice would be the boundary of the convex set of the archetype points, $\partial (\co(A))$ rather than $\co(A)$ in the definition of $F_\nu(A)$, but this is a more difficult cost to compute. 
(ii) Interpreting extremal data as outliers, we could have followed the suggestion of \cite{Cutler_1994} and used convex peeling to discard a small fraction of the data. This is equivalent to sampling from a conditional distribution. 
Finally, we could have changed the measure of distance in \eqref{e:arch2g} to, for example, use the $2$-Wasserstein metric $W_2$ and consider the problem of minimizing $W_2(\mu, \xi)$, where $ \xi = \frac{1}{\vol( \co(A) )} \chi_{\co(A)} $.
We intend to pursue this direction in subsequent work.

In this work, we used a projected gradient descent method to approximately solve \eqref{cls}; see \cref{alg1}. 
For a more scalable implementation, \cref{alg1} could be combined with sampling techniques to accelerate the convergence of the iterations. 
A subsampling approach would be to apply \cref{alg1} to a relatively small subsample of the dataset to obtain  an approximate solution and then iteratively use this as an initial guess for increasingly larger (nested) subsamples of the dataset. 
An adaptive sampling technique for distributions with compact support would involve `ignoring' samples of the dataset that lie far from $\partial \left( \co( \supp( \mu)) \right)$ and `focusing' on samples that lie near  $\partial \left( \co( \supp( \mu)) \right)$. 

\appendix

\section{Proof of the claim in \cref{m-variance}}\label{appendix2}
We prove that for every $\mu\in\mathcal V$ (which is defined in \eqref{VV}), $\supp(\mu)$ is affinely independent. 
First note that existence of $\mu$ is guaranteed by Prokhorov's theorem. 
For $a\in \co(D)$, define $\mathscr M_a=\{\nu\in\mathscr M \colon \E_\nu[x]=a\}$, so that
\begin{align*}
\mu\in\arg\max_{\nu\in\mathscr M_{\E_\mu[x]}}\E_\nu\left[ \|x-\E_\mu[x]\|_2^2 \right]=\arg\max_{\nu\in\mathscr M_{\E_\mu[x]}}\E_\nu\left[ \|x\|_2^2 \right].
\end{align*}
Note that $\mathscr M_{\E_\mu[x]}$ is a convex compact set, and $l(\nu):=\E_\nu\left[ \|x\|_2^2 \right] $ is a non-zero bounded linear functional on $\mathscr M_{\E_\mu[x]}$. The maximum of $l$ must be obtained on the extreme set of $\mathscr M_{\E_\mu[x]}$. 
A result in \cite[Theorem 2.1]{Karr_1983} implies that the extreme set of $\mathscr M_{\E_\mu[x]}$ consists of probability measures in $\mathscr M$ whose support is affinely independent, particularly, $\supp(\mu)\leq d+1$. 

\section{Derivation of problem \eqref{prob:updatez1} and \eqref{prob:updatez3}}\label{appendix}
In the follows, we include the derivation of problem \eqref{prob:updatez3}. \eqref{prob:updatez1} can be reduced from \eqref{prob:updatez3} by setting $\alpha = 0$. From \eqref{prob:updatez2}, assume we are updating the $\ell$-th column of $\mathcal Z$ (\ie, $a_\ell$) with other columns being fixed, direct calculation yields
\begin{align*}
& \frac{1}{N} \| X -  \mathcal Z  \mathcal B \|^2_F+\frac{\alpha}{k} \sum_{q=1}^k\left\|a_q - \frac{1}{k}\sum_{s=1}^k a_s \right\|_2^2 \\
=& \frac{1}{N} \sum_{j=1}^d\sum_{i=1}^N \left(x_{ji}^2 -2x_{ji}\sum_{s=1}^k \mathcal Z_{js} \mathcal B_{si} +\left( \sum_{s=1}^k \mathcal Z_{js} \mathcal B_{si} \right)^2\right) +\frac{\alpha}{k} \sum_{q=1}^k\left\|a_q- \frac{1}{k}\sum_{s=1}^k a_s\right\|_2^2 \\
=& \frac{1}{N} \sum_{j=1}^d\sum_{i=1}^N \left(-2x_{ji} \mathcal Z_{j\ell} \mathcal B_{\ell i} +\left(\mathcal Z_{j\ell} \mathcal B_{\ell i} \right)^2 + 2\mathcal Z_{j\ell} \mathcal B_{\ell i} \sum_{s\neq \ell}^k \mathcal Z_{js} \mathcal B_{s i}\right) +  \frac{\alpha(k-1)}{k^2}\|a_\ell\|_2^2 \\
&- 2\frac{\alpha}{k}\left\langle a_\ell, \frac{k-1}{k^2}\sum_{s\neq\ell}^k a_s+\frac{1}{k}\sum_{q \neq \ell}^k\left(a_q - \frac{1}{k}\sum_{s\neq\ell}^k a_s\right) \right\rangle + \mathcal O \\
=& \left(\frac{1}{N} \sum_{i=1}^N \mathcal B_{\ell i}^2+\frac{\alpha(k-1)}{k^2} \right)\|a_\ell\|_2^2 - 2\left\langle a_\ell, \frac{1}{N} \sum_{i=1}^N \left[\mathcal B_{\ell i}\left( x_i - \sum_{s\neq \ell}^k a_s\mathcal B_{s i}  \right)\right]+\frac{\alpha}{k^2} \sum_{s\neq \ell}^k a_s\right\rangle + \mathcal O \\
\end{align*}
where $\mathcal O$ contains all terms independent of $a_\ell$. Completing the squares for $a_\ell$  gives the desired formula.

\section*{Acknowledgments}
We would like to thank the anonymous referees for their very helpful comments and especially for pointing us towards \cite{Brunel_2019}, which we used in the proof of \cref{t:rate}.

\bibliographystyle{siamplain}
\bibliography{refs}

\begin{thebibliography}{10}

\bibitem{Bauckhage_2009}
{\sc C.~Bauckhage and C.~Thurau}, {\em Making archetypal analysis practical},
  in Lecture Notes in Computer Science, Springer Berlin Heidelberg, 2009,
  pp.~272--281, \url{https://doi.org/10.1007/978-3-642-03798-6_28}.

\bibitem{BelkinUniform}
{\sc M.~Belkin and P.~Niyogi}, {\em Towards a theoretical foundation for
  {L}aplacian-based manifold methods}, in International Conference on
  Computational Learning Theory, Springer, 2005, pp.~486--500,
  \url{https://doi.org/10.1007/11503415_33}.

\bibitem{Bousquet}
{\sc O.~Bousquet, O.~Chapelle, and M.~Hein}, {\em Measure based
  regularization}, in Advances in Neural Information Processing Systems, 2004,
  pp.~1221--1228.

\bibitem{Brunel_2019}
{\sc V.-E. Brunel}, {\em Uniform behaviors of random polytopes under the
  {H}ausdorff metric}, Bernoulli, 25 (2019), pp.~1770--1793,
  \url{https://doi.org/10.3150/18-bej1035}.

\bibitem{calder2019improved}
{\sc J.~Calder and N.~Garc\'ia~Trillos}, {\em Improved spectral convergence
  rates for graph {Laplacians} on $\varepsilon$-graphs and k-{NN} graphs},
  arXiv:1910.13476,  (2019).

\bibitem{Canhasi_2014}
{\sc E.~Canhasi and I.~Kononenko}, {\em Weighted archetypal analysis of the
  multi-element graph for query-focused multi-document summarization}, Expert
  Systems with Applications, 41 (2014), pp.~535--543,
  \url{https://doi.org/10.1016/j.eswa.2013.07.079}.

\bibitem{Chan_2003}
{\sc B.~H.~P. Chan, D.~A. Mitchell, and L.~E. Cram}, {\em Archetypal analysis
  of galaxy spectra}, Monthly Notices of the Royal Astronomical Society, 338
  (2003), pp.~790--795, \url{https://doi.org/10.1046/j.1365-8711.2003.06099.x}.

\bibitem{Chen_2014}
{\sc Y.~Chen, J.~Mairal, and Z.~Harchaoui}, {\em Fast and robust archetypal
  analysis for representation learning}, in 2014 {IEEE} Conference on Computer
  Vision and Pattern Recognition, {IEEE}, jun 2014,
  \url{https://doi.org/10.1109/cvpr.2014.192}.

\bibitem{Cutler_1994}
{\sc A.~Cutler and L.~Breiman}, {\em Archetypal analysis}, Technometrics, 36
  (1994), p.~338, \url{https://doi.org/10.2307/1269949}.

\bibitem{Duchi_2008}
{\sc J.~Duchi, S.~Shalev-Shwartz, Y.~Singer, and T.~Chandra}, {\em Efficient
  projections onto the $\ell_1$-ball for learning in high dimensions}, in
  Proceedings of the 25th international conference on Machine learning - {ICML}
  08, {ACM} Press, 2008, \url{https://doi.org/10.1145/1390156.1390191}.

\bibitem{D_mbgen_1996}
{\sc L.~D{\"u}mbgen and G.~Walther}, {\em Rates of convergence for random
  approximations of convex sets}, Advances in Applied Probability, 28 (1996),
  pp.~384--393, \url{https://doi.org/10.2307/1428063},
  \url{https://doi.org/10.2307%2F1428063}.

\bibitem{trillos2018variational}
{\sc N.~Garc\'{i}a~Trillos and D.~Slep{\v{c}}ev}, {\em A variational approach
  to the consistency of spectral clustering}, Applied and Computational
  Harmonic Analysis, 45 (2018), pp.~239--281,
  \url{https://doi.org/10.1016/j.acha.2016.09.003}.

\bibitem{trillos2016consistency}
{\sc N.~Garc\'{i}a~Trillos, D.~Slep{\v{c}}ev, J.~Von~Brecht, T.~Laurent, and
  X.~Bresson}, {\em Consistency of {C}heeger and ratio graph cuts}, The Journal
  of Machine Learning Research, 17 (2016), pp.~6268--6313.

\bibitem{Hartigan_1981}
{\sc J.~A. Hartigan}, {\em Consistency of single linkage for high-density
  clusters}, Journal of the American Statistical Association, 76 (1981),
  pp.~388--394, \url{https://doi.org/10.1080/01621459.1981.10477658}.

\bibitem{HeinMore}
{\sc M.~Hein, J.-Y. Audibert, and U.~v. Luxburg}, {\em Graph {L}aplacians and
  their convergence on random neighborhood graphs}, Journal of Machine Learning
  Research, 8 (2007), pp.~1325--1368.

\bibitem{Hein}
{\sc M.~Hein, J.-Y. Audibert, and U.~Von~Luxburg}, {\em From graphs to
  manifolds--weak and strong pointwise consistency of graph {L}aplacians}, in
  International Conference on Computational Learning Theory, Springer, 2005,
  pp.~470--485, \url{https://doi.org/10.1007/11503415_32}.

\bibitem{hoeffding1994probability}
{\sc W.~Hoeffding}, {\em Probability inequalities for sums of bounded random
  variables}, in The Collected Works of Wassily Hoeffding, Springer, 1994,
  pp.~409--426.

\bibitem{Karr_1983}
{\sc A.~F. Karr}, {\em Extreme points of certain sets of probability measures,
  with applications}, Mathematics of Operations Research, 8 (1983), pp.~74--85,
  \url{https://doi.org/10.1287/moor.8.1.74},
  \url{https://doi.org/10.1287%2Fmoor.8.1.74}.

\bibitem{keller2020learning}
{\sc S.~M. Keller, M.~Samarin, F.~A. Torres, M.~Wieser, and V.~Roth}, {\em
  Learning extremal representations with deep archetypal analysis}, arXiv
  preprint arXiv:2002.00815,  (2020).

\bibitem{Keller_2019}
{\sc S.~M. Keller, M.~Samarin, M.~Wieser, and V.~Roth}, {\em Deep archetypal
  analysis}, in Lecture Notes in Computer Science, Springer International
  Publishing, 2019, pp.~171--185,
  \url{https://doi.org/10.1007/978-3-030-33676-9_12}.

\bibitem{LafonThesis}
{\sc S.~S. Lafon}, {\em Diffusion maps and geometric harmonics}, PhD thesis,
  Yale University PhD dissertation, 2004.

\bibitem{mair2019coresets}
{\sc S.~Mair and U.~Brefeld}, {\em Coresets for archetypal analysis}, in
  Advances in Neural Information Processing Systems, 2019, pp.~7245--7253.

\bibitem{M_rup_2012}
{\sc M.~M{\o}rup and L.~K. Hansen}, {\em Archetypal analysis for machine
  learning and data mining}, Neurocomputing, 80 (2012), pp.~54--63,
  \url{https://doi.org/10.1016/j.neucom.2011.06.033}.

\bibitem{Reeb2016}
{\sc B.~Osting and T.~H. Reeb}, {\em Consistency of {D}irichlet partitions},
  SIAM Journal on Mathematical Analysis, 49 (2017), pp.~4251--4274,
  \url{https://doi.org/10.1137/16M1098309}.

\bibitem{Pollard_1981}
{\sc D.~Pollard}, {\em Strong consistency of $k$-means clustering}, The Annals
  of Statistics, 9 (1981), pp.~135--140,
  \url{https://doi.org/10.1214/aos/1176345339}.

\bibitem{pollard1982central}
{\sc D.~Pollard et~al.}, {\em A central limit theorem for $ k $-means
  clustering}, The Annals of Probability, 10 (1982), pp.~919--926,
  \url{https://doi.org/10.1214/aop/1176993713}.

\bibitem{Seth_2015}
{\sc S.~Seth and M.~J.~A. Eugster}, {\em Probabilistic archetypal analysis},
  Machine Learning, 102 (2015), pp.~85--113,
  \url{https://doi.org/10.1007/s10994-015-5498-8}.

\bibitem{Singer}
{\sc A.~Singer}, {\em From graph to manifold {L}aplacian: The convergence
  rate}, Applied and Computational Harmonic Analysis, 21 (2006), pp.~128--134,
  \url{https://doi.org/10.1016/j.acha.2006.03.004}.

\bibitem{Stone_1996}
{\sc E.~Stone and A.~Cutler}, {\em Archetypal analysis of spatio-temporal
  dynamics}, Physica D: Nonlinear Phenomena, 90 (1996), pp.~209--224,
  \url{https://doi.org/10.1016/0167-2789(95)00244-8}.

\bibitem{Sun_2012}
{\sc W.~Sun, J.~Wang, and Y.~Fang}, {\em Regularized k-means clustering of
  high-dimensional data and its asymptotic consistency}, Electronic Journal of
  Statistics, 6 (2012), pp.~148--167, \url{https://doi.org/10.1214/12-ejs668}.

\bibitem{Dijk2019}
{\sc D.~van Dijk, D.~B. Burkhardt, M.~Amodio, A.~Tong, G.~Wolf, and
  S.~Krishnaswamy}, {\em Finding archetypal spaces for data using neural
  networks}, CoRR,  (2019).

\bibitem{Vershynin_2018}
{\sc R.~Vershynin}, {\em High-Dimensional Probability}, Cambridge University
  Press, sep 2018, \url{https://doi.org/10.1017/9781108231596},
  \url{https://doi.org/10.1017%2F9781108231596}.

\bibitem{von_Luxburg_2008}
{\sc U.~von Luxburg, M.~Belkin, and O.~Bousquet}, {\em Consistency of spectral
  clustering}, The Annals of Statistics, 36 (2008), pp.~555--586,
  \url{https://doi.org/10.1214/009053607000000640}.

\bibitem{wu2017prototypal}
{\sc C.~Wu and E.~G. Tabak}, {\em Prototypal analysis and prototypal
  regression}, arXiv preprint arXiv:1701.08916,  (2017).

\bibitem{Yuan2020}
{\sc A.~Yuan, J.~Calder, and B.~Osting}, {\em A continuum limit for the
  pagerank algorithm}, arXiv:2001.08973,  (2020).

\end{thebibliography}

\end{document}